\def\VR{\kern-\arraycolsep\strut\vrule &\kern-\arraycolsep}
\def\vr{\kern-\arraycolsep & \kern-\arraycolsep}
\newcommand{\dv}{\ensuremath{\underline{\dim}\, }}
\newcommand{\be}{\begin{enumerate}}
\newtheorem{theorem}{Theorem}
\newtheorem{lemma}[theorem]{Lemma}
\newtheorem{prop}[theorem]{Proposition}
\newtheorem{corollary}[theorem]{Corollary}
\theoremstyle{definition}
\newtheorem{rmk}{Remark}
\newenvironment{remark}[1][]{\begin{rmk}[#1]\pushQED{\qed}}{\popQED \end{rmk}}
\newtheorem{obs}{Observation}
\newtheorem{ex}{Example}
\newcommand{\Hom}{\operatorname{Hom}}
\newcommand{\End}{\operatorname{End}}
\newcommand{\Ext}{\operatorname{Ext}}
\newcommand{\rep}{\operatorname{rep}}
\newcommand{\ZZ}{\mathbb Z}
\newcommand{\QQ}{\mathbb Q}
\newcommand{\rel}{\operatorname{relint}}
 \newcommand{\ee}{\operatorname{\mathbf{e}}}
\newcommand{\C}{\mathcal{C}}
\newcommand{\D}{\mathcal{D}}
\newcommand{\I}{\mathcal{I}}
\newcommand{\wt}{\operatorname{wt}}
\newcommand{\module}{\operatorname{mod}}
\newcommand{\git}{\operatorname{GIT}}
\newcommand{\rat}{\mathbb{Q}}
\newcommand{\nat}{\mathbb{N}}
\newcommand{\inte}{\mathbb{Z}}
\newcommand{\into}{\hookrightarrow} 
\DeclareMathOperator{\relint}{relint}
\DeclareMathOperator{\Du}{D}
\newcommand{\<}{\langle} 
\renewcommand{\>}{\rangle} 
\theoremstyle{plain}
\newtheorem*{thm*}{Theorem}
\theoremstyle{definition}
\theoremstyle{remark}
\newtheorem*{re}{Remark}
\begin{document}
\title{GIT-equivalence and Semi-Stable Subcategories of Quiver Representations}

\author{Calin Chindris}
\address{University of Missouri-Columbia, Mathematics Department, Columbia, MO, USA}
\email[Calin Chindris]{chindrisc@missouri.edu}

\author{Valerie Granger}
\address{University of Missouri-Columbia, Mathematics Department, Columbia, MO, USA}
\email[Valerie Granger]{vrhmq7@mail.missouri.edu}

\date{\today}
\bibliographystyle{plain}
\subjclass[2000]{16G20, 13A50}
\keywords{semi-stable quiver representations, GIT-cones, Schur roots, tame quivers}

\begin{abstract} In this paper, we answer the question of when the subcategory of semi-stable representations is the same for two rational vectors for an acyclic quiver. This question has been previously answered by Ingalls, Paquette, and Thomas in the tame case in \cite{IngPacTho}. Here we take a more invariant theoretic approach, to answer this question in general. We recover the known result in the tame case. 
\end{abstract}

\maketitle
\setcounter{tocdepth}{1}
\tableofcontents

\section{Introduction}
Throughout, $K$ denotes an algebraically closed field of characteristic zero. By a quiver, we mean a finite, connected, acyclic quiver. All representations and modules are assumed to be finite-dimensional. By a module, we always mean a left module.

Our goal in this paper is to determine when two rational vectors of a quiver $Q$ share the same subcategory of semi-stable representations. This question has been answered by Ingalls, Paquette, and Thomas in the tame case in \cite{IngPacTho}. Here we take a representation-type free approach, to answer this question in general. We recover the known result in the tame case. We point out that the  study of the possible interactions between semi-stable subcategories is important for applications to cluster algebras and Artin groups  (see \cite{Brady, BradyWatt}, \cite{IngTho}).

Let $Q$ be a quiver with vertex set $Q_0$
and consider the isomorphism of vector spaces:
\begin{align*}
\wt :\QQ^{Q_0}&\to \QQ^{Q_0}\\
\alpha& \to (\langle \alpha, \ee_x \rangle)_{x \in Q_0}, 
\end{align*}
where the $\ee_x$, $x \in Q_0$, are the standard unit vectors of $\QQ^{Q_0}$ and $\langle \cdot, \cdot \rangle$ is the Euler inner product of $Q$. 
Two rational vectors $\alpha_1$ and $\alpha_2 \in \QQ^{Q_0}$ are said to be {\bf$\git$-equivalent} if they share the same subcategory of semi-stable representations, i.e., if: 
$$\rep(Q)_{\wt(\alpha_1)}^{ss} = \rep(Q)_{\wt(\alpha_2)}^{ss}.$$ 
 
The {\bf $\git$-cone} associated to a dimension vector $\beta \in \ZZ^{Q_0}_{\geq 0}$ and a rational vector $\alpha \in \QQ^{Q_0}$ is 
$$\C(\beta)_{\alpha} = \{ \alpha' \in \D(\beta) | \rep(Q, \beta)_{\wt(\alpha)}^{ss} \subseteq \rep(Q, \beta)_{\wt(\alpha')}^{ss}\},$$
where $\D(\beta)$ is the domain of semi-invariants associated to $Q$ and $\beta$ (we refer to Section \ref{dom} for the details behind our notation).

For each Schur root $\beta$ of $Q$, define $\mathcal{C}(\beta)$ to be the collection of maximal (with respect to dimension) GIT-cones of $\D(\beta)$. Let
$$\I = \bigcup \C(\beta),$$ 
where the union is over all Schur roots $\beta$ of $Q$. For any rational vector $\alpha \in \QQ^{Q_0}$, define: 
$$\I_{\alpha} = \{ \C \in \I \mid \alpha \in \C \}.$$

Now we are ready to state our first result:

\begin{theorem} \label{thm1} Let $Q$ be a quiver with vertex set $Q_0$ and let $\alpha_1$ and $\alpha_2$ be two rational vectors in $\QQ^{Q_0}$. Then $\alpha_1$ and $\alpha_2$ are $\git$-equivalent if and only if $\I_{\alpha_1}=\I_{\alpha_2}$. 
\end{theorem}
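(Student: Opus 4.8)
The plan is to show that the semi-stable subcategory $\rep(Q)^{ss}_{\wt(\alpha)}$ is completely encoded by the set $\I_\alpha$, by a direct two-way argument. Recall that $\rep(Q)^{ss}_{\wt(\alpha)} = \bigcup_\beta \rep(Q,\beta)^{ss}_{\wt(\alpha)}$, so it suffices to understand, for each dimension vector $\beta$, which representations of dimension $\beta$ are $\wt(\alpha)$-semi-stable. If $\beta$ is not a Schur root, then $\rep(Q,\beta)^{ss}_{\wt(\alpha)}$ contains no Schur representation, and by the standard theory one can reduce the question to the Schur roots appearing in a generic decomposition; so the essential content is carried by the Schur roots $\beta$, and for such $\beta$ membership in $\rep(Q,\beta)^{ss}_{\wt(\alpha)}$ amounts to $\alpha \in \D(\beta)$ together with $\alpha$ lying in the appropriate GIT-cone. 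This is where the cones $\C(\beta)_\alpha$ and their maximal representatives $\C(\beta)$ come in.

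First I would prove the easy direction: if $\alpha_1$ and $\alpha_2$ are $\git$-equivalent, then $\I_{\alpha_1} = \I_{\alpha_2}$. By symmetry it is enough to show $\I_{\alpha_1} \subseteq \I_{\alpha_2}$. Take $\C \in \I_{\alpha_1}$, so $\C$ is a maximal GIT-cone of $\D(\beta)$ for some Schur root $\beta$ and $\alpha_1 \in \C$. Writing $\C = \C(\beta)_{\alpha_1}$ (a maximal cone containing $\alpha_1$ must be of this form), the equality of semi-stable subcategories gives $\rep(Q,\beta)^{ss}_{\wt(\alpha_1)} = \rep(Q,\beta)^{ss}_{\wt(\alpha_2)}$, hence by the definition of the GIT-cone $\C(\beta)_{\alpha_1} = \C(\beta)_{\alpha_2}$, and this common cone contains $\alpha_2$ and is still maximal, so $\C \in \I_{\alpha_2}$.

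For the converse, assume $\I_{\alpha_1} = \I_{\alpha_2}$; I want $\rep(Q)^{ss}_{\wt(\alpha_1)} = \rep(Q)^{ss}_{\wt(\alpha_2)}$. Again by symmetry it suffices to prove one inclusion, and since every representation in the semi-stable subcategory has a dimension vector, it is enough to fix a dimension vector $\beta$ and show $\rep(Q,\beta)^{ss}_{\wt(\alpha_1)} \subseteq \rep(Q,\beta)^{ss}_{\wt(\alpha_2)}$ whenever the left side is nonempty. Nonemptiness forces $\alpha_1 \in \D(\beta)$, and I would first reduce to the case where $\beta$ is a Schur root: in general the $\wt(\alpha_1)$-semi-stable locus is governed by the canonical decomposition of $\beta$ into Schur roots that are themselves $\wt(\alpha_1)$-semi-stable, and equality of the semi-stable subcategory on each such Schur piece propagates back up. So assume $\beta$ is a Schur root with $\alpha_1 \in \D(\beta)$. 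Choose a maximal GIT-cone $\C$ of $\D(\beta)$ with $\alpha_1 \in \C$; then $\C \in \I_{\alpha_1} = \I_{\alpha_2}$, so $\alpha_2 \in \C$ as well, and moreover $\alpha_2$ lies in $\D(\beta)$ since $\C \subseteq \D(\beta)$. Now $\C$ is a GIT-cone of $\D(\beta)$, so $\C \subseteq \C(\beta)_{\alpha'}$ for the $\alpha'$ defining it, and $\alpha_1, \alpha_2 \in \C$ forces $\rep(Q,\beta)^{ss}_{\wt(\alpha_1)}$ and $\rep(Q,\beta)^{ss}_{\wt(\alpha_2)}$ to coincide with $\rep(Q,\beta)^{ss}_{\wt(\alpha')}$, giving the desired inclusion (in fact equality).

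The main obstacle is the reduction from arbitrary dimension vectors $\beta$ to Schur roots, i.e., justifying that the index set $\I$ — built only from Schur roots — still sees \emph{all} of the semi-stable subcategory. This requires invoking the canonical decomposition and the fact (from the theory of semi-invariants of quivers, e.g.\ work of Derksen--Weyman and Schofield) that a representation is $\wt(\alpha)$-semi-stable if and only if its generic decomposition refines into $\wt(\alpha)$-semi-stable Schur summands, together with a matching statement at the level of GIT-cones: the GIT-cone $\C(\beta)_\alpha$ is an intersection of pullbacks of the GIT-cones $\C(\beta_i)_\alpha$ along the faces cut out by the canonical decomposition. Making this bookkeeping precise — in particular checking that a maximal cone in $\D(\beta)$ for non-Schur $\beta$ does not contribute anything not already recorded by the Schur-root cones — is the technical heart; once it is in place, both inclusions above are essentially formal consequences of the definition of the GIT-cone.
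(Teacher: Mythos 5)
Your overall strategy (read off the semi-stable subcategory from the set of maximal GIT-cones containing $\alpha$) is the right one, but the backward direction as written has a genuine hole at its key step. You claim that if $\alpha_1,\alpha_2$ both lie in a single maximal GIT-cone $\C=\C(\beta)_{\alpha'}$, then $\rep(Q,\beta)^{ss}_{\wt(\alpha_1)}$ and $\rep(Q,\beta)^{ss}_{\wt(\alpha_2)}$ coincide with $\rep(Q,\beta)^{ss}_{\wt(\alpha')}$. This is false: by definition, $\alpha_i\in\C(\beta)_{\alpha'}$ gives only the one-sided inclusion $\rep(Q,\beta)^{ss}_{\wt(\alpha')}\subseteq\rep(Q,\beta)^{ss}_{\wt(\alpha_i)}$, and a point on the boundary of a maximal cone typically has a strictly larger semi-stable subcategory than an interior point. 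What rescues the argument is that the hypothesis $\I_{\alpha_1}=\I_{\alpha_2}$ gives you membership in \emph{every} maximal cone containing $\alpha_1$, and the paper then invokes Keicher's result (\cite[Lemma 5]{Keicher}) that $\C(\beta)_{\alpha_1}=\bigcap_{\C\ni\alpha_1}\C$, the intersection over all maximal cones of $\mathcal F(\beta)$ containing $\alpha_1$ --- which is legitimate precisely because $\mathcal F(\beta)$ is a pure fan of dimension $|Q_0|-1$ when $\beta$ is a Schur root (Proposition \ref{pure-fan}). From this one gets $\C(\beta)_{\alpha_2}\subseteq\C(\beta)_{\alpha_1}$, hence $\alpha_2\in\C(\beta)_{\alpha_1}$, i.e.\ the desired inclusion of semi-stable loci. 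Without this intersection lemma your deduction does not go through. (A smaller instance of the same confusion appears in your forward direction: a maximal cone containing $\alpha_1$ need not equal $\C(\beta)_{\alpha_1}$; but there the argument is easily repaired, since $\alpha_1\in\C(\beta)_{\alpha}$ already gives $\rep(Q,\beta)^{ss}_{\wt(\alpha)}\subseteq\rep(Q,\beta)^{ss}_{\wt(\alpha_1)}=\rep(Q,\beta)^{ss}_{\wt(\alpha_2)}$, hence $\alpha_2\in\C(\beta)_{\alpha}$.)

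The second gap is the reduction from arbitrary dimension vectors to Schur roots, which you correctly identify as the technical heart but then propose to handle with the canonical (generic) decomposition. That is the wrong tool: the generic decomposition governs a \emph{generic} representation of dimension $\beta$, whereas $\rep(Q,\beta)^{ss}_{\wt(\alpha)}$ contains non-generic representations whose semi-stability is not detected by it; your asserted equivalence ``$V$ is $\wt(\alpha)$-semi-stable iff its generic decomposition refines into semi-stable Schur summands'' is not a true statement about individual representations. The paper's reduction is both simpler and correct: given $M\in\rep(Q)^{ss}_{\wt(\alpha_1)}$, take a Jordan--H\"older filtration of $M$ \emph{inside the abelian category} $\rep(Q)^{ss}_{\wt(\alpha_1)}$; its factors are $\wt(\alpha_1)$-stable, so their dimension vectors are Schur roots to which the cone argument applies, and since $\rep(Q)^{ss}_{\wt(\alpha_2)}$ is closed under extensions, $\wt(\alpha_2)$-semi-stability of the factors propagates back to $M$. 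You should replace the generic-decomposition reduction with this filtration argument.
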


Let us assume now that $Q$ is tame and let $N$ be the number of non-homogeneous regular tubes in the Auslander-Reiten quiver of $Q$. Let $r_i$ be the rank of the $i^{th}$ tube, let $I = (a_i)_{i=1}^{N}$ with $1\leq a_i \leq r_i$, and let $R$ be the set of all such multi-indices $I$. Let $\{\beta_{i,j}\}_{i = 1, \ldots, N, j = 1, \ldots, r_i}$ be the regular simple roots in the non-homogeneous tubes, and set: $$\alpha_I:=\delta + \sum_{i=1}^n \sum_{j\neq a_i} \beta_{i,j},$$
i.e. $\alpha_I$ is the sum of $\delta$, and all other regular simples {\it except} $\beta_{i,a_i}$ for $i = 1, \ldots, N$. Now, we are ready to state our second result, which recovers Theorem 7.4 in \cite{IngPacTho}.

\begin{theorem} \label{thm2} Let $Q$ be a tame quiver. Then: 
$$\I = \{\C(\delta)_{\alpha_I}\}_{I \in R} \cup \{\D(\beta)\}_{\beta},$$
where $\delta$ is the unique isotropic Schur root and the $\beta$'s run through all real Schur roots of $Q$.
\end{theorem}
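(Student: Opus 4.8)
The plan is to compute, for each Schur root $\beta$ of $Q$, the set $\C(\beta)$ of maximal GIT-cones of $\D(\beta)$, and to take the union. For a tame quiver the Schur roots are exactly the real Schur roots together with the unique isotropic Schur root $\delta$: for $n\ge 2$ the canonical decomposition of $n\delta$ is $\delta\oplus\cdots\oplus\delta$, whose generic representation $S_{\lambda_1}\oplus\cdots\oplus S_{\lambda_n}$ is not a brick, whereas $\delta$ itself is a Schur root (any homogeneous regular simple is a brick). Thus $\I=\bigl(\bigcup_{\beta}\C(\beta)\bigr)\cup\C(\delta)$, with $\beta$ running over the real Schur roots, and it suffices to prove: \textup{(a)} $\C(\beta)=\{\D(\beta)\}$ for every real Schur root $\beta$; and \textup{(b)} $\C(\delta)=\{\C(\delta)_{\alpha_I}\}_{I\in R}$.

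For \textup{(a)} I would use that the generic representation $M_\beta$ of dimension $\beta$ is exceptional, so $\End(M_\beta)=K$ and $\Ext^1(M_\beta,M_\beta)=0$. By the description of $\D(\beta)$ in Section~\ref{dom}, $M_\beta$ is $\wt(\alpha)$-semistable for every $\alpha\in\D(\beta)$, and $\D(\beta)$ is a full-dimensional rational polyhedral cone inside the hyperplane $\{\theta:\theta\cdot\beta=0\}$ (and the Euler form is unimodular, so the only linear functional vanishing on all of $\D(\beta)$ is a multiple of $\theta\cdot\beta$). Because $\beta$ is a primitive real root, no dimension vector of a proper nonzero subrepresentation of a $\beta$-dimensional representation lies in $\mathbb{Q}\beta$; hence for $\alpha$ generic in $\relint\D(\beta)$ every $\wt(\alpha)$-semistable representation of dimension $\beta$ is in fact $\wt(\alpha)$-stable, and since $\M(Q,\beta)_{\wt(\alpha)}$ is then a single point we get $\rep(Q,\beta)^{ss}_{\wt(\alpha)}=\GL(\beta)\cdot M_\beta$. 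Therefore $\C(\beta)_{\alpha}=\{\alpha'\in\D(\beta):M_\beta\text{ is }\wt(\alpha')\text{-semistable}\}=\D(\beta)$, so $\D(\beta)$ is itself a GIT-cone and, being contained in no other GIT-cone of $\D(\beta)$, is the unique maximal one.

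For \textup{(b)}, the heart of the matter, I would analyze $\rep(Q,\delta)^{ss}_{\theta}$ for $\theta\in\D(\delta)$ through the regular category $\R$. First, every $\theta$-semistable representation of dimension $\delta$ is regular: a preprojective indecomposable summand has negative defect $\partial=\la\delta,-\ra$, which forces a preinjective summand of positive defect, and the $\Hom$-orthogonality among the preprojective, regular and preinjective parts then produces a destabilizing subrepresentation. Next, since the slope-zero semistable subcategory is abelian and closed under extensions and summands, a $\delta$-dimensional representation is $\theta$-semistable if and only if each of its indecomposable summands is; and as $\R$ splits into the pairwise $\Hom$- and $\Ext^1$-orthogonal non-homogeneous tubes $\mathcal T_1,\dots,\mathcal T_N$ of ranks $r_1,\dots,r_N$ together with the $\PP^1$-family of homogeneous tubes, the relevant indecomposables are the homogeneous regular simples $S_\lambda$ (of dimension $\delta$), the regular simples $S_{i,j}$, and regular uniserials inside a single $\mathcal T_i$. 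The simples $S_\lambda$ (generic $\lambda$) are $\theta$-semistable for all $\theta\in\D(\delta)$ — the inequalities forcing this come from their non-regular subrepresentations and cut out the facets of $\D(\delta)$ — so $\rep(Q,\delta)^{ss}_{\theta}$ always contains the generic homogeneous simples, and $\M(Q,\delta)_\theta\cong\PP^1$ throughout $\D(\delta)$. The remaining data is supplied independently by each non-homogeneous tube: inside $\mathcal T_i$ the indecomposable representations of dimension $\delta$ are the $r_i$ length-$r_i$ uniserials $T_{i,m}$ (top $S_{i,m}$, socle $S_{i,m+1}$), each a brick, and a telescoping computation with the partial-sum inequalities $\theta\cdot(\beta_{i,m+1}+\cdots+\beta_{i,m+\ell})\le 0$ shows that away from the hyperplanes $\theta\cdot(\beta_{i,m+1}+\cdots+\beta_{i,m+\ell})=0$ exactly one index $a_i\in\{1,\dots,r_i\}$ makes $T_{i,a_i-1}$ semistable, the $\theta$-stable objects of $\mathcal T_i$ being then $T_{i,a_i-1}$ together with the regular simples $S_{i,m}$, $m\notin\{a_i-1,a_i\}$.

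It follows that the maximal GIT-cones of $\D(\delta)$ — those $\C(\delta)_\alpha$ with $\rep(Q,\delta)^{ss}_{\wt(\alpha)}$ minimal — are precisely the chambers obtained by choosing an $a_i$ for each $i$, hence are indexed by multi-indices $I=(a_i)_{i=1}^N\in R$. To match chamber $I$ with $\alpha_I$, I would compute, using that $\delta$ lies in the radical of the Euler form, that $\la\delta,\beta_{i,j}\ra=0$, that $\Hom$ and $\Ext^1$ vanish between regular simples in distinct tubes, and that $\la\beta_{i,l},\beta_{i,j}\ra=\delta_{l,j}-\delta_{l,j+1}$ within a tube: this gives $\wt(\alpha_I)\cdot\delta=\la\alpha_I,\delta\ra=0$ and $\wt(\alpha_I)\cdot\beta_{i,j}=\la\alpha_I,\beta_{i,j}\ra=\delta_{j+1,a_i}-\delta_{j,a_i}$, so the local weight vector of $\wt(\alpha_I)$ on $\mathcal T_i$ is $e_{a_i-1}-e_{a_i}$; substituting into the telescoping inequalities shows it lies strictly inside the chamber labelled $a_i$ and outside all others. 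Hence $\wt(\alpha_I)$ is interior to the chamber indexed by $I$, so $\C(\delta)_{\alpha_I}$ is exactly that chamber, a maximal GIT-cone; distinct multi-indices give distinct $\theta$-stable loci, hence distinct cones, and every maximal GIT-cone, having minimal semistable locus, is one of these. With \textup{(a)} this yields $\I=\{\C(\delta)_{\alpha_I}\}_{I\in R}\cup\{\D(\beta)\}_{\beta}$. The main obstacle is the representation-theoretic input in \textup{(b)}: establishing that $\theta$-semistable $\delta$-dimensional representations are regular, pinning down the non-regular subrepresentations of the homogeneous simples that carve out $\D(\delta)$, and checking that the homogeneous facet conditions and the per-tube wall conditions are independent enough that all $\prod_i r_i$ combinatorial chambers are genuinely realized; a secondary but delicate point is keeping the cyclic (mod $r_i$) bookkeeping of tops, socles and the Auslander--Reiten translate consistent so that the chamber labels line up with $\alpha_I$ and not with a rotation of it.
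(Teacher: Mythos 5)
Your reduction to (a) real Schur roots and (b) the isotropic root $\delta$, together with the per-tube telescoping computation identifying the chambers $C_I$ and the check that $\alpha_I$ lands in the chamber labelled by $I$, tracks the paper's own route (Lemmas \ref{big1} and \ref{big2}) quite closely, and part (a) is essentially fine. But in part (b) there is a genuine gap at the step ``it follows that the maximal GIT-cones of $\D(\delta)$ are precisely the chambers.'' The cone $\C(\delta)_{\alpha_I}$ is cut out by \emph{all} $\delta$-dimensional $\wt(\alpha_I)$-polystable representations: by equation (\ref{CC5}), $\C(\delta)_{\alpha_I}=\bigcap_W\Omega(W)$ over such $W$. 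Your tube-by-tube analysis identifies the $\wt(\alpha_I)$-stable objects inside each tube, but it never rules out a \emph{decomposable} polystable $W=\bigoplus_l V_l$ whose $\wt(\alpha_I)$-stable summands are real regular representations drawn from one or several tubes with $\sum_l\dv V_l=\delta$. If such a $W$ existed, its orbit cone $\Omega(W)=\bigcap_l\Omega(V_l)$ would impose equalities $\la\alpha',\dv V_l\ra=0$ that fail on generators of $C_I$ (already $\la\beta_{i,j},\beta_{i,j}\ra=1$), so $\C(\delta)_{\alpha_I}$ would be strictly smaller than $C_I$ and the identification of maximal GIT-cones with the combinatorial chambers would collapse. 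Excluding this possibility is precisely the content of the paper's Lemmas \ref{sametube}, \ref{actually} and \ref{list} (the stable summands of a $\delta$-dimensional polystable must all lie in one tube, and then the representation must be indecomposable), an argument that uses the value of $\delta$ at the extending vertex and the relations $\sum_j\beta_{i,j}=\delta$; your sketch contains no substitute for it.

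Two further points you flag but do not resolve are also load-bearing. First, the claim that every homogeneous regular simple is $\theta$-semistable for \emph{every} $\theta\in\D(\delta)$ (not just generic $\theta$, and for every member of the $\PP^1$-family) is the paper's Lemma \ref{dan}; it requires showing that a preprojective subrepresentation $V'$ of such a $V$ either has $\dv V'\hookrightarrow\delta$ or forces $\Ext_Q^1(V',V/V')\neq 0$, which contradicts Proposition \ref{prepro} --- this is where \cite[Theorem 2.7]{DW2} enters, and ``the inequalities come from their non-regular subrepresentations'' is not yet an argument. Second, your blanket assertion that every $\theta$-semistable $\delta$-dimensional representation is regular is false on the boundary of $\D(\delta)$ (e.g.\ at $\theta=0$); what is actually needed, and what the paper's Lemma \ref{reg} proves, is the statement for the specific weights $\wt(\alpha_I)$, using that the coefficient of $\delta$ in $\alpha_I$ is positive. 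With these three items supplied, your chamber analysis does reassemble into the paper's proof.
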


\noindent
\textbf{Acknowledgment:} The second author would like to thank Charles Paquette for clarifying discussions on the results in \cite{IngPacTho}. She is also thankful to Dan Kline for his assistance with the proof of Lemma \ref{dan}. The authors were supported by the NSA under grant H98230-15-1-0022. 

\section{Background} \label{background-sec}

Let $Q=(Q_0,Q_1,t,h)$ be a finite quiver with vertex set $Q_0$ and arrow set $Q_1$. The two functions $t,h:Q_1 \to Q_0$ assign to each arrow $a \in Q_1$ its tail \emph{ta} and head \emph{ha}, respectively.

A {\bf representation} $V$ of $Q$ over $K$ is a collection $(V(x),V(a))_{x\in Q_0, a\in Q_1}$ of finite-dimensional $K$-vector spaces $V(x)$, $x \in Q_0$, and $K$-linear maps $V(a): V(ta) \to V(ha)$, $a \in Q_1$. The dimension vector of a representation $V$ of $Q$ is the function $\dv V \colon Q_0 \to \ZZ$ defined by $(\dv V)(x)=\dim_{K} V(x)$ for $x\in Q_0$. The one-dimensional representation of $Q$ supported at vertex $x \in Q_0$ is denoted by $S_x$ and its dimension vector is denoted by $\ee_x$. By a dimension vector of $Q$, we simply mean a vector $\beta \in \ZZ_{\geq 0}^{Q_0}$.

Let $V$ and $W$ be two representations of $Q$. A {\bf morphism} $\varphi:V \rightarrow W$ is defined to be a collection $(\varphi(x))_{x \in Q_0}$ of $K$-linear maps with $\varphi(x) \in \Hom_K(V(x), W(x))$ for each $x \in Q_0$, such that $\varphi(ha)V(a)=W(a)\varphi(ta)$ for each $a \in Q_1$. We denote by $\Hom_Q(V,W)$ the $K$-vector space of all morphisms from $V$ to $W$. We say that $V$ is a subrepresentation of $W$ if $V(x)$ is a subspace of $W(x)$ for each $x \in Q_0$ and $(i_x: V(x) \hookrightarrow W(x))_{x \in Q_0}$ is a morphism of representations, i.e. $V(a)$ is the restriction of $W(a)$ to $V(ta)$ for each $a \in Q_1$. The category of all representations of $Q$ is denoted by $\rep(Q)$. It turns out that $\rep(Q)$ is an abelian hereditary category. A representation $V \in \rep(Q)$ is called a {\bf Schur representation} if $\End_Q(V) \cong K$. A Schur representation $V \in \rep(Q)$ is called {\bf exceptional} if $\Ext_Q^1(V,V) = 0$. 

The path algebra, $KQ$, of $Q$ has a $K$-basis consisting of all paths (including the trivial ones), and multiplication in $KQ$ is given by concatenation of paths. It is easy to see that any $KQ$-module defines a representation of $Q$, and vice-versa. Furthermore, the category $\module(KQ)$ of $KQ$-modules is equivalent to the category $\rep(Q)$. In what follows, we identify $\module(KQ)$ and $\rep(Q)$, and use the same notation for a module and the corresponding representation.

The Euler form of $Q$ is the bilinear form $\langle -,- \rangle: \ZZ^{Q_0} \times \ZZ^{Q_0} \to \ZZ$ defined by
$$
\langle \alpha, \beta \rangle=\sum_{x \in Q_0} \alpha(x)\beta(x)-\sum_{a \in Q_1} \alpha(ta) \beta(ha), \forall \alpha, \beta \in \ZZ^{Q_0}.
$$
This extends to a bilinear form on $\rat^{Q_0} \times \rat^{Q_0}$. The corresponding Tits quadratic form is $q:\ZZ^{Q_0}\to \ZZ$, $q(\alpha)=\langle \alpha, \alpha \rangle, \forall \alpha \in \ZZ^{Q_0}$. 

For any two representations $V, W \in \rep(Q)$, we have (see \cite{R}): 
$$
\langle \dv V, \dv W \rangle=\dim_K \Hom_Q(V,W)-\dim_K \Ext^1_{Q}(V,W).
$$
\noindent
Since $Q$ is assumed to be acyclic, the linear map: 
\begin{align*}
\rat^{Q_0} & \to (\rat^{Q_0})^* \\
\alpha & \mapsto \<\alpha, \cdot\>
\end{align*}
is an isomorphism. In fact, composing this map on the right with the canonical isomorphism $(\rat^{Q_0})^* \to \rat^{Q_0}$, we get the isomorphism $\wt:\rat^{Q_0} \to \rat^{Q_0}$. Given $\theta, \beta \in \rat^{Q_0}$, we denote by $\theta(\beta) := \sum_{x\in Q_0} \theta(x)\beta(x)$. Then for any $\alpha, \beta \in \rat^{Q_0}$, we have that 
$$\wt(\alpha)(\beta)=\< \alpha, \beta \>.$$

We call a dimension vector $\beta$ a (Schur, exceptional) {\bf root} of $Q$ if there exists a $\beta$-dimensional (Schur, exceptional) indecomposable representation. If $\beta$ is a root of $Q$, then $q(\beta) \leq 1$. More specifically, a root $\beta$ is said to be {\bf real} if $q(\beta) =1$. We call $\beta$ an (imaginary) {\bf isotropic root} if $q(\beta)=0$. If $q(\beta) <0$ we say that $\beta$ is {\bf strictly imaginary}.

One of the fundamental results in the representation theory of finite-dimensional algebras is the {\bf Auslander-Reiten formula}, which we state for quiver representations (see for example \cite{BB1}): For two indecomposable representations $V$ and $W$ of $Q$, we have: 
$$
\Hom_Q(V, \tau W) \cong \Hom_Q(\tau^-V, W) \cong \Du\Ext_Q^1(W,V),
$$
where $\tau$ is the Auslander-Reiten translation of $Q$.

\subsection{Domains of semi-invariants of quivers} \label{dom}
Let $\theta \in \QQ^{Q_0}$. A representation $V \in \rep(Q)$ is said to {\bf $\theta$-semi-stable} if:
$$
\theta(\dv V)=0 \text{~and~} \theta(\dv V')\leq 0, \forall {\ } V' \leq V.
$$
We say that $V$ is $\theta$-stable if $V$ is not the zero representation, $\theta(\dv V)=0$, and $\theta(\dv V')<0$ for all proper subrepresentations $0 \neq V' < V$. We define $\rep(Q)^{ss}_{\theta}$ to be the full subcategory of $\rep(Q)$ whose objects are the $\theta$-semi-stable representations of $Q$. Note that the simple objects of $\rep(Q)^{ss}_{\theta}$ are precisely the $\theta$-stable representations. Furthermore, $\rep(Q)^{ss}_{\theta}$ is an abelian subcategory of $\rep(Q)$, closed under extensions. It is also Artinian and Noetherian and hence any $V \in \rep(Q)^{ss}_{\theta}$ has a filtration whose factors are $\theta$-stable. 

Let $\beta$ be a dimension vector of $Q$. We say that a dimension vector $\beta$ is  $\theta$-(semi)-stable if there exists a $\beta$-dimensional $\theta$-(semi)-stable representation. We define the {\bf domain of semi-invariants} associated to $Q$ and $\beta$ to be: 
$$
\D(\beta) = \{\alpha \in \rat^{Q_0} | \beta \mbox{ is } \wt(\alpha)\mbox{-semi-stable}\}.
$$

In what follows, we write $\beta' \hookrightarrow \beta$ to mean that any $\beta$-dimensional representation has a subrepresentation of dimension vector $\beta'$. Then, according to King's criterion for semi-stability of quiver representations (see  \cite{KSC}), we have:
$$
\D(\beta)=\{\alpha \in \QQ^{Q_0} \mid \<\alpha,\beta\>=0, \<\alpha, \beta' \> \leq 0, \forall{\ } \beta' \hookrightarrow \beta\}.
$$
This makes it clear that $\D(\beta)$ is a rational convex polyhedral cone. Furthermore, it follows from Schofield's results in \cite{S2, S1} that:
$$
\D(\beta) \cap \ZZ_{\geq 0}^{Q_0}=\{\alpha \in \ZZ_{\geq 0}^{Q_0} \mid \alpha \mbox{ is } -\< \cdot, \beta \>-\mbox{semi-stable}\}.
$$ 

Next, we give a description of $\D(\beta)$ in terms of generators. We say that a dimension vector $\alpha$ is {\bf $\beta$-simple} if $\alpha$ is $-\<\cdot, \beta\>$-stable. Recall that the projective cover of the simple representation $S_x$ is denoted $P_x$, and its dimension vector is denoted by $\gamma_x$, for $x \in Q_0$. For any $\rho \in \ZZ_{\geq 0}^{Q_0}$, we can construct a projective representation $P_{\rho} = \bigoplus_{x \in Q_0} P_{x}^{\rho(x)}$. 

\begin{lemma} \label{simples} Let $\beta$ be a dimension vector. Then $\D(\beta)$ is generated by the $\beta$-simple roots, together with $-\gamma_x$ for $x \in Q_0$ such that $\beta(x) = 0$.   
\end{lemma}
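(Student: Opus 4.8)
The plan is to show that $\D(\beta)$ is the cone generated by the $\beta$-simple roots together with the negative projective dimension vectors $-\gamma_x$ for $x$ with $\beta(x)=0$. I would argue by two inclusions. For the easy inclusion, I would verify that each proposed generator actually lies in $\D(\beta)$: if $\alpha$ is $\beta$-simple then $\alpha$ is $-\langle\cdot,\beta\rangle$-stable, hence in particular $-\langle\cdot,\beta\rangle$-semi-stable, so $\langle\alpha,\beta\rangle=0$ and $\langle\alpha,\beta'\rangle\le 0$ for all $\beta'\hookrightarrow\beta$ by King's criterion, giving $\alpha\in\D(\beta)$. For the vector $-\gamma_x$ with $\beta(x)=0$, I would use that $\langle\gamma_x,\beta\rangle=\langle\ddim P_x,\beta\rangle=\dim\Hom_Q(P_x,M)=\beta(x)=0$ for any $\beta$-dimensional $M$, and similarly $\langle\gamma_x,\beta'\rangle=\beta'(x)\ge 0$ for every sub-dimension-vector $\beta'\hookrightarrow\beta$, so $\langle -\gamma_x,\beta'\rangle\le 0$ and King's criterion again places $-\gamma_x$ in $\D(\beta)$. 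Since $\D(\beta)$ is a convex cone, it contains the cone generated by all these vectors.

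The substantial direction is the reverse inclusion: every $\alpha\in\D(\beta)$ lies in the cone generated by the listed vectors. Here I would pass through the Schofield-type description $\D(\beta)\cap\ZZ_{\ge 0}^{Q_0}=\{\alpha\in\ZZ_{\ge 0}^{Q_0}\mid \alpha \text{ is } -\langle\cdot,\beta\rangle\text{-semi-stable}\}$. The idea is: a $-\langle\cdot,\beta\rangle$-semi-stable dimension vector $\alpha$ (with all coordinates nonnegative) admits, by the filtration property of $\rep(Q)^{ss}_{-\langle\cdot,\beta\rangle}$ recorded in Section \ref{dom}, a decomposition as a sum of $-\langle\cdot,\beta\rangle$-stable dimension vectors, i.e. a sum of $\beta$-simple roots. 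Thus the lattice points of $\D(\beta)$ that lie in $\ZZ_{\ge 0}^{Q_0}$ are exactly the nonnegative integer combinations of $\beta$-simple roots. To handle a general $\alpha\in\D(\beta)$, which need not have nonnegative coordinates, I would first show $\alpha(x)\ge 0$ whenever $\beta(x)\ne 0$ — this follows by taking $\beta'=\ee_x\hookrightarrow\beta$ when $\beta(x)>0$ forces a subrepresentation supported at $x$... more carefully, one uses that $S_x\hookrightarrow\beta$ is not generally true, so instead I would add a suitable large multiple of the $-\gamma_x$'s (for $x$ with $\beta(x)=0$) to clear the negative coordinates: writing $\alpha = \alpha' - \sum_{\beta(x)=0} c_x\gamma_x$ with $\alpha'\in\ZZ_{\ge 0}^{Q_0}$ and $c_x\ge 0$, and checking that $\alpha'$ is still $-\langle\cdot,\beta\rangle$-semi-stable (its pairing with $\beta$ and with sub-dimension vectors only improves, since we are adding $\gamma_x$'s which pair to $0$ with $\beta$ and nonnegatively with sub-dimension vectors). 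Then $\alpha'$ is a nonnegative combination of $\beta$-simple roots, so $\alpha$ is in the claimed cone. Finally, a density/rationality argument (the cone is rational polyhedral, so it is the closure of its rational interior, and rational points scale to lattice points) extends the conclusion from lattice points to all of $\D(\beta)$.

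The main obstacle I anticipate is the bookkeeping in the reverse inclusion: precisely, verifying that after subtracting the $-\gamma_x$ corrections the resulting nonnegative vector $\alpha'$ is genuinely $-\langle\cdot,\beta\rangle$-semi-stable, and that the "filtration into stables" statement at the level of modules descends correctly to the level of dimension vectors (one needs that a $-\langle\cdot,\beta\rangle$-semi-stable \emph{dimension vector} is a sum of $-\langle\cdot,\beta\rangle$-stable ones, which requires choosing a semi-stable representation of that dimension and using its Jordan–Hölder factors in $\rep(Q)^{ss}_{-\langle\cdot,\beta\rangle}$, whose dimension vectors are by definition $\beta$-simple). I would also need to be careful that the coordinates $x$ with $\beta(x)=0$ are exactly the ones where $-\gamma_x$ is needed and that no $\beta$-simple root has support at such an $x$ — this last point is immediate since a $\beta$-simple root is a dimension vector $\alpha$ with $\langle\alpha,\beta\rangle=0$ that is itself a root, and the structure of the stable representations forces its support to lie in $\supp\beta$. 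Once these points are nailed down, assembling the two inclusions gives the lemma.
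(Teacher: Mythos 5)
Your overall strategy is the same as the paper's: add large multiples of the projective dimension vectors $\gamma_x=\dv P_x$ for the vertices $x$ with $\beta(x)=0$ so as to push $\alpha$ into $\ZZ_{\ge 0}^{Q_0}$, invoke Schofield's description of $\D(\beta)\cap\ZZ_{\ge 0}^{Q_0}$ together with a Jordan--H\"older filtration in $\rep(Q)^{ss}_{-\<\cdot,\beta\>}$ to write the corrected vector as a sum of $\beta$-simples, and then subtract. There is, however, a genuine gap, and it sits exactly at the step you pass over in one clause: the assertion that one \emph{can} write $\alpha=\alpha'-\sum_{\beta(x)=0}c_x\gamma_x$ with $\alpha'\in\ZZ_{\ge 0}^{Q_0}$. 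The only correction vectors at your disposal are the $\gamma_x$ with $x\notin\supp\beta$, and $\gamma_x(y)=\dim_K P_x(y)$ vanishes unless $y$ is connected to $x$ by a path in the appropriate direction. So if $\alpha(y)<0$ at a vertex $y$ which is not reached from any vertex where $\beta$ vanishes, no choice of the $c_x$ can repair the $y$-coordinate. One must therefore prove that for $\alpha\in\D(\beta)$ a negative coordinate $\alpha(y)<0$ forces the existence of some $x$ with $\beta(x)=0$ and $\dim_K P_x(y)>0$. This is a real statement about $\D(\beta)$ --- it uses the inequalities $\<\alpha,\beta'\>\le 0$ for $\beta'\into\beta$ in an essential way --- and it is precisely what the paper imports from \cite[Lemma~6.5.7]{IOTW}. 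Without it, your reduction to the nonnegative case is unjustified and the argument does not close.

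Two smaller points. First, your verification that $\alpha'=\alpha+\sum c_x\gamma_x$ remains in $\D(\beta)$ is argued in the wrong direction: you say the pairings with sub-dimension vectors ``only improve'' because $\<\gamma_x,\beta'\>=\beta'(x)\ge 0$, but an increase is exactly what could violate the required inequality $\<\alpha',\beta'\>\le 0$. What actually saves the step is that for $\beta'\into\beta$ and $\beta(x)=0$ one has $0\le\beta'(x)\le\beta(x)=0$, so $\<\gamma_x,\beta'\>=0$ and the pairings are unchanged. Second, your parenthetical claim that a $\beta$-simple root must be supported inside $\supp\beta$ is false: for $Q\colon 1\to 2$ and $\beta=\ee_2$, the vector $(1,1)$ is $\beta$-simple. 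Fortunately nothing in the argument depends on that remark. Your concluding scaling/density step is fine, and is in fact simpler in the paper, which just replaces $\alpha$ by an integral multiple at the outset since the target is a cone.
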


\begin{proof} Let $\alpha \in \D(\beta)$. If $\alpha$ is not integral, multiply by a large enough integer to obtain an integral vector. 

Define the following projective representation: $P_\rho$, where $\rho(x) = 1$ if $\beta(x) = 0$, and $\rho(x) = 0$ otherwise. From \cite[Lemma 6.5.7]{IOTW}, we know that for each $y \in Q_0$ with $\alpha(y)<0$, there exists an $x \in Q_0$ with $\beta(x) = 0$ and $\dim_K P_x(y) >0$. Thus, there exists a positive integer $m_y$ such that $\alpha(y) + m_y \dim_K P_x(y) \geq 0$. Taking a sufficiently large $m \gg 0$, then, we have: 

$$\alpha + \dv P_{\rho}^m \in \inte_{\geq 0}^{Q_0}.$$

Now, $\dv P_{\rho} \in \D(\beta)$, because $\<\dv P_x, \beta \> = \beta(x) = 0$ and, if $\beta' \into \beta$, then $\<\dv P_x, \beta'\> = \beta'(x) \leq \beta(x) = 0$ for each $x$ with $\rho(x) = 1$. That is, $\beta$ is $\<\dv P_x,\cdot\>$-semi-stable for each $x$ with $\beta(x) = 0$. 

So, $\alpha + \dv P_{\rho}^m \in \D(\beta) \cap \inte_{\geq 0}^{Q_0}$, which means there exists a $V$ with $\dv V = \alpha + \dv P_{\rho}^m$, such that $V$ is $-\<\cdot, \beta\>$-semi-stable. Next, consider a Jordan-H\"older filtration of $V$ in the category $\rep(Q)_{-\<\cdot, \beta\>}^{ss}$: 
$$0=V_0 < V_1 < \cdots < V_{r-1} < V_r=V,$$
where the composition factors $V_i/V_{i-1}$ are $-\<\cdot, \beta\>$-stable; in particular, $\dv V_i/V_{i-1}$ is $\beta$-simple, for all $1 \leq i \leq r$. So,
$$\dv V = \alpha + \dv P_{\rho}^m = \sum_{i=1}^r \dv V_{i}/V_{i-1}$$
is a sum of $\beta$-simples. Finally, we see that $\alpha = \dv V - \dv P_{\rho}^m$ is a sum of $\beta$-simples, plus $-\gamma_x$'s for $x$ such that $\beta(x) = 0$. 
\end{proof}

A description of the faces of $\D(\beta)$ was found by Derksen and Weyman in \cite{DW2}. Specifically, let $\mathcal F$ be a (non-trivial) face of $\D(\beta)$ and choose a lattice point $\alpha$ in the relative interior of $\mathcal F$. It turns out that there exist unique $\wt(\alpha)$-stable dimension vectors $\beta_1, \ldots, \beta_l$ such that a generic representation in $\rep(Q,\beta)$ has a filtration (in $\rep(Q)^{ss}_{\wt(\alpha)}$) whose factors are $\wt(\alpha)$-stable representations of dimension $\beta_1, \ldots, \beta_l$ (in some order). We also write
$$
\beta=\beta_1 \pp \ldots \pp \beta_l
$$
and call this the $\wt(\alpha)$-stable decomposition of $\beta$. It is proved in \cite{DW2} that:
$$
\mathcal F=\D(\beta_1) \cap \ldots \cap \D(\beta_l).
$$

For any $\alpha \in \D(\beta)$, the GIT-class of $\alpha$ relative to $\beta$ is the set $\{ \alpha'  \in \rat^{Q_0} | \rep(Q,\beta)_{\wt(\alpha)}^{ss} = \rep(Q, \beta)_{\wt(\alpha')}^{ss}\}$. The following lemma will be needed in the proof of Proposition \ref{pure-fan}.

\begin{lemma} \label{L5} Let $\mathcal F$ be a face of $\D(\beta)$ and $\alpha \in \QQ^{Q_0}$ a rational vector lying in the relative interior of $\mathcal F$. Then the GIT-class of $\alpha$ relative to $\beta$ is included in $\mathcal F$.
\end{lemma}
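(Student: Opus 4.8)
The plan is to combine the Derksen--Weyman description of the faces of $\D(\beta)$ recalled above with the elementary fact that a direct summand of a $\theta$-semi-stable representation is again $\theta$-semi-stable. The point to keep in mind is that GIT-equivalence relative to $\beta$ controls, \emph{a priori}, only the $\beta$-dimensional semi-stable representations, whereas membership in $\mathcal F = \D(\beta_1)\cap\cdots\cap\D(\beta_l)$ is a statement about the smaller dimension vectors $\beta_i$; so the crux is to move information between these two levels.

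Since $\alpha$ lies in the relative interior of $\mathcal F$, it determines (after rescaling to a lattice point, which changes neither semi-stable subcategories nor GIT-classes) a $\wt(\alpha)$-stable decomposition $\beta = \beta_1 \pp \cdots \pp \beta_l$, where each $\beta_i$ is a $\wt(\alpha)$-stable dimension vector, $\beta_1+\cdots+\beta_l=\beta$, and $\mathcal F = \D(\beta_1)\cap\cdots\cap\D(\beta_l)$ by \cite{DW2}. First I would fix, for each $i$, a $\wt(\alpha)$-stable representation $V_i$ with $\dv V_i = \beta_i$ (these exist because the $\beta_i$ are $\wt(\alpha)$-stable dimension vectors), and form $W = \bigoplus_{i=1}^{l} V_i$. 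Because $\rep(Q)^{ss}_{\wt(\alpha)}$ is an abelian subcategory of $\rep(Q)$, it is closed under finite direct sums, so $W \in \rep(Q)^{ss}_{\wt(\alpha)}$; and since $\dv W = \beta$, in fact $W \in \rep(Q,\beta)^{ss}_{\wt(\alpha)}$.

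Now let $\alpha'$ be any vector in the GIT-class of $\alpha$ relative to $\beta$, so that $\rep(Q,\beta)^{ss}_{\wt(\alpha')} = \rep(Q,\beta)^{ss}_{\wt(\alpha)}$; in particular $W$ is $\wt(\alpha')$-semi-stable. I would then check that each summand $V_i$ is itself $\wt(\alpha')$-semi-stable: every subrepresentation of $V_i$ is a subrepresentation of $W$, so $\wt(\alpha')$ takes a non-positive value on its dimension vector; applying this to $V_i$ and to $\bigoplus_{j\neq i}V_j$ and using $\wt(\alpha')(\dv W)=0$ forces $\wt(\alpha')(\beta_i)=0$, which is the one remaining condition for $V_i \in \rep(Q)^{ss}_{\wt(\alpha')}$. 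Hence $\beta_i$ is $\wt(\alpha')$-semi-stable, i.e. $\alpha' \in \D(\beta_i)$, for every $i$, and therefore $\alpha' \in \D(\beta_1)\cap\cdots\cap\D(\beta_l) = \mathcal F$, as desired.

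I do not expect a serious technical obstacle here; the only genuinely non-formal step is recognizing the bridging device --- repackaging the stable factors $\beta_i$ into the single $\beta$-dimensional representation $W = \bigoplus_i V_i$, transporting semi-stability across the GIT-equivalence at the level of $\beta$, and then reading it back off the individual summands. The remaining ingredients (closure of $\rep(Q)^{ss}_{\theta}$ under direct sums and direct summands, and additivity of $\wt(\alpha')$ on the dimension vector of a direct sum) are routine. Degenerate cases, such as $\mathcal F = \D(\beta)$, are covered by the very same argument; one can also note directly that any $\alpha'$ in the GIT-class satisfies $\rep(Q,\beta)^{ss}_{\wt(\alpha')} \neq \varnothing$, hence $\alpha' \in \D(\beta)$ from the outset.
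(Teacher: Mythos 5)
Your proposal is correct and follows essentially the same route as the paper's proof: both pass to the $\wt(\alpha)$-stable decomposition $\beta=\beta_1\pp\cdots\pp\beta_l$ of Derksen--Weyman, form the polystable representation $\bigoplus_i V_i$ of dimension $\beta$, transport its semi-stability across the GIT-class, and read off $\alpha'\in\D(\beta_i)$ from the summands. The extra details you supply (rescaling to a lattice point, the additivity argument showing each summand is again semi-stable) are exactly the routine verifications the paper leaves implicit.
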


\begin{proof} Let
$$
\beta=\beta_1 \pp \ldots \pp \beta_l
$$
be the $\wt(\alpha)$-stable decomposition of $\beta$. Then:
$$
\mathcal F=\D(\beta_1) \cap \ldots \cap \D(\beta_l).
$$
For each $1 \leq i \leq l$, choose $V_i \in \rep(Q,\beta_i)^s_{\wt(\alpha)}$ and set $V:=\bigoplus_{i=1}^l V_i \in \rep(Q,\beta)^{ss}_{\wt(\alpha)}$. 

Now, let $\alpha'$ be a rational vector such that $\rep(Q,\beta)^{ss}_{\wt(\alpha)}=\rep(Q,\beta)^{ss}_{\wt(\alpha')}$. Then $V \in \rep(Q,\beta)^{ss}_{\wt(\alpha')}$ and so each $V_i$ becomes $\wt(\alpha')$-semi-stable, i.e. $\alpha' \in \D(\beta_i)$ for all $1 \leq i \leq l$. This shows that $\alpha' \in \mathcal F$. 
\end{proof}

\section{GIT-equivalence for rational vectors} 

Let $Q$ be an arbitrary quiver. Given a rational vector $\alpha \in \QQ^{Q_0}$, recall that $\rep(Q)^{ss}_{\wt(\alpha)}$ is a full subcategory of $\rep(Q)$ whose class of objects consists of all $\wt(\alpha)$-semi-stable representations of $Q$. 

We say that two vectors  $\alpha_1, \alpha_2 \in \QQ^{Q_0}$ are {\bf $\git$-equivalent}, and write $\alpha_1 \sim_{\git} \alpha_2$, if 
$$
\rep(Q)^{ss}_{\wt(\alpha_1)}=\rep(Q)^{ss}_{\wt(\alpha_2)}.
$$ 

In what follows, we give a characterization of this equivalence relation in terms of certain rational convex polyhedral cones of codimension one in $\QQ^{Q_0}$. These cones arise most naturally in the context of variation of GIT-quotients. As we will see in Section \ref{tame}, in case $Q$ is tame, we can explicitly describe these cones and show they coincide with those found by Ingalls-Paquette-Thomas in \cite{IngPacTho}.

We know from \cite[Theorem 6.1]{S1} that $\beta$ is a Schur root if and only if $\D^0(\beta):=\{\alpha \in \QQ^{Q_0} \mid \langle \alpha, \beta \rangle =0, \langle \alpha, \beta' \rangle < 0, \forall \beta' \hookrightarrow \beta, \beta'\neq \mathbf{0}, \beta\}$ is non-empty. In this case, $\D^0(\beta)$ is precisely the relative interior of $\D(\beta)$ and $\dim_K \D(\beta)=|Q_0|-1$.

For a vector $\alpha \in \QQ^{Q_0}$, consider the set:
$$
\C(\beta)_{\alpha}:=\{\alpha'\in \D(\beta) \mid \rep(Q,\beta)^{ss}_{\wt(\alpha)} \subseteq \rep(Q,\beta)^{ss}_{\wt(\alpha')}\}.
$$  
We point out that this a rational convex polyhedral subcone of $\D(\beta)$, called the \emph{GIT-cone of $\alpha$ relative to $\beta$} (for more details, see \cite{CC5}). 

Finally, we define the {\bf $\git$-fan associated to $(Q,\beta)$} to be:
$$
\mathcal F(\beta):=\{\C(\beta)_{\alpha} \mid \alpha \in \D(\beta)\} \cup \{\mathbf{0}\}.
$$

\begin{re} $(1)$ The addition of the trivial cone $\mathbf{0}$ to the collection of GIT-cones above is needed only when $\beta$ is not sincere. Indeed, if $\beta$ is sincere the GIT-cone corresponding to the zero vector is the zero cone. But if $\beta$ is not sincere then any GIT-cone $\C(\beta)_{\alpha}$ has dimension at least the number of vertices of $Q$ where $\beta$ is zero.
\smallskip

\noindent
$(2)$ Note that $\mathcal{F}(\beta) \cap \I = \C(\beta)$.
\end{re}

\begin{theorem}\label{git-fan} Keeping the same notation as above, $\mathcal F(\beta)$ is a finite  fan cover of $\mathcal D(\beta)$.
\end{theorem}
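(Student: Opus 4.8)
The plan is to show that $\mathcal{F}(\beta)$ is a fan in the standard sense (a finite collection of rational polyhedral cones, closed under taking faces, with pairwise intersections being faces of each) and that the union of its members is all of $\mathcal{D}(\beta)$. The natural framework is the theory of variation of GIT quotients: the GIT-cones $\C(\beta)_\alpha$ are exactly the chambers and their faces in the wall-and-chamber decomposition of $\mathcal{D}(\beta)$ induced by the finitely many semi-stability conditions $\langle \cdot, \beta'\rangle$ for $\beta' \hookrightarrow \beta$. Concretely, I would first observe that $\mathcal{D}(\beta)$ is cut out inside the hyperplane $\{\langle\alpha,\beta\rangle = 0\}$ by finitely many linear inequalities $\langle\alpha,\beta'\rangle \leq 0$, one for each subdimension vector $\beta' \hookrightarrow \beta$ (of which there are finitely many, since $0 \leq \beta' \leq \beta$ componentwise). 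These hyperplanes $H_{\beta'} = \{\langle\alpha,\beta'\rangle = 0\}$ subdivide $\mathcal{D}(\beta)$ into finitely many relatively open polyhedral pieces according to the sign vector $(\operatorname{sgn}\langle\alpha,\beta'\rangle)_{\beta'}$; the closures of these pieces, together with all their faces, form a finite polyhedral fan supported on $\mathcal{D}(\beta)$. This is a purely combinatorial fact about hyperplane arrangements restricted to a cone.

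The substantive step is to identify this arrangement fan with $\mathcal{F}(\beta)$, i.e. to show that two vectors $\alpha, \alpha' \in \mathcal{D}(\beta)$ lie in the relative interior of the same arrangement cell if and only if $\C(\beta)_\alpha = \C(\beta)_{\alpha'}$, and more generally that $\C(\beta)_\alpha$ is precisely the closed arrangement cell containing $\alpha$ in its relative interior. For one direction, if $\alpha$ and $\alpha'$ have the same sign vector relative to all $\beta' \hookrightarrow \beta$, then King's criterion shows that a representation of dimension $\beta$ is $\wt(\alpha)$-semistable iff it is $\wt(\alpha')$-semistable — one checks this on subrepresentations using that the relevant inequalities $\langle\cdot,\beta'\rangle \leq 0$ are equivalent — so $\rep(Q,\beta)^{ss}_{\wt(\alpha)} = \rep(Q,\beta)^{ss}_{\wt(\alpha')}$ and hence $\C(\beta)_\alpha = \C(\beta)_{\alpha'}$. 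For the reverse inclusion $\C(\beta)_\alpha \subseteq \overline{\text{cell}(\alpha)}$, I would use that $\C(\beta)_\alpha$ is defined by the inclusion $\rep(Q,\beta)^{ss}_{\wt(\alpha)} \subseteq \rep(Q,\beta)^{ss}_{\wt(\alpha')}$: picking a $\wt(\alpha)$-stable representation $V$ of each dimension vector $\beta_i$ appearing in the $\wt(\alpha)$-stable decomposition $\beta = \beta_1 \pp \cdots \pp \beta_l$ (as in the proof of Lemma~\ref{L5}), semistability of $V$ forces $\alpha' \in \bigcap_i \D(\beta_i)$, which is exactly the face of $\mathcal{D}(\beta)$ — hence a union of arrangement cells — that $\C(\beta)_\alpha$ must live in. Conversely, any $\alpha'$ in the relatively open cell of $\alpha$ satisfies $\C(\beta)_{\alpha'} = \C(\beta)_\alpha$ by the first direction, so the open cell is contained in $\C(\beta)_\alpha$; taking closures gives equality.

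Once the identification with the arrangement fan is established, the fan axioms are inherited for free: the arrangement cells and their faces are closed under taking faces, any two members meet in a common face, the collection is finite (finitely many sign vectors), and their union is $\mathcal{D}(\beta)$ by construction. Adding $\{\mathbf{0}\}$ as in the definition does no harm since $\mathbf{0}$ is a face of every cone in the arrangement, so the augmented collection is still a fan; if $\beta$ is sincere then $\mathbf{0}$ is already the minimal cone and the union is unaffected, and if $\beta$ is not sincere then $\mathbf{0}$ is still a face of the common vertex cell, as noted in the Remark.

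The main obstacle I anticipate is the reverse inclusion in the identification step: showing $\C(\beta)_\alpha$ does not spill outside the closed arrangement cell of $\alpha$. The inclusion-of-subcategories condition defining $\C(\beta)_\alpha$ is a priori weaker than equality of subcategories, so one must argue that even this one-sided inclusion, applied to cleverly chosen test representations (the $\wt(\alpha)$-stable summands), pins $\alpha'$ down to the correct face — and then combine this with convexity of $\C(\beta)_\alpha$ and the first (easy) direction to upgrade the relatively open cell to its closure. Care is also needed to ensure the finitely many inequalities genuinely suffice, i.e. that $\beta' \hookrightarrow \beta$ ranges over a finite set and that King's criterion is being applied with the correct sign conventions relating $\alpha$, $\wt(\alpha)$, and the pairing $\langle\cdot,\cdot\rangle$; these are routine but must be stated precisely. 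I would cite \cite{CC5} for the general VGIT picture to keep the argument short.
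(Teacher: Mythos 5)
The paper does not actually prove Theorem~\ref{git-fan}; it cites \cite{CC5} (following \cite{ResN}, see also \cite{Th,DH}), so your attempt is being measured against the general VGIT argument rather than an in-paper proof. Unfortunately, your identification step contains a genuine error. You propose to realize $\mathcal F(\beta)$ as the cell decomposition of $\D(\beta)$ induced by the hyperplanes $H_{\beta'}=\{\langle\cdot,\beta'\rangle=0\}$ for the \emph{generic} subdimension vectors $\beta'\hookrightarrow\beta$. But for a Schur root $\beta$, the relative interior of $\D(\beta)$ is exactly $\D^0(\beta)=\{\alpha \mid \langle\alpha,\beta\rangle=0,\ \langle\alpha,\beta'\rangle<0\ \forall\,\beta'\hookrightarrow\beta,\ \beta'\neq\mathbf 0,\beta\}$, i.e.\ \emph{none} of your hyperplanes meets the relative interior of $\D(\beta)$. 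Your arrangement therefore has a single top-dimensional cell, namely $\D(\beta)$ itself, and your "fan" is just the face fan of the cone $\D(\beta)$. This contradicts the paper's own Section~\ref{tame}: for a tame quiver with non-homogeneous tubes, $\D(\delta)$ decomposes into $\prod_i r_i>1$ distinct maximal GIT-cones $C_I$, whose walls lie in the interior of $\D(\delta)$ and are cut out by hyperplanes $H_{\beta_{i,j}}$ with $\beta_{i,j}\not\hookrightarrow\delta$ (no $\delta$-dimensional homogeneous simple has a subrepresentation of dimension $\beta_{i,j}$). Correspondingly, your "easy direction" is false: King's criterion for a \emph{specific} representation $V$ involves the dimension vectors of all subrepresentations of $V$, which for non-generic $V$ need not satisfy $\beta'\hookrightarrow\beta$; so agreement of signs on the generic subdimension vectors does not force $\rep(Q,\beta)^{ss}_{\wt(\alpha)}=\rep(Q,\beta)^{ss}_{\wt(\alpha')}$. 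Your reverse inclusion via Lemma~\ref{L5} suffers from the same conflation: for $\alpha\in\D^0(\beta)$ it only yields $\C(\beta)_\alpha\subseteq\D(\beta)$, which is vacuous.

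The skeleton of your argument can be salvaged, but the arrangement must be taken with respect to \emph{all} subdimension vectors $0\le\gamma\le\beta$ (a finite set), or better, one should work directly with the orbit cones $\Omega(W)$ for $W\in\rep(Q,\beta)$: there are only finitely many distinct such cones since each is cut out by the conditions $\langle\cdot,\dv W'\rangle\le 0$ over subrepresentations $W'\le W$, and $\C(\beta)_\alpha=\bigcap_W\Omega(W)$ over the $\wt(\alpha)$-polystable $W$ (equation~(\ref{CC5})). Showing that these intersections form a fan covering $\D(\beta)$ is precisely the content of \cite{ResN,CC5} and is not a purely combinatorial fact about a sign-vector arrangement; the closure and face-compatibility statements require the semicontinuity of the (semi)stable loci used in Proposition~\ref{pure-fan}.
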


\begin{re}
A proof of this result within the context of quiver invariant theory, which is based on \cite{ResN}, can be found in \cite{CC5} (see also \cite{Th, DH}). We point out that in \cite{CC5} it is actually proved that the image of $\mathcal F(\beta)$ through $\wt$ forms a fan covering of $\wt(\D(\beta))$. 
\end{re}

\begin{prop} \label{pure-fan} Assume that $\beta$ is a Schur root. Then $\mathcal F(\beta)$ is a pure fan of dimension $|Q_0|-1$.
\end{prop}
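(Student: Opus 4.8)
The plan is to show that every maximal cone in $\mathcal F(\beta)$ has dimension exactly $|Q_0|-1$; since $\mathcal F(\beta)$ is a fan covering $\D(\beta)$ and $\dim \D(\beta) = |Q_0|-1$ (because $\beta$ is a Schur root), this is the same as saying the fan is pure of that dimension. Equivalently, I want to rule out the existence of a GIT-cone $\C(\beta)_\alpha$ of dimension $|Q_0|-1$ that is not contained in any larger GIT-cone — i.e. I want to produce, for every $\alpha \in \D(\beta)$ with $\C(\beta)_\alpha$ of full dimension, nothing (that case is already fine), and for every $\alpha$ with $\C(\beta)_\alpha$ of dimension $< |Q_0|-1$, a strictly larger GIT-cone containing it. So the real content is: a GIT-cone of $\D(\beta)$ that is not full-dimensional is always a proper face of some other GIT-cone.

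First I would recall the structure from \cite{DW2} quoted in the excerpt: if $\mathcal F$ is a (nontrivial) face of $\D(\beta)$ and $\alpha$ lies in the relative interior of $\mathcal F$, then $\beta$ has a $\wt(\alpha)$-stable decomposition $\beta = \beta_1 \pp \cdots \pp \beta_l$ and $\mathcal F = \D(\beta_1) \cap \cdots \cap \D(\beta_l)$. The key point is that each $\beta_i$ appearing here is $\wt(\alpha)$-stable, hence a Schur root, so each $\D(\beta_i)$ is a hyperplane section $\{\langle \cdot, \beta_i\rangle = 0\} \cap (\text{inequalities})$ of dimension $|Q_0|-1$, and $\D^0(\beta_i)$ is its relative interior. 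Now take $\alpha \in \D(\beta)$ and let $\mathcal F$ be the smallest face of $\D(\beta)$ containing $\alpha$, so $\alpha \in \relint \mathcal F$. By Lemma \ref{L5}, the GIT-class of $\alpha$ relative to $\beta$ lies in $\mathcal F$, and in fact $\C(\beta)_\alpha \subseteq \mathcal F$ as well (the GIT-cone is the closure of the GIT-class, and $\mathcal F$ is closed). So it suffices to treat $\alpha$ with $\mathcal F = \D(\beta)$ itself — that is, $\alpha \in \D^0(\beta)$ — and show $\C(\beta)_\alpha$ is then full-dimensional; the general case reduces to this by working inside the face, since a face $\mathcal F = \D(\beta_1) \cap \cdots \cap \D(\beta_l)$ is, after passing to a suitable linear subspace, again a domain of semi-invariants with $\C(\beta)_\alpha$ realized as a GIT-cone there, and the faces of $\mathcal F$ of full dimension in $\mathcal F$ are what we need.

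The heart of the matter is therefore: if $\alpha \in \D^0(\beta)$ then $\C(\beta)_\alpha = \D(\beta)$, so it is $(|Q_0|-1)$-dimensional. This should follow because $\alpha \in \D^0(\beta)$ means $\beta$ is $\wt(\alpha)$-stable, so $\rep(Q,\beta)^{ss}_{\wt(\alpha)}$ contains a $\wt(\alpha)$-stable representation of dimension $\beta$; one then checks that for any $\alpha' \in \D(\beta)$ such a representation is automatically $\wt(\alpha')$-semi-stable (it has no proper subrepresentation of dimension vector in the "bad" set, because being $\wt(\alpha)$-stable forces $\langle \alpha, \beta'\rangle < 0$ hence $\beta' \hookrightarrow \beta$ is impossible unless... — here one uses the precise interplay between $\hookrightarrow$ and stability), giving $\rep(Q,\beta)^{ss}_{\wt(\alpha)} \subseteq \rep(Q,\beta)^{ss}_{\wt(\alpha')}$ at the level of the generic representation, and one upgrades this to the full subcategories using that both are abelian closed under extensions and generated by their stable objects. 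Then $\alpha' \in \C(\beta)_\alpha$, so $\C(\beta)_\alpha = \D(\beta)$.

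The main obstacle I anticipate is the reduction in the non-full-dimensional case: making precise that a proper face $\mathcal F$ of $\D(\beta)$, cut out as $\D(\beta_1)\cap\cdots\cap\D(\beta_l)$ with the $\beta_i$ Schur, behaves like a domain of semi-invariants in its own affine span, so that the interior GIT-cone argument applies and produces a GIT-cone of dimension $\dim \mathcal F$ strictly containing any lower-dimensional GIT-cone nested inside it. Handling the bookkeeping of which linear functionals $\langle \cdot, \beta_i \rangle$ become equalities, and confirming that the relative interior of $\mathcal F$ is nonempty and consists of $\wt(\alpha)$-stable vectors for the decomposition in question, is the delicate part; everything else is a direct application of Lemma \ref{L5}, the Derksen–Weyman face description, and King's criterion.
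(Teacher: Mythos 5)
There is a genuine gap, and it sits at what you yourself call the heart of the matter: the claim that $\alpha \in \D^0(\beta)$ implies $\C(\beta)_\alpha = \D(\beta)$ is false. If it were true, every domain $\D(\beta)$ with $\beta$ a Schur root would consist of a single maximal GIT-cone, which contradicts the paper's own Theorem \ref{thm2}: for a tame quiver with at least one non-homogeneous tube, $\D(\delta)$ is genuinely subdivided into the maximal cones $C_I$, each omitting one quasi-simple generator per tube, and any $\alpha \in \D^0(\delta) \cap \relint C_I$ has $\C(\delta)_\alpha = C_I \subsetneq \D(\delta)$. The error in your justification is the passage from ``the generic representation of dimension $\beta$ is $\wt(\alpha')$-semi-stable'' (which is merely the definition of $\alpha' \in \D(\beta)$) to $\rep(Q,\beta)^{ss}_{\wt(\alpha)} \subseteq \rep(Q,\beta)^{ss}_{\wt(\alpha')}$: the left-hand side contains non-generic points, e.g.\ direct sums and extensions of smaller $\wt(\alpha)$-stable representations, and their $\wt(\alpha')$-semi-stability is exactly the nontrivial condition that cuts $\D(\beta)$ down to the proper subcone $\C(\beta)_\alpha$. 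The proposed ``upgrade using closure under extensions'' cannot repair this, because the two subcategories have different stable (simple) objects. The subsequent reduction of the non-interior case to a face $\D(\beta_1)\cap\cdots\cap\D(\beta_l)$ ``behaving like a domain of semi-invariants in its own span'' is also left entirely unsubstantiated, and is not how the paper proceeds.

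The ingredient your argument is missing is semi-continuity of the (semi-)stable loci. The paper's proof takes a GIT-cone $\C(\beta)_{\alpha_0}$ that is maximal, produces a Euclidean neighborhood $\mathcal U_0$ of $\alpha_0$ with $\rep(Q,\beta)^{s}_{\wt(\alpha_0)} \subseteq \rep(Q,\beta)^{s}_{\wt(\alpha)} \subseteq \rep(Q,\beta)^{ss}_{\wt(\alpha)} \subseteq \rep(Q,\beta)^{ss}_{\wt(\alpha_0)}$ for all $\alpha \in \D(\beta)\cap\mathcal U_0$, hence $\C(\beta)_{\alpha_0} \subseteq \C(\beta)_{\alpha}$, hence equality by maximality; so the whole of $\D(\beta)\cap\mathcal U_0$ lies in one GIT-class. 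Lemma \ref{L5} then forces $\alpha_0 \in \D^0(\beta)$ (otherwise a point of $\D^0(\beta)\cap\mathcal U_0$ would be trapped in a proper face), so $\C(\beta)_{\alpha_0}$ contains the nonempty open subset $\D^0(\beta)\cap\mathcal U_0$ of the hyperplane $\{\theta \mid \theta(\beta)=0\}$ and therefore has dimension $|Q_0|-1$. This yields full dimensionality of every maximal cone without ever asserting that such a cone equals $\D(\beta)$.
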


\begin{proof} Since $\beta$ is a Schur root, we know that $\dim_K \D(\beta)=|Q_0|-1$. Now, let $\C(\beta)_{\alpha_0} \in \mathcal F(\beta)$ be a non-zero GIT-cone. From the semi-continuity property of semi-stable loci (see for example \cite{Laza}), we know that there exists an open (with respect to the Euclidean topology) neighborhood $\mathcal U_0$ of $\alpha_0$ in $\QQ^{Q_0}$ such that:
\begin{equation} \label{cont}
\rep(Q,\beta)^s_{\wt(\alpha_0)} \subseteq \rep(Q, \beta)^s_{\wt(\alpha)} \subseteq \rep(Q, \beta)^{ss}_{\wt(\alpha)} \subseteq \rep(Q, \beta)^{ss}_{\wt(\alpha_0)},
\end{equation}
for all $\alpha \in \D(\beta) \cap \mathcal U_0$. In particular, we get that $\C(\beta)_{\alpha_0} \subseteq \C(\beta)_{\alpha}$ and hence $\C(\beta)_{\alpha_0}$ is a face of $\C(\beta)_{\alpha}$ for all $\alpha \in \D(\beta) \cap \mathcal U_0$ by Theorem \ref{git-fan}. 

Now, let us assume that $\C(\beta)_{\alpha_0}$ is maximal with respect to either dimension or inclusion. Then $\C(\beta)_{\alpha}=\C(\beta)_{\alpha_0}$ for all $\alpha \in \D(\beta) \cap \mathcal U_0$, i.e. $\alpha_0$ and $\alpha$ belong to the same GIT-class relative to $\beta$ for all $\alpha \in \D(\beta) \cap \mathcal U_0$.

Next, we claim that $\alpha_0 \in \D^0(\beta)$. Assume this is not the case. Then $\alpha_0$ lies on the boundary of $\D^0(\beta)$ and so we can choose an $\alpha \in \D^0(\beta) \cap \mathcal U_0$; in particular, $\alpha$ and $\alpha_0$ belong to the same GIT-class relative to $\beta$. Now, if $\mathcal F$ is the proper face of $\D(\beta)$ with $\alpha_0 \in \rel \mathcal F$ then $\alpha \in \mathcal F$ by Lemma \ref{L5}. But this is a contradiction since $\alpha$ is in the relative interior of $\D(\beta)$. This proves our claim above.

Finally note $\alpha_0\in \D^0(\beta) \cap \mathcal U_0$ is a non-empty open subset of the hyperplane $\mathbb H(\beta):=\{\theta \in \QQ^{Q_0}\mid \theta(\beta)=0\}$, and so it has dimension $|Q_0|-1$. Since $\C(\beta)_{\alpha_0}$ contains $\D^0(\beta) \cap \mathcal U_0$, we get that $\dim_K \C(\beta)_{\alpha_0} = |Q_0|-1$. Hence, $\mathcal F(\beta)$ is a pure fan of dimension $|Q_0|-1$.
\end{proof}

\begin{remark} The proof above shows that if $\beta$ is a Schur root then a GIT-cone $\C(\beta)_{\alpha_0}$ is maximal with respect to inclusion if and only if it is maximal with respect to dimension; if this is the case, the dimension of $\C(\beta)_{\alpha_0}$ is $|Q_0|-1$. 
\end{remark}

Now, we are ready to prove Theorem \ref{thm1}:

\begin{proof}[Proof of Theorem \ref{thm1}] 
\noindent
$(\Longrightarrow)$ Assume that $\alpha_1 \sim_{\git} \alpha_2$. Let $\mathcal C(\beta)_{\alpha} \in \mathcal I_{\alpha_1}$ where $\beta$ is a Schur root and $\alpha \in \D(\beta)$. Then $\rep(Q,\beta)^{ss}_{\wt(\alpha)} \subseteq \rep(Q, \beta)^{ss}_{\wt(\alpha_1)}$ and, since $\rep(Q, \beta)^{ss}_{\wt(\alpha_1)}=\rep(Q, \beta)^{ss}_{\wt(\alpha_2)}$ as $\alpha_1 \sim_{\git} \alpha_2$, we get that $\alpha_2 \in \mathcal C(\beta)_{\alpha}$; hence, $\C(\beta)_{\alpha} \in \mathcal I_{\alpha_2}$. This proves that $\mathcal I_{\alpha_1} \subseteq \mathcal I_{\alpha_2}$. The other inclusion is proved similarly and so $\mathcal I_{\alpha_1}=\mathcal I_{\alpha_2}$.

\noindent
$(\Longleftarrow)$ Let us assume now that $\mathcal I_{\alpha_1}=\mathcal I_{\alpha_2}$. We want to show that $\rep(Q)^{ss}_{\wt(\alpha_1)}=\rep(Q)^{ss}_{\wt(\alpha_2)}$. For this, we first check that:
\begin{equation} \label{thm1-eqn1}
\rep(Q,\beta)^{ss}_{\wt(\alpha_1)} \subseteq \rep(Q,\beta)^{ss}_{\wt(\alpha_2)},
\end{equation}
for $\wt(\alpha_1)$-stable dimension vectors $\beta$. So, let $\beta$ be a $\wt(\alpha_1)$-stable dimension vector. Then $\C(\beta) _{\alpha_1}\cap \D^0(\beta) \neq \emptyset$ since $\alpha_1$ belongs to this intersection. Furthermore, since $\beta$ is a Schur root, we know that the fan $\mathcal F(\beta)$ is pure of dimension $|Q_0|-1$ by Lemma \ref{pure-fan}. It now follows from \cite[Lemma 5]{Keicher} that: 
$$
\C(\beta)_{\alpha_1}=\bigcap_{\C} \C,
$$
where the union is over all maximal GIT-cones $\C$ of $\mathcal F(\beta)$ with $\alpha_1 \in \C$. Since $\mathcal I_{\alpha_1}=\mathcal{I}_{\alpha_2}$, any such maximal GIT-cone $\C$ contains $\alpha_2$ as well; in particular, $\C$ contains $\C(\beta)_{\alpha_2}$. So, we must have that
$\C(\beta)_{\alpha_2} \subseteq \C(\beta)_{\alpha_1}$, i.e. $\rep(Q,\beta)^{ss}_{\wt(\alpha_1)} \subseteq \rep(Q,\beta)^{ss}_{\wt(\alpha_2)}$. Similarly one checks that the opposite inclusion holds if $\beta$ is $\wt(\alpha_2)$-stable.

Next, let us check that $\rep(Q)^{ss}_{\wt(\alpha_1)} \subseteq \rep(Q)^{ss}_{\wt(\alpha_2)}$. Let $M \in \rep(Q)^{ss}_{\wt(\alpha_1)}$ and consider a Jordan-H{\"o}lder filtration of $M$ in $\rep(Q)^{ss}_{\wt(\alpha_1)}$:
$$
0=M_0<M_1< \ldots <M_n=M,
$$ 
where $M_i/M_{i-1}$ is $\wt(\alpha_1)$-stable for every $1 \leq i \leq n$. Set $\beta_i:=\dv M_i/M_{i-1}$ for all $1 \leq i \leq n$. Then we have that $\rep(Q,\beta_i)^{ss}_{\wt(\alpha_1)} \subseteq \rep(Q,\beta_i)^{ss}_{\wt(\alpha_2)}$ by $(\ref{thm1-eqn1})$. This implies that $M$ is $\wt(\alpha_2)$-semi-stable since $\rep(Q)^{ss}_{\wt(\alpha_2)}$ is closed under extensions. We have just showed that $\rep(Q)^{ss}_{\wt(\alpha_1)} \subseteq \rep(Q)^{ss}_{\wt(\alpha_2)}$. Similarly one proves that $\rep(Q)^{ss}_{\wt(\alpha_2)} \subseteq \rep(Q)^{ss}_{\wt(\alpha_1)}$, and this completes the proof.
\end{proof}

\section{The tame case} \label{tame}
Throughout this section, we assume the quiver $Q$ is tame. We denote by $\delta$ the unique isotropic Schur root of $Q$. Let $\tau$ be the Auslander-Reiten translate of $Q$. 

\subsection{Regular representations of tame quivers} For $V$ indecomposable, we say $V$ is 
\begin{itemize}
\item {\bf preprojective} if $\tau^m(V) = 0$ for some $m \gg 0$ 
\item {\bf preinjective} if $\tau^{-m}(V) = 0$ for some $m \gg 0$ 
\item {\bf regular} otherwise, that is, if $\tau^m(V) \neq 0$ for all integers $m$. 

\end{itemize}
 
For arbitrary $V$, we say that $V$ is preprojective, preinjective, or regular (respectively) if all of its indecomposable direct summands are. 

\begin{prop} \cite{CB} If $V$ is indecomposable, then $V$ is preprojective, regular, or preinjective if $\< \delta, \dv V\>$ is negative, 0, or positive (respectively). \end{prop}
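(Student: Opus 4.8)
The plan is to argue via the \emph{defect} $\partial(V):=\<\delta,\dv V\>$, using a handful of standard structural facts about tame quivers coming from the Dlab--Ringel theory (see \cite{CB}): every indecomposable $V$ is preprojective (of the form $\tau^{-m}P_x$, $m\geq 0$), preinjective (of the form $\tau^m I_x$, $m\geq 0$), or regular, in which case $V$ lies in a unique stable tube; $\operatorname{reg}(Q)$ is an abelian subcategory whose indecomposables are partitioned into tubes, with $\Hom_Q$ and $\Ext^1_Q$ vanishing between modules in distinct tubes; infinitely many tubes are homogeneous, and the regular simple $R_\lambda$ of a homogeneous tube satisfies $\dv R_\lambda=\delta$ and $\tau R_\lambda\cong R_\lambda$; and $\delta$ is sincere and spans the radical of the symmetrized Euler form, so $\<\delta,\alpha\>=-\<\alpha,\delta\>$ for all $\alpha$. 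Since $\dv$ is additive on short exact sequences and $\<\delta,-\>$ is linear, $\partial$ is additive on short exact sequences; and since $\dv\tau V=\Phi\dv V$ for $V$ indecomposable non-projective, where the Coxeter transformation $\Phi$ preserves the Euler form and fixes $\delta$, we get $\partial(\tau V)=\<\delta,\Phi\dv V\>=\<\Phi^{-1}\delta,\dv V\>=\partial(V)$, and dually $\partial(\tau^- V)=\partial(V)$ for $V$ indecomposable non-injective. As $Q$ is of infinite representation type, no preprojective indecomposable is injective and no preinjective indecomposable is projective, so $\partial$ is constant along the $\tau$-orbit within each of these two components.

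For $V=\tau^{-m}P_x$ preprojective we therefore get $\partial(V)=\partial(P_x)=\<\delta,\gamma_x\>$. From $\Hom_Q(P_x,M)\cong M(x)$ and $\Ext^1_Q(P_x,M)=0$ we have $\<\gamma_x,\beta\>=\beta(x)$ for every $\beta$, hence $\partial(V)=\<\delta,\gamma_x\>=-\<\gamma_x,\delta\>=-\delta(x)<0$, the last inequality because $\delta$ is sincere. Dually, from $\dim_K\Hom_Q(M,I_x)=\dim_K M(x)$ and $\Ext^1_Q(M,I_x)=0$ we have $\<\beta,\dv I_x\>=\beta(x)$, so for $V=\tau^m I_x$ preinjective, $\partial(V)=\partial(I_x)=\<\delta,\dv I_x\>=\delta(x)>0$.

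For $V$ indecomposable regular, say lying in the tube $\mathcal T_\mu$, pick a homogeneous tube $\mathcal T_\lambda$ with $\lambda\neq\mu$. Since $V$ and $R_\lambda$ lie in distinct tubes, and likewise $V$ and $\tau R_\lambda$, we get $\Hom_Q(R_\lambda,V)=0$ and $\Hom_Q(V,\tau R_\lambda)=0$; the Auslander--Reiten formula then gives $\Ext^1_Q(R_\lambda,V)\cong\Du\Hom_Q(V,\tau R_\lambda)=0$. Hence
$$\partial(V)=\<\delta,\dv V\>=\<\dv R_\lambda,\dv V\>=\dim_K\Hom_Q(R_\lambda,V)-\dim_K\Ext^1_Q(R_\lambda,V)=0.$$
Since every indecomposable $V$ lies in exactly one of the three classes and the conditions $\partial(V)<0$, $\partial(V)=0$, $\partial(V)>0$ are mutually exclusive, the three implications just proved are in fact equivalences, which is the statement.

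The part I expect to need the most care is the $\tau$-invariance of the defect and the resulting reduction of the preprojective and preinjective cases to $P_x$ and $I_x$: concretely, one must invoke that the Coxeter transformation fixes the null root $\delta$, and that iterating $\tau^{\pm}$ along these orbits never leaves the preprojective (resp.\ preinjective) component, so that the invariance applies at every step. Once that is in place, the rest consists of routine identities for the Euler form together with the basic tube structure of $\operatorname{reg}(Q)$.
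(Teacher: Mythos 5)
Your proof is correct. Note that the paper itself offers no proof of this proposition --- it is quoted as a known result with a citation to Crawley-Boevey's lecture notes --- so there is nothing internal to compare against; your argument is essentially the standard one from that source. The three ingredients you use (invariance of the defect $\<\delta,-\>$ under $\tau^{\pm}$ because the Coxeter transformation preserves the Euler form and fixes $\delta$; the reduction of the preprojective and preinjective cases to $\<\delta,\gamma_x\>=-\delta(x)<0$ and $\<\delta,\dv I_x\>=\delta(x)>0$ via sincerity of $\delta$ and the radical property $\<\delta,\cdot\>=-\<\cdot,\delta\>$; and the vanishing of $\Hom$ and $\Ext^1$ from a homogeneous regular simple $R_\lambda$ in a different tube to handle the regular case) are all correctly deployed, and the final trichotomy step correctly upgrades the three implications to the stated equivalences. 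You are also right to flag the only delicate point: $\dv\tau^{-}W=\Phi^{-1}\dv W$ requires $W$ non-injective at each step of the orbit, which holds because no preprojective indecomposable of a representation-infinite hereditary algebra is injective.
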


\begin{prop} \cite{CB} \label{prepro} Let $V$ and $W$ be indecomposable. 

\begin{enumerate}
\item If $W$ is preprojective and $V$ is not, then $\Hom_Q(V,W) = 0$ and $\Ext_Q^1(W,V) = 0$.
\item If $W$ is preinjective and $V$ is not, then $\Hom_Q(W,V) = 0$ and $\Ext_Q^1(V,W) = 0$. 
\end{enumerate}
\end{prop}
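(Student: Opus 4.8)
\emph{Proof strategy.} The plan is to reduce the whole statement to the Auslander--Reiten formula quoted above together with the hereditary property of $\rep(Q)$, and then to run an induction along $\tau$-orbits. Part $(2)$ follows from part $(1)$ by applying the standard $K$-duality $D\colon \rep(Q)\to\rep(Q^{\mathrm{op}})$: it is a contravariant equivalence sending projectives to injectives, interchanging preprojective and preinjective indecomposables, and satisfying $\Hom_Q(A,B)\cong\Hom_{Q^{\mathrm{op}}}(DB,DA)$ and $\Ext^1_Q(A,B)\cong\Ext^1_{Q^{\mathrm{op}}}(DB,DA)$; since $Q^{\mathrm{op}}$ is again tame, $(2)$ for $Q$ is exactly $(1)$ for $Q^{\mathrm{op}}$ read through $D$. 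So I will only treat $(1)$.

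\emph{Step 1: the base case.} First I would prove: if $V$ is indecomposable and not preprojective and $P$ is an indecomposable projective, then $\Hom_Q(V,P)=0$. Given a nonzero $f\colon V\to P$, its image is a nonzero submodule of the projective $P$, hence projective because $\rep(Q)$ is hereditary, hence it has an indecomposable projective summand $P'$; the induced surjection $V\twoheadrightarrow P'$ splits (as $P'$ is projective), so $P'$ is a direct summand of the indecomposable $V$, forcing $V\cong P'$. But $P'$ is preprojective, a contradiction.

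\emph{Step 2: shifting along $\tau$, and the induction.} Next I would record that for indecomposable non-projective $V,W$ one has $\Hom_Q(V,W)\cong\Hom_Q(\tau V,\tau W)$: this is immediate from the isomorphism $\Hom_Q(\tau^-A,B)\cong\Hom_Q(A,\tau B)$ with $A=\tau V$, $B=W$, using $\tau^-\tau V\cong V$. (Here tameness enters: by the previous proposition the sign of $\langle\delta,\dv V\rangle$ separates the preprojective, regular and preinjective indecomposables, so these three classes are pairwise disjoint; in particular no preprojective indecomposable is injective and no preinjective one is projective, whence $\tau$ restricts to a bijection between non-projective and non-injective indecomposables preserving each class, giving $\tau^-\tau V\cong V$ and $\tau V$ indecomposable and not preprojective whenever $V$ is.) Now write a preprojective indecomposable $W$ as $\tau^{-m}P$ with $P$ an indecomposable projective and $m\ge 0$, and induct on $m$ to prove $\Hom_Q(V,\tau^{-m}P)=0$ for every indecomposable $V$ that is not preprojective. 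The case $m=0$ is Step~1. For $m\ge 1$ the module $V$ is non-projective (else it would be preprojective) and so is $\tau^{-m}P$, hence $\Hom_Q(V,\tau^{-m}P)\cong\Hom_Q(\tau V,\tau^{-(m-1)}P)$, which vanishes by the inductive hypothesis applied to $\tau V$. Finally, for the $\Ext$ statement, the Auslander--Reiten formula gives $\Ext^1_Q(W,V)\cong\Du\Hom_Q(V,\tau W)$; if $W$ is projective then $\tau W=0$ and we are done, and otherwise $\tau W$ is again a preprojective indecomposable, so $\Hom_Q(V,\tau W)=0$ by the Hom part already proved.

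\emph{Main obstacle.} Everything after the base case is essentially bookkeeping with $\tau$; the one place that needs a genuine argument is Step~1 (equivalently its dual), and what makes it work is precisely that $\rep(Q)$ is hereditary -- so that submodules of projectives are projective -- together with the splitting of any surjection onto a projective. I would also be careful to invoke tameness only where it is actually used, namely to ensure that $\tau$ and $\tau^-$ are mutually inverse bijections on the relevant indecomposables and preserve the preprojective/regular/preinjective trichotomy, rather than appealing to finer structural facts about the Auslander--Reiten quiver.
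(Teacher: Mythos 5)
Your argument is correct, but note that the paper does not prove this proposition at all: it is quoted verbatim from Crawley-Boevey's lecture notes \cite{CB}, so there is no in-paper proof to compare against. Your reduction of $(2)$ to $(1)$ via $K$-duality, the base case via ``submodules of projectives are projective in a hereditary category, and surjections onto projectives split,'' the induction writing $W\cong\tau^{-m}P$ and shifting with $\Hom_Q(\tau^-A,B)\cong\Hom_Q(A,\tau B)$, and the final conversion of the $\Ext$ claim via $\Ext^1_Q(W,V)\cong\Du\Hom_Q(V,\tau W)$ is exactly the standard proof one finds in \cite{CB}. One small remark: tameness is not actually needed anywhere, despite your closing caveat. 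The facts you attribute to it --- that $\tau^-\tau V\cong V$ for non-projective indecomposable $V$, and that $\tau V$ is again indecomposable and not preprojective when $V$ is not preprojective (if $\tau^m(\tau V)=0$ then $\tau^{m+1}V=0$) --- hold for any acyclic quiver, and the trichotomy via the sign of $\langle\delta,\dv V\rangle$ plays no role in your argument. So your proof in fact establishes the proposition for arbitrary acyclic quivers, which is the natural level of generality.
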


\begin{prop} \cite{CB} The set of all regular representations of $Q$, denoted by $\mathcal{R}eg$, is an extension closed, abelian subcategory of $\rep(Q)$, and is equal to $\rep(Q)_{\wt(\delta)}^{ss}$. \end{prop}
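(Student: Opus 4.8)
The plan is to establish the identity $\mathcal{R}eg = \rep(Q)^{ss}_{\wt(\delta)}$ first, and then to read off the remaining assertions from the general properties of $\rep(Q)^{ss}_{\theta}$ recorded in Section~\ref{background-sec}: for any $\theta \in \QQ^{Q_0}$, the full subcategory $\rep(Q)^{ss}_{\theta}$ is abelian and closed under extensions. The inputs I would use for the identity are the proposition above identifying, for an indecomposable $V$, the class (preprojective / regular / preinjective) of $V$ with the sign (negative / zero / positive) of $\langle \delta, \dv V \rangle$, together with Proposition~\ref{prepro}. I would also use the elementary observation that $\rep(Q)^{ss}_{\theta}$ is closed under direct summands: if $V \oplus W \in \rep(Q)^{ss}_{\theta}$, then $\theta(\dv V), \theta(\dv W) \le 0$ (since $V$ and $W$ are subrepresentations of $V \oplus W$) while $\theta(\dv V) + \theta(\dv W) = 0$, so both vanish, and every subrepresentation of $V$ is one of $V \oplus W$.

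For the inclusion $\mathcal{R}eg \subseteq \rep(Q)^{ss}_{\wt(\delta)}$, I would take $V$ regular, write $V = \bigoplus_i V_i$ with each $V_i$ indecomposable regular, so $\langle \delta, \dv V_i \rangle = 0$ and hence $\wt(\delta)(\dv V) = \langle \delta, \dv V \rangle = 0$. Given a subrepresentation $V' \le V$, decompose $V' = \bigoplus_j W_j$ into indecomposables; the key step is that no $W_j$ can be preinjective. Indeed, a split monomorphism $W_j \to V'$ followed by $V' \hookrightarrow V$ is a nonzero (injective) map into $V = \bigoplus_i V_i$, hence has a nonzero component $W_j \to V_i$ for some $i$, so $\Hom_Q(W_j, V_i) \neq 0$ with $W_j$ preinjective and $V_i$ regular (in particular not preinjective), contradicting Proposition~\ref{prepro}$(2)$. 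Thus each $W_j$ is preprojective or regular, giving $\langle \delta, \dv W_j \rangle \le 0$, and therefore $\wt(\delta)(\dv V') = \langle \delta, \dv V' \rangle \le 0$. Hence $V$ is $\wt(\delta)$-semi-stable.

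For the reverse inclusion, let $V \in \rep(Q)^{ss}_{\wt(\delta)}$. By closure under direct summands, every indecomposable summand $V_i$ of $V$ again lies in $\rep(Q)^{ss}_{\wt(\delta)}$, so $\langle \delta, \dv V_i \rangle = \wt(\delta)(\dv V_i) = 0$, and hence $V_i$ is regular by the proposition above; therefore $V$ is regular. This proves $\mathcal{R}eg = \rep(Q)^{ss}_{\wt(\delta)}$, and then the general fact that $\rep(Q)^{ss}_{\wt(\delta)}$ is an extension-closed abelian subcategory of $\rep(Q)$ yields the rest. The only point I expect to require genuine care is the step excluding preinjective direct summands of a subrepresentation of a regular representation, i.e.\ keeping the decompositions of $V'$ and of $V$ straight when invoking Proposition~\ref{prepro}$(2)$; everything else is routine bookkeeping with the Euler form and the definition of $\wt(\delta)$-semi-stability.
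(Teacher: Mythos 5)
Your argument is correct, but note that the paper gives no proof of this proposition at all: it is quoted verbatim from Crawley-Boevey's notes \cite{CB}, so what you have produced is a self-contained derivation where the authors simply cite. Your route is the natural one given the other \cite{CB} facts recorded in the paper: you reduce everything to the sign trichotomy for $\langle \delta, \dv V\rangle$ on indecomposables and to Proposition \ref{prepro}(2), with the only nontrivial step being the exclusion of preinjective summands from subrepresentations of a regular module, which you handle correctly via a nonzero component $W_j \to V_i$ of the composite inclusion. Two small points are worth making explicit. First, the trichotomy proposition is stated in the paper only in the direction ``sign of $\langle\delta,\dv V\rangle$ determines the class,'' while your forward inclusion uses the converse (``regular implies $\langle\delta,\dv V\rangle = 0$''); this is harmless because the three classes partition the indecomposables of a tame quiver and the three sign conditions are mutually exclusive, but you should say so. Second, your closure-under-direct-summands observation is exactly the content of the paper's Lemma \ref{directsum} (that $\Omega(V\oplus W) = \Omega(V)\cap\Omega(W)$), so you could cite that instead of reproving it. What your approach buys over the citation is that the equality $\mathcal{R}eg = \rep(Q)^{ss}_{\wt(\delta)}$ is seen to follow formally from the homological separation of the preprojective/regular/preinjective classes, with the abelian and extension-closed properties then inherited for free from the general structure of $\rep(Q)^{ss}_{\theta}$ recalled in Section \ref{dom}.
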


A representation $V \in \mathcal{R}eg$ which is simple in this subcategory (that is, a $\wt(\delta)$-stable representation) will be called {\bf regular simple}. We will also call $\dv V$ regular simple where no ambiguity will result. 

\begin{prop}  \label{regsim} \cite{CB} Suppose $V$ is regular simple. 
\begin{enumerate}
\item $\tau V$ is also regular simple. 
\item $V$ is Schur, thus $\dv V$ is a Schur root. 
\item $\tau V \cong V$ if and only if $\dv V = \delta$. 
\item There exists $p \in \nat$ such that $\tau^p V \cong V$. $p$ is called the period of $V$. 
\item If $V$ has period $p$, then $\dv V + \dv \tau V + \cdots + \dv \tau^{p-1}V = \delta$.
\end{enumerate}
\end{prop}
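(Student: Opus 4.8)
The plan is to exploit two structural inputs for a tame quiver $Q$: that $\tau$ restricts to an autoequivalence of the abelian category $\mathcal{R}eg=\rep(Q)^{ss}_{\wt(\delta)}$, hence to an \emph{exact} autoequivalence (any equivalence of abelian categories is exact); and that the Tits form $q$ of $Q$ is positive semidefinite with $\rad(q)=\QQ\delta$, with $\delta$ primitive. Throughout I would use the Auslander--Reiten formula in the form $\Ext^1_Q(X,Y)\cong\Du\Hom_Q(Y,\tau X)$ together with $\langle\dv X,\dv Y\rangle=\dim_K\Hom_Q(X,Y)-\dim_K\Ext^1_Q(X,Y)$. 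Parts (1) and (2) are then immediate: $\tau$, being an exact autoequivalence of $\mathcal{R}eg$, preserves indecomposability and simplicity, so $\tau V$ is again regular simple; and $\End_Q(V)=\End_{\mathcal{R}eg}(V)$ is a finite-dimensional division algebra over $K$ by Schur's lemma in the abelian category $\mathcal{R}eg$, hence equals $K$, so $V$ is Schur and $\dv V$ a Schur root. For part (3), the Auslander--Reiten formula gives $\dim_K\Ext^1_Q(V,V)=\dim_K\Hom_Q(V,\tau V)$; if $\tau V\cong V$ this equals $\dim_K\End_Q(V)=1$, so $q(\dv V)=0$, making $\dv V$ an isotropic Schur root, hence $\dv V=\delta$ by uniqueness; conversely if $\dv V=\delta$ then $q(\dv V)=0$ forces $\Hom_Q(V,\tau V)\neq 0$, and a nonzero morphism between the simple objects $V$ and $\tau V$ of $\mathcal{R}eg$ is an isomorphism.

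For part (4) I would argue by contradiction. If $\tau^mV\not\cong V$ for every $m\geq 1$, then applying $\tau^{-i}$ shows that the representations $\tau^iV$, $i\in\ZZ$, are pairwise non-isomorphic regular simples. As each is Schur, $\Hom_Q(\tau^iV,\tau^jV)=0$ for $i\neq j$, while the Auslander--Reiten formula gives $\Ext^1_Q(\tau^iV,\tau^jV)\cong\Du\Hom_Q(\tau^jV,\tau^{i+1}V)$, one-dimensional if $j=i+1$ and zero otherwise. Hence for every $n$ the Gram matrix of $\dv V,\dv\tau V,\dots,\dv\tau^{n-1}V$ with respect to the symmetrized Euler form $(\alpha,\beta):=\langle\alpha,\beta\rangle+\langle\beta,\alpha\rangle$ is the $n\times n$ tridiagonal matrix with $2$'s on the diagonal and $-1$'s on the sub- and superdiagonals, i.e.\ the Cartan matrix of type $A_n$, which is positive definite. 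Since $(\alpha,\alpha)=2q(\alpha)$ is only positive semidefinite, and vectors with a positive-definite Gram matrix are linearly independent, this would produce $n$ linearly independent vectors in $\QQ^{Q_0}$ for every $n$, which is impossible. So $\tau^pV\cong V$ for some $p\geq 1$; take $p$ minimal.

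For part (5) put $W:=\bigoplus_{i=0}^{p-1}\tau^iV$ and $\beta:=\dv W=\sum_{i=0}^{p-1}\dv\tau^iV$. By minimality of $p$, the summands $\tau^0V,\dots,\tau^{p-1}V$ are pairwise non-isomorphic Schur objects, so $\dim_K\End_Q(W)=p$; and since $\tau W\cong W$, the Auslander--Reiten formula gives $\dim_K\Ext^1_Q(W,W)=\dim_K\Hom_Q(W,\tau W)=\dim_K\Hom_Q(W,W)=p$. Therefore $q(\beta)=\langle\beta,\beta\rangle=0$, and since $q$ is positive semidefinite with $\rad(q)=\QQ\delta$ and $\delta$ is primitive, $\beta=m\delta$ for some integer $m\geq 1$. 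It remains to show $m=1$, and here I would invoke the uniserial (tube) structure of $\mathcal{R}eg$: there is a uniserial indecomposable $R\in\mathcal{R}eg$ with regular socle $V$ and regular length $p$, obtained by $p$ successive non-split extensions, whose regular composition factors from the bottom are $V,\tau^{-1}V,\dots,\tau^{-(p-1)}V$, so that $\dv R=\sum_{j=0}^{p-1}\dv\tau^{-j}V=\beta$. Any nonzero endomorphism $\varphi$ of $R$ has image that is simultaneously a subobject and a quotient of $R$; by uniseriality $\Ima\varphi$ is the unique subobject of its regular length $\ell$, with regular socle $V$, and also the quotient of $R$ of regular length $\ell$, whose regular socle is $\tau^{-(p-\ell)}V$; matching socles forces $\tau^{p-\ell}V\cong V$, hence $p\mid\ell$ by minimality of $p$, so $\ell=0$ or $\ell=p$, i.e.\ $\varphi=0$ or $\varphi$ is an isomorphism. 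Thus $\End_Q(R)=K$, so $\beta=\dv R$ is an isotropic Schur root and $\beta=\delta$ by uniqueness, giving $m=1$.

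The genuinely substantive point is the final step of part (5): passing from $\beta=m\delta$ to $\beta=\delta$. Showing $\beta$ is an isotropic positive vector, hence a positive multiple of $\delta$, is routine; ruling out $m\geq 2$ needs the uniserial structure of the regular component (equivalently, that the indecomposable regular module of regular length equal to the period is a brick). Everything else rests only on $\tau$ being an exact autoequivalence of $\mathcal{R}eg$, Schur's lemma, the Auslander--Reiten formula, and the incompatibility of the positive-definite $A_n$ Cartan matrix with the positive-semidefinite Tits form of $Q$.
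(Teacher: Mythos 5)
The paper gives no proof of this proposition --- it is quoted wholesale from Crawley--Boevey's notes \cite{CB} --- so there is no internal argument to measure yours against; judged on its own, your proof is correct and follows the standard development. Parts (1)--(3) are exactly the usual argument (exactness of the autoequivalence $\tau$ on $\mathcal{R}eg$, Schur's lemma over an algebraically closed field, and $\Ext_Q^1(V,V)\cong \Du\Hom_Q(V,\tau V)$ combined with uniqueness of the isotropic Schur root). Your part (4) is clean, though the appeal to positive semidefiniteness is superfluous there: nonsingularity of the $A_n$ Cartan matrix already forces $\dv V,\dv\tau V,\dots,\dv\tau^{n-1}V$ to be linearly independent, which is all you use. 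In part (5), the step $q(\beta)=0\Rightarrow\beta\in\rad(q)=\ZZ\delta$ genuinely does use positive semidefiniteness (via $q(\beta+t\gamma)\geq 0$ for all $t$), and your socle-matching argument that the regular uniserial module $R$ of regular length $p$ with socle $V$ is a brick, hence that $\beta=\dv R$ is an isotropic Schur root and so equals $\delta$, is correct. The one thing to make explicit is the logical ordering: your $m=1$ step invokes the tube structure, i.e.\ Proposition \ref{regsim2}, which the paper happens to list \emph{after} Proposition \ref{regsim}. This is not circular --- the uniserial structure rests only on parts (1)--(3), via $\dim_K\Ext_Q^1(S,T)=\dim_K\Hom_Q(T,\tau S)\leq 1$ for regular simples $S,T$ --- but since both propositions are only cited in the paper, you should state that dependence rather than leave it implicit.
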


\begin{prop} \label{regsim2} \cite{CB} Every indecomposable regular representation $V$ has a unique filtration 
$$0 =V_0 < V_1 < \cdots < V_{r-1} < V_r=V$$
such that $V_i/V_{i-1}$ is regular simple. This is called a {\bf uniserial filtration}. Furthermore, given a regular simple $V_1$ and a length $r$, there is a unique indecomposable regular representation $V$ with regular socle $V_1$ and regular length $r$. It's composition factors, from the socle, are $V_1$, $\tau^- V_1, \tau^{-2}V_1, \ldots, \tau^{1-r}V_1$. 
\end{prop}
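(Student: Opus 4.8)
The plan is to prove Proposition \ref{regsim2} by combining the standard structure theory of the regular part $\mathcal{R}eg$ of a tame quiver with the facts already established in Propositions \ref{regsim} and the preceding statements. The category $\mathcal{R}eg$ is abelian, extension-closed, and decomposes as a direct sum of ``tubes'', each of which is a uniserial category; the key structural input is that in a uniserial category every indecomposable object has a unique composition series. I would first recall (citing \cite{CB}, or \cite{DR} for the Dlab--Ringel treatment of tame hereditary algebras) that $\tau$ acts on the regular simples in each tube by cyclically permuting them with period $p$ (Proposition \ref{regsim}(4)), and that $\mathrm{Ext}^1_Q(S, S')\neq 0$ for regular simples $S,S'$ in the same tube precisely when $S' \cong \tau S$ (this is the Auslander--Reiten formula $\mathrm{Ext}^1_Q(S,S')\cong \Du\Hom_Q(S',\tau S)$ together with Schurianness of regular simples, Proposition \ref{regsim}(2)). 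This controls which extensions of regular simples are nonsplit and forces the ``staircase'' shape of the composition factors.

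Next I would establish existence and uniqueness of the uniserial filtration. Given an indecomposable regular $V$ of regular length $r$, I would take its regular socle $V_1$ (the largest semisimple subobject in $\mathcal{R}eg$), which is a single regular simple: if it had two non-isomorphic simple summands, or two copies of one simple, one can split off a summand or build a non-indecomposable, contradicting indecomposability — this is where uniseriality of the tube is invoked. Then $V/V_1$ is again indecomposable regular of regular length $r-1$ (indecomposability passes to the quotient in a uniserial category), so by induction on $r$ it has a unique uniserial filtration, which pulls back to a unique uniserial filtration of $V$. The claim that the composition factors read off from the socle are $V_1, \tau^-V_1, \tau^{-2}V_1,\dots,\tau^{1-r}V_1$ follows because at each stage the socle of $V/V_{i-1}$ must be the unique regular simple $S$ with $\mathrm{Ext}^1_Q(S, V_{i-1})\ne 0$ where the extension does not split off; using the uniserial structure and $\mathrm{Ext}^1_Q(\tau^- S, S)\ne 0$, that simple is exactly $\tau^{-1}$ of the previous top factor, giving the stated list inductively.

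For the last assertion — given a regular simple $S$ and a length $r$ there is a unique indecomposable regular representation with regular socle $S$ and regular length $r$ — I would argue by induction on $r$. For $r=1$ it is $S$ itself. For the inductive step, suppose $W$ is the unique indecomposable regular with socle $S$ and length $r-1$; its regular top is $\tau^{2-r}S$ by the previous paragraph. Any indecomposable regular $V$ of length $r$ with socle $S$ contains a subrepresentation isomorphic to $W$ (namely $V_{r-1}$ from the uniserial filtration, which has socle $S$ and length $r-1$, hence is $\cong W$ by induction), and $V/W$ is the regular simple $\tau^{1-r}S$; so $V$ is a nonsplit extension of $\tau^{1-r}S$ by $W$. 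Since $W$ is Schurian in $\mathcal{R}eg$ with top $\tau^{2-r}S$, one computes $\dim_K \mathrm{Ext}^1_Q(\tau^{1-r}S, W) = 1$ using the Auslander--Reiten formula $\mathrm{Ext}^1_Q(\tau^{1-r}S,W)\cong \Du\Hom_Q(W,\tau^{2-r}S)$ and the fact that a hom from $W$ to the regular simple $\tau^{2-r}S$ must factor through the top, giving a one-dimensional space. Hence the nonsplit extension is unique up to isomorphism, and one checks it is indecomposable (a decomposition would split off $\tau^{1-r}S$ or $W$, contradicting nonsplitness and that both have simple socle/top meeting $S$). This yields uniqueness and existence.

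The main obstacle I anticipate is carefully justifying the ``uniseriality'' claims — that the regular socle and regular top of an indecomposable regular are simple, and that subobjects and quotients of indecomposables in a tube remain indecomposable — without simply quoting the tube classification wholesale. The cleanest route is probably to lean directly on the $\dim\mathrm{Ext}^1 = 1$ computation between consecutive regular simples and induct, rather than to reconstruct the abelian-category structure of tubes from scratch; since \cite{CB} is already being cited for the surrounding propositions, I expect it is legitimate here to cite it (or \cite{R}, \cite{DR}) for the tube structure and then give the extension-counting argument above for the uniqueness statements, which is the genuinely new content needed for the later sections.
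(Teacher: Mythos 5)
The paper does not prove this proposition: it is quoted verbatim from Crawley-Boevey's notes \cite{CB}, like the surrounding structure results on regular representations, so there is no in-paper argument to compare yours against. Judged on its own terms, your sketch follows the standard textbook route (Crawley-Boevey, Ringel, Dlab--Ringel) and the two genuinely computational ingredients are correct: the Auslander--Reiten formula gives $\Ext^1_Q(S,S')\cong \Du\Hom_Q(S',\tau S)$, so for regular simples $\Ext^1_Q(S,S')\neq 0$ iff $S'\cong \tau S$ and is then one-dimensional, which forces the composition factors to read $V_1,\tau^-V_1,\dots,\tau^{1-r}V_1$; and the count $\dim_K\Ext^1_Q(\tau^{1-r}S,W)=1$ (via factoring $\Hom_Q(W,\tau^{2-r}S)$ through the regular top) correctly yields existence and uniqueness of the length-$r$ indecomposable with prescribed socle, with your splitting argument for indecomposability of the middle term being the right one.

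The one step that is not actually proved is the foundational one: that the regular socle of an indecomposable regular representation is simple and that quotients of indecomposables in a tube stay indecomposable. As written, your justification (``this is where uniseriality of the tube is invoked'') is circular, since uniseriality of the tube is precisely what the first half of the proposition asserts. The missing lemma is that a hereditary length category whose $\Ext$-quiver on simples is a cyclically oriented cycle with all $\Ext^1$ one-dimensional is uniserial; this is where the real work lives, and your proposal ultimately defers it to \cite{CB}. Since the paper itself cites \cite{CB} for the entire statement, that deferral is consistent with the paper's level of rigor, but you should be explicit that this is what you are doing rather than presenting it as a consequence of indecomposability alone.
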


Now, we can organize all indecomposable regular representations into groups, called {\bf tubes}, in the following way: Begin with a regular simple representation $V$ and set $\mathcal{V} = \{V, \tau V, \ldots, \tau^{p-1} V\}$, the $\tau$-orbit of $V$. Then, include in the tube any indecomposable regular representation whose composition factors are taken from $\mathcal{V}$. We say that $p$ is the period of the tube.

\begin{prop} \cite{CB} Every regular indecomposable representation belongs to a unique tube. \end{prop}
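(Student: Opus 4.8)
The statement to be proved is: every regular indecomposable representation of a tame quiver belongs to a unique tube. The plan is to combine the structural facts already established in Propositions \ref{regsim} and \ref{regsim2} about regular simples and uniserial filtrations. First I would let $V$ be an indecomposable regular representation and invoke Proposition \ref{regsim2}: $V$ has a unique uniserial filtration $0=V_0<V_1<\cdots<V_r=V$ with each $V_i/V_{i-1}$ regular simple, and in fact the composition factors read off from the socle are $V_1,\tau^-V_1,\ldots,\tau^{1-r}V_1$ for a uniquely determined regular simple $V_1$. Thus the \emph{set} of regular simple composition factors of $V$ is exactly $\{\tau^{-k}V_1 \mid 0\le k\le r-1\}$, which lies in the $\tau$-orbit of $V_1$. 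By Proposition \ref{regsim}(4) this $\tau$-orbit is finite, say of size $p$, and equals $\mathcal V = \{V_1,\tau V_1,\ldots,\tau^{p-1}V_1\}$ (since $\tau^{-k}V_1 = \tau^{p-k}V_1$). By the very definition of the tube built from $V_1$ — begin with $\mathcal V$ and adjoin all indecomposable regular representations whose composition factors lie in $\mathcal V$ — the representation $V$ belongs to that tube. This establishes existence.

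For uniqueness, suppose $V$ lies in two tubes, say the one generated by the regular simple $W_1$ (with $\tau$-orbit $\mathcal W$) and the one generated by the regular simple $W_1'$ (with $\tau$-orbit $\mathcal W'$). By the definition of a tube, the composition factors of $V$ in $\Reg$ are drawn from $\mathcal W$, and also from $\mathcal W'$; in particular $V_1$, which is a genuine composition factor of $V$ appearing in its unique uniserial filtration, lies in both $\mathcal W$ and $\mathcal W'$. Since $\mathcal W$ is a full $\tau$-orbit containing $V_1$, we get $\mathcal W = \{\tau^k V_1 \mid k\in\ZZ\}=\mathcal W'$, so the two $\tau$-orbits coincide. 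Two regular simples generate the same tube precisely when their $\tau$-orbits agree, since the tube is by construction determined by the orbit $\mathcal V$; hence the two tubes are identical.

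The one point requiring a little care — and the only place where something might go wrong — is making sure that the uniserial filtration of Proposition \ref{regsim2} really does exhibit \emph{all} the regular-simple composition factors of $V$ and that they genuinely form a single $\tau$-orbit segment rather than an unrelated collection; this is exactly what the second half of Proposition \ref{regsim2} asserts (composition factors from the socle are $V_1,\tau^-V_1,\ldots,\tau^{1-r}V_1$), so the obstacle is really just one of bookkeeping: identifying $\tau^{-k}V_1$ with $\tau^{p-k}V_1$ via the period $p$ from Proposition \ref{regsim}(4), and observing that "composition factors in $\mathcal V$" in the definition of a tube is an orbit-invariant condition. Once these identifications are in place, both existence and uniqueness follow formally, so I expect the proof to be short.
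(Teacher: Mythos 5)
Your argument is correct, but note that the paper itself offers no proof of this proposition: it is quoted verbatim from Crawley-Boevey's notes \cite{CB}, so there is nothing internal to compare against. What you have done is supply a self-contained derivation from the paper's own conventions, and relative to those conventions it works: since the paper \emph{defines} a tube combinatorially (fix a regular simple $V_1$, take its $\tau$-orbit $\mathcal V$, and adjoin every regular indecomposable whose composition factors in $\mathcal{R}eg$ lie in $\mathcal V$), existence is immediate from the uniserial filtration of Proposition \ref{regsim2}, whose factors $V_1,\tau^-V_1,\ldots,\tau^{1-r}V_1$ all lie in a single $\tau$-orbit (finite by Proposition \ref{regsim}(4)), and uniqueness reduces to the fact that distinct $\tau$-orbits of regular simples are disjoint, which you correctly isolate as the only point needing care. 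The trade-off is worth naming: your proof is essentially formal because the hard content has been front-loaded into the definition of ``tube'' and into Propositions \ref{regsim} and \ref{regsim2}; in \cite{CB} the substantive statement is that these combinatorially defined tubes coincide with the connected components of the regular part of the Auslander--Reiten quiver (stable tubes of the form $\ZZ A_\infty$ modulo $\tau^p$), which requires genuinely more work and is not recovered by your argument. As a verification that the quoted statement follows from the other quoted statements under the paper's definitions, your proof is complete.
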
 

As a consequence of Proposition \ref{regsim}, we see that the dimension vectors of the regular simples must be either real Schur roots, or must be equal to $\delta$. There are infinitely many (occurring in families) $\delta$-dimensional indecomposable representations. Each one generates a tube of period 1, that is, a homogeneous tube. Real Schur roots correspond to exceptional representations, so we have finitely many non-homogeneous tubes generated by exceptional representations which are also regular simple. For the purpose of distinguishing from $\delta$, call the dimension vectors of exceptional regular simple representations {\bf quasi-simple}. 

If $\beta$ is the dimension vector of a regular representation of $Q$, we define $\tau \beta$ as $\beta \Phi$ and $\tau^- \beta$ as $\beta \Phi^{-1}$, where $\Phi$ is the Coxeter matrix of $Q$. If  $V$ is regular, then $\dv(\tau V)=\tau \dv V$, and $\dv(\tau^- V)=\tau^{-}\dv V$  (see \cite{BB1}). 

\begin{lemma} If $\beta_1$ and $\beta_2$ are quasi-simple, then 

$$\<\beta_1,\beta_2\> = \left\{ \begin{array}{l} 1 \mbox{ if } \beta_1=\beta_2 \\ -1 \mbox{ if } \beta_2 = \tau \beta_1 \\ 0 \mbox{ otherwise} \end{array} \right. $$

\end{lemma}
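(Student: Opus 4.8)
The plan is to realize the quasi-simple dimension vectors $\beta_1,\beta_2$ by exceptional regular simple representations $V_1,V_2$ with $\dv V_i=\beta_i$, and then to compute $\langle\beta_1,\beta_2\rangle=\dim_K\Hom_Q(V_1,V_2)-\dim_K\Ext^1_Q(V_1,V_2)$ using the homological interpretation of the Euler form together with the Auslander--Reiten formula. The case $\beta_1=\beta_2$ is immediate: since $V_1$ is exceptional we have $\Hom_Q(V_1,V_1)\cong K$ and $\Ext^1_Q(V_1,V_1)=0$, so $\langle\beta_1,\beta_2\rangle=1$.

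Assume now $\beta_1\neq\beta_2$, so $V_1\not\cong V_2$ (isomorphic representations have equal dimension vectors). Since $V_1$ and $V_2$ are simple objects of the abelian category $\mathcal{R}eg$, and both $\Hom$ and $\Ext^1$ may be computed either in $\mathcal{R}eg$ or in $\rep(Q)$ — because $\mathcal{R}eg$ is a full, extension-closed subcategory of the hereditary category $\rep(Q)$ — any nonzero morphism $V_1\to V_2$ would be an isomorphism; hence $\Hom_Q(V_1,V_2)=0$ and $\langle\beta_1,\beta_2\rangle=-\dim_K\Ext^1_Q(V_1,V_2)$. Next I would apply the Auslander--Reiten formula with $W=V_1$ and $V=V_2$ to get $\Ext^1_Q(V_1,V_2)\cong\Du\Hom_Q(V_2,\tau V_1)$, so it remains to compute $\dim_K\Hom_Q(V_2,\tau V_1)$. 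By Proposition \ref{regsim}, $\tau V_1$ is again regular simple, and since $\tau$ restricts to an auto-equivalence of $\mathcal{R}eg$ it is again exceptional, with $\dv(\tau V_1)=\tau\beta_1$. If $\beta_2=\tau\beta_1$, then $V_2$ and $\tau V_1$ are exceptional representations with the same dimension vector, hence isomorphic, so $\Hom_Q(V_2,\tau V_1)\cong K$ and $\langle\beta_1,\beta_2\rangle=-1$; this does not overlap the case $\beta_1=\beta_2$, since $\tau\beta_1=\beta_1$ would force $\dv V_1=\delta$ by Proposition \ref{regsim}, contradicting $q(\beta_1)=1$. Otherwise $\beta_2\neq\tau\beta_1$, so $V_2\not\cong\tau V_1$, and again two non-isomorphic simple objects of $\mathcal{R}eg$ admit no nonzero morphisms between them, giving $\Hom_Q(V_2,\tau V_1)=0$ and $\langle\beta_1,\beta_2\rangle=0$.

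The computation is immediate once the framework is set up, so I anticipate no serious difficulty; the only points that require a little care — and thus the main, minor obstacle — are the standard facts being invoked: that $\Hom_Q$ and $\Ext^1_Q$ restricted to regular modules agree with the corresponding groups computed inside $\mathcal{R}eg$ (so that simplicity in $\mathcal{R}eg$ can be used), that an exceptional representation is determined up to isomorphism by its dimension vector (used to identify $V_2$ with $\tau V_1$, and in the equal-dimension-vector case $V_1$ with $V_2$), and that $\tau$ preserves the property of being exceptional regular simple. Assembling these is routine.
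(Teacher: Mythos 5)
Your proof is correct and follows essentially the same route as the paper: realize the quasi-simples by the unique (exceptional, regular simple) representations, use Schur's lemma in $\mathcal{R}eg$ to kill $\Hom$ between non-isomorphic regular simples, and convert $\Ext^1$ into a $\Hom$ into $\tau V_1$ via the Auslander--Reiten formula. The only differences are cosmetic (you verify the non-overlap of the cases and handle $\beta_1=\beta_2$ via exceptionality rather than via $q(\beta_1)=1$), so there is nothing to change.
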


\begin{proof} Let $V_i$ be the unique indecomposable representation of dimension $\beta_i$ for $i = 1,2$. If $\beta_1 = \beta_2$, then $\<\beta_1,\beta_2\>=q(\beta_1)=1$ since $\beta_1$ is a real Schur root. 

If $\beta_2 = \tau \beta_1$, then since $V_1 \ncong V_2$, and $V_1$ and $V_2$ are simple in $\rep(Q)_{\<\delta, \cdot\>}^{ss}$, we have that $\Hom_Q(V_1,V_2)=0$. So 
$\<\beta_1, \beta_2\> = - \dim_K \Ext_Q^1(V_1,\tau V_1) = - \dim_K \Hom_Q(\tau V_1, \tau V_1) = -1$.

If $\beta_2 \neq \beta_1, \tau\beta_1$, we get $\<\beta_1, \beta_2\> =  - \dim_K \Ext_Q^1(V_1,V_2) =  -\dim_K \Hom_Q(V_2, \tau V_1) = 0$. 

\end{proof}

\subsection{A Summary of Work by Ingalls, Paquette, and Thomas} We start by defining a rational convex polyhedral cone $H_{\delta}^{ss}$ to be the cone of non-negative rational linear combinations of regular simple dimension vectors. The generators of this cone are $\delta$, and the quasi-simple roots, which we recall are divided into finitely many (say $N$) tubes. For convenience, label the quasi-simple roots $\beta_{i,j}$, so that $\beta_{i,j+1} = \tau\beta_{i,j}$, and $i = 1,\ldots, N$ indicates which tube the root is in. Call the rank, or period, of the $i^{th}$ tube $r_i$. 

\begin{re} It turns out that $H_{\delta}^{ss} = \D(\delta)$. This follows from Lemma \ref{simples} and the fact that $\<\delta, \cdot\> = -\<\cdot, \delta\>$. \end{re}

Next, define the following cover of $H_{\delta}^{ss}$. For any multi-index $I = (a_1, \ldots, a_N)$, with $1\leq a_i \leq r_i$, $C_I$ is the cone given by non-negative rational linear combinations of $\delta$, and all quasi-simple roots {\it except} $\beta_{1,a_1}, \ldots, \beta_{N,a_N}$. Let $R$ be the set of all such multi-indices and let
$$\mathcal{J} = \{C_I\}_{I \in R} \cup \{\D(\beta)\}_{\beta},$$
where $\beta$ runs through the set of real Schur roots of $Q$. Furthermore, for any $\alpha \in \inte^{Q_0}$ define $\mathcal{J}_{\alpha} = \{J \in \mathcal{J} | \alpha \in J\}$. 

\begin{re}  \begin{enumerate} \item The above definition is structured for a Euclidean quiver. If $Q$ is Dynkin,  $\mathcal{J} = \{\D(\beta)\}_{\beta}$ where $\beta$ runs through all (real Schur) roots. 
\item Also see \cite[Proposition 5.2]{HidelaP}, in which the authors investigate a wall system for $D(\delta)$.
\end{enumerate} \end{re} 

The theorem we will recover in the tame case is as follows:

\begin{thm*} \cite[Theorem 7.4]{IngPacTho} \label{IPT} Let $Q$ be a tame quiver and $\alpha_1, \alpha_2 \in \inte^{Q_0}$. Then $\alpha_1\sim \alpha_2$ if and only if $\mathcal{J}_{\alpha_1} = \mathcal{J}_{\alpha_2}$. \end{thm*}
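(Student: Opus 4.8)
The plan is to read the statement off from Theorems \ref{thm1} and \ref{thm2}. For integer vectors $\alpha_1,\alpha_2$ the relation $\alpha_1\sim\alpha_2$ says exactly that $\rep(Q)^{ss}_{\wt(\alpha_1)}=\rep(Q)^{ss}_{\wt(\alpha_2)}$, i.e. $\alpha_1\sim_{\git}\alpha_2$, so Theorem \ref{thm1} already reduces the assertion to the equivalence: $\I_{\alpha_1}=\I_{\alpha_2}$ if and only if $\mathcal{J}_{\alpha_1}=\mathcal{J}_{\alpha_2}$. Now Theorem \ref{thm2} gives $\I=\{\C(\delta)_{\alpha_I}\}_{I\in R}\cup\{\D(\beta)\}_{\beta}$ with $\beta$ running over the real Schur roots, while by construction $\mathcal{J}=\{C_I\}_{I\in R}\cup\{\D(\beta)\}_{\beta}$ with the same second family. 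Hence it suffices to prove that $\C(\delta)_{\alpha_I}=C_I$ for every $I\in R$: once this is known, $\I$ and $\mathcal{J}$ coincide as collections of subsets of $\QQ^{Q_0}$, so $\I_\alpha=\mathcal{J}_\alpha$ for all $\alpha$, and the theorem follows.

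So the work is to establish $\C(\delta)_{\alpha_I}=C_I$, and for this I would first pin down $\rep(Q,\delta)^{ss}_{\wt(\alpha_I)}$. From the defining expression $\alpha_I=\delta+\sum_k\sum_{j\neq a_k}\beta_{k,j}$, the vanishing of $\langle\delta,-\rangle$ on regular dimension vectors, and the formula for $\langle\beta_1,\beta_2\rangle$ on quasi-simple roots, one computes $\langle\alpha_I,\beta_{k,l}\rangle$ to be $-1$ when $l=a_k$, $+1$ when $l=a_k+1$, and $0$ otherwise. Since the coefficient of $\delta$ in $\alpha_I$ is positive, Proposition \ref{prepro} together with the sign of $\langle\delta,\gamma\rangle$ forces $\langle\alpha_I,\gamma\rangle\neq 0$ for the dimension vector $\gamma$ of any indecomposable preprojective or preinjective representation; since the Jordan--H\"older factors of a $\wt(\alpha_I)$-semi-stable representation are $\wt(\alpha_I)$-stable bricks killed by $\langle\alpha_I,-\rangle$, it follows that $\rep(Q,\delta)^{ss}_{\wt(\alpha_I)}\subseteq\mathcal{R}eg$. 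Working inside $\mathcal{R}eg$, which splits as a product over the tubes, a short lattice-path argument applied to the uniserial filtrations of Proposition \ref{regsim2} then shows that the $\wt(\alpha_I)$-semi-stable representations of dimension $\delta$ are precisely the homogeneous-tube representations, together with, in each non-homogeneous tube $k$, the unique indecomposable $U_k$ whose regular socle is the regular simple of dimension $\beta_{k,a_k}$ and whose regular length is $r_k$ (so that the regular composition factors of $U_k$ are $\beta_{k,1},\dots,\beta_{k,r_k}$ and $\dv U_k=\delta$).

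With this description in hand, the inclusion $C_I\subseteq\C(\delta)_{\alpha_I}$ comes down to checking that each generator $\delta$ and $\beta_{k,j}$ (with $j\neq a_k$) of $C_I$ keeps every $U_k$ and every homogeneous representation semi-stable: the homogeneous ones are semi-stable for every weight in $\D(\delta)=H_{\delta}^{ss}$ by Proposition \ref{prepro}, and for $U_k$ one evaluates the given weight on its chain of uniserial subrepresentations and on any preprojective subrepresentation and finds all values $\le 0$. For the opposite inclusion, take $\alpha'\in\C(\delta)_{\alpha_I}$ and write $\alpha'=c\delta+\sum_k\sum_j c_{k,j}\beta_{k,j}$ with nonnegative coefficients; semi-stability of each $U_k$ against $\wt(\alpha')$ translates into the inequalities $c_{k,a_k}\le c_{k,j}$ for all $j\neq a_k$, and then the relations $\sum_j\beta_{k,j}=\delta$ allow one to absorb the $\beta_{k,a_k}$-terms and rewrite $\alpha'$ as a nonnegative combination of $\delta$ and the $\beta_{k,j}$ with $j\neq a_k$, so $\alpha'\in C_I$. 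I expect the main obstacle to be the second paragraph, namely the explicit identification of $\rep(Q,\delta)^{ss}_{\wt(\alpha_I)}$: both the reduction to $\mathcal{R}eg$ and, more delicately, the tube-by-tube combinatorics that single out the $U_k$ must be carried out carefully, and this is exactly the step where the particular shape of $\alpha_I$ is used. Everything afterwards is a bounded convexity computation together with bookkeeping via the vanishing statements in Proposition \ref{prepro}.
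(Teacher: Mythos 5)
Your proposal is correct and follows essentially the same route as the paper: combine Theorem \ref{thm1} with Theorem \ref{thm2}, the only bridge being the identification $C_I=\C(\delta)_{\alpha_I}$, which the paper establishes as Lemma \ref{big1} via exactly the orbit-cone computation you sketch (Lemmas \ref{tech}--\ref{list} and equation (\ref{CC5})). The one place to be slightly more careful is that the paper works with the $\delta$-dimensional $\wt(\alpha_I)$-\emph{polystable} representations rather than all semi-stable ones (Lemma \ref{actually} shows these are indecomposable), but your tube-by-tube analysis is the same argument in substance.
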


\subsection{Proof of Theorem 2}

Let $I$ be a multi-index as above, and let $\wt(\alpha_I)$ be the weight given by $\alpha_I = \delta +  \sum_{j \neq a_i} \sum_{i=1}^N  \beta_{i,j}$. We want to prove the following two substantial Lemmas, which together will, in large part, prove Theorem 2:

\begin{lemma} \label{big1} $C_I = \C(\delta)_{\alpha_I}$, and $\C(\delta)_{\alpha_I}$ is a maximal GIT-cone. \end{lemma}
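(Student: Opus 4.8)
The plan is to establish the equality $C_I = \C(\delta)_{\alpha_I}$ by a double inclusion, and then to verify maximality by dimension count using Proposition \ref{pure-fan}. The starting point is the observation that $\alpha_I = \delta + \sum_{i=1}^N\sum_{j\neq a_i}\beta_{i,j}$ is itself a non-negative combination of regular simple dimension vectors, hence lies in $H_\delta^{ss} = \D(\delta)$; moreover $\langle \delta, \alpha_I\rangle = 0$ since all the summands are regular, so $\alpha_I \in \D(\delta)$ and it makes sense to form $\C(\delta)_{\alpha_I}$. The cone $C_I$ is by definition generated by $\delta$ together with all $\beta_{i,j}$ with $j\neq a_i$, so both cones live inside $\D(\delta) = H_\delta^{ss}$, the cone generated by $\delta$ and all the $\beta_{i,j}$.

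First I would identify the $\wt(\alpha_I)$-stable decomposition of $\delta$. The claim is that $\delta = \delta' \pp \bigoplus(\text{real Schur roots }\beta_{i,j}, j\neq a_i)$ in the appropriate multiplicity pattern dictated by Proposition \ref{regsim}(5): in each non-homogeneous tube of rank $r_i$, removing the single quasi-simple $\beta_{i,a_i}$ leaves a chain of $r_i - 1$ quasi-simples summing (together with $\beta_{i,a_i}$) to $\delta$, so these contribute regular-simple-in-$\rep(Q)^{ss}_{\wt(\alpha_I)}$ factors. One needs to check that the regular simple representations $V_{i,j}$ with $j\neq a_i$, together with the homogeneous-tube representations of dimension $\delta$, are exactly the $\wt(\alpha_I)$-stable objects of dimension a summand of $\delta$; this uses the Euler-form computation in the Lemma preceding this subsection (pairings of quasi-simples are $1$, $-1$, or $0$) to verify $\langle \alpha_I, \beta_{i,j}\rangle = 0$ for $j\neq a_i$ and $\langle \alpha_I, \beta'\rangle < 0$ for the other subrepresentation dimension vectors — in particular $\langle\alpha_I,\beta_{i,a_i}\rangle > 0$, which is what breaks stability of the excluded simples and forces the decomposition. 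Once the $\wt(\alpha_I)$-stable decomposition of $\delta$ is in hand, the Derksen–Weyman face description gives $\C(\delta)_{\alpha_I}$ as the corresponding face $\bigcap_k \D(\beta_k)$ of $\D(\delta)$, and a direct check — each $\D(\beta_{i,j})$ being a supporting halfspace whose bounding hyperplane is $\{\langle\cdot,\beta_{i,j}\rangle = 0\}$ — identifies this face with the subcone $C_I$ generated by $\delta$ and the $\beta_{i,j}$, $j\neq a_i$. That gives $C_I = \C(\delta)_{\alpha_I}$.

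For maximality: since $\delta$ is a Schur root, $\mathcal F(\delta)$ is a pure fan of dimension $|Q_0| - 1$ by Proposition \ref{pure-fan}, and by the remark following it a GIT-cone is maximal iff it has dimension $|Q_0|-1$. So it suffices to show $\dim_K C_I = |Q_0| - 1$, equivalently that $\alpha_I \in \D^0(\delta)$ — i.e. that $\langle\alpha_I,\beta'\rangle < 0$ for every proper nonzero $\beta' \hookrightarrow \delta$. This is exactly the strict-inequality half of the stability analysis above: the generic $\delta$-dimensional representation (from a homogeneous tube) is $\wt(\alpha_I)$-stable because $\alpha_I$ pairs strictly negatively with every proper regular subrepresentation dimension vector, which one reads off from the tube combinatorics — any proper subrepresentation of an indecomposable in a non-homogeneous tube is a uniserial regular whose dimension vector is a partial sum of consecutive quasi-simples, and the $a_i$-term contributes strictly. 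I expect this strict-negativity verification to be the main obstacle: it requires carefully enumerating the subrepresentation dimension vectors $\beta' \hookrightarrow \delta$ (across all tubes, including the homogeneous ones and mixed subrepresentations) and confirming the sign of $\langle\alpha_I,\beta'\rangle$ in each case via the Euler pairings of quasi-simples and Proposition \ref{prepro}; the bookkeeping is where the real work lies, whereas the face-identification and the dimension-count-gives-maximality steps are then formal.
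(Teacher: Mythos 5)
There is a genuine gap at the heart of your argument. You propose to compute $\C(\delta)_{\alpha_I}$ as the Derksen--Weyman face of $\D(\delta)$ attached to the $\wt(\alpha_I)$-stable decomposition of $\delta$. But that description computes \emph{faces} of $\D(\delta)$, whereas $\C(\delta)_{\alpha_I}$ is a cone of the GIT-fan $\mathcal F(\delta)$, which \emph{subdivides} $\D(\delta)$: the $C_I$ are full-dimensional subcones of $\D(\delta)$, not faces of it. Concretely, $\alpha_I$ lies in the relative interior $\D^0(\delta)$ (pair $\alpha_I$ against the preprojective $\beta'\hookrightarrow\delta$ as in Lemma \ref{tech}), so the only face of $\D(\delta)$ containing $\alpha_I$ in its relative interior is $\D(\delta)$ itself, and the associated stable decomposition is the trivial one $\delta=\delta$: the generic $\delta$-dimensional representation lies in a homogeneous tube and is $\wt(\alpha_I)$-stable (Lemma \ref{list}). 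Your method therefore returns $\D(\delta)$, not the proper subcone $C_I$. The stable decomposition you posit is also incorrect on its face: a direct Euler-form computation gives $\langle\alpha_I,\beta_{i,a_i}\rangle=-1$ (not $>0$), while $\langle\alpha_I,\beta_{i,j}\rangle=1\neq 0$ when $\beta_{i,j}=\tau^{-}\beta_{i,a_i}$, so most of the quasi-simples you list as stable factors are not even $\wt(\alpha_I)$-semi-stable; and the dimension vectors $\delta+\sum_{i}\sum_{j\neq a_i}\beta_{i,j}$ sum to more than $\delta$, so they cannot be the factors of a filtration of a $\delta$-dimensional representation.

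The maximality step has a related slippage: $\dim_K \C(\delta)_{\alpha_I}=|Q_0|-1$ is \emph{not} equivalent to $\alpha_I\in\D^0(\delta)$, since a point of $\D^0(\delta)$ may lie on a wall of the GIT-fan, where its GIT-cone has codimension one. What does suffice, once the equality $C_I=\C(\delta)_{\alpha_I}$ is in hand, is that $C_I$ itself is visibly of dimension $|Q_0|-1$ (it is generated by $\delta$ together with $\sum_i(r_i-1)=|Q_0|-2$ linearly independent quasi-simples), after which maximality follows from the purity of $\mathcal F(\delta)$ (Proposition \ref{pure-fan}). The tool your proposal is missing for the equality itself is the orbit-cone description (\ref{CC5}), $\C(\delta)_{\alpha_I}=\bigcap_W\Omega(W)$ over the $\delta$-dimensional $\wt(\alpha_I)$-polystable $W$. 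The paper's route is: prove $C_I=\Omega(Z_I)$ by explicit computation along the uniserial filtrations of the $Z_i$ (Lemma \ref{lemma-Z_I}); classify all $\delta$-dimensional $\wt(\alpha_I)$-polystable representations as the $Z_i$ together with the homogeneous Schur representations (Lemmas \ref{reg} through \ref{list}); and observe that the homogeneous ones contribute only $\D(\delta)$ to the intersection (Lemma \ref{dan}). That classification is the real content of the lemma, and it is the part your argument bypasses.
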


\begin{lemma} \label{big2} If $Q$ is Euclidean, any maximal GIT-cone $\C(\beta)_{\alpha}$ (with $\beta$ Schur) is either $\D(\beta)$ or $C_I$ for some multi-index $I$, if $\beta$ is real or isotropic, respectively. \end{lemma}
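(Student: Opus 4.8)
The plan is to treat separately the two possibilities for a Schur root $\beta$ of a Euclidean quiver — $\beta$ a real Schur root, or $\beta=\delta$ — which are exhaustive: the Tits form of $Q$ is positive semidefinite with radical $\ZZ\delta$, so $q(\beta)\in\{0,1\}$ for every root and $\delta$ is the only isotropic Schur root. In both cases I would begin from the information already extracted in the proof of Proposition~\ref{pure-fan}: a GIT-cone of $\mathcal F(\beta)$ that is maximal (equivalently, of dimension $|Q_0|-1$) has the form $\C(\beta)_{\alpha_0}$ with $\alpha_0\in\D^0(\beta)$, and its relative interior is a nonempty open subset of the hyperplane $\mathbb{H}(\beta)$ which is a single GIT-class relative to $\beta$; so I may and will choose $\alpha_0$ in that relative interior.

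For the real case I would show that $\D(\beta)$ is the \emph{only} maximal GIT-cone of $\mathcal F(\beta)$. Write $E_\beta$ for the unique indecomposable of dimension $\beta$; it is exceptional, so $\Ext^1_Q(E_\beta,E_\beta)=0$ and the orbit $\GL(\beta)\cdot E_\beta$ is dense and open in $\rep(Q,\beta)$. First I would observe that for \emph{every} $\alpha'\in\D(\beta)$ one has $E_\beta\in\rep(Q,\beta)^{ss}_{\wt(\alpha')}$: the semistable locus is nonempty by definition of $\D(\beta)$, is open, and is $\GL(\beta)$-stable, hence cannot be disjoint from the dense orbit. Second, for $\alpha_0\in\D^0(\beta)$ I would show $\rep(Q,\beta)^{ss}_{\wt(\alpha_0)}$ is exactly the isomorphism class of $E_\beta$: since $\beta$ is $\wt(\alpha_0)$-stable its generic representation $E_\beta$ is $\wt(\alpha_0)$-stable, while any $\wt(\alpha_0)$-stable representation is Schur, so indecomposable, so isomorphic to $E_\beta$; thus the stable locus equals $\GL(\beta)\cdot E_\beta$, which is dense in the irreducible semistable locus and closed in it, so the attached moduli space is the single closed point $[E_\beta]$, forcing every $\wt(\alpha_0)$-semistable representation of dimension $\beta$ to be $S$-equivalent to the \emph{stable} representation $E_\beta$ and hence isomorphic to it. Combining the two observations, $\C(\beta)_{\alpha_0}=\{\alpha'\in\D(\beta):E_\beta\in\rep(Q,\beta)^{ss}_{\wt(\alpha')}\}=\D(\beta)$; since every maximal GIT-cone arises this way, $\D(\beta)$ is the unique one.

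For the isotropic case ($\beta=\delta$) I would invoke Lemma~\ref{big1}, which gives that each $C_I=\C(\delta)_{\alpha_I}$ is a maximal GIT-cone (with $\alpha_I$ in its relative interior, by the explicit description), together with the fact that the $C_I$ cover $\D(\delta)=H^{ss}_\delta$. Given an arbitrary maximal GIT-cone $\C(\delta)_{\alpha_0}$ with $\alpha_0$ in its relative interior $U$ — a nonempty open subset of $\mathbb{H}(\delta)$ — the finitely many closed, full-dimensional cones $C_I$ cover $\D(\delta)\supseteq U$ and have nowhere dense boundaries, so some $C_{I_0}$ meets $U$ in a nonempty open set; picking $\alpha'$ there places $\alpha'\in U$ and, by full-dimensionality, $\alpha'\in\rel C_{I_0}$. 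As $U$ and $\rel C_{I_0}$ are single GIT-classes represented by $\alpha_0$ and $\alpha_{I_0}$ respectively, $\rep(Q,\delta)^{ss}_{\wt(\alpha_0)}=\rep(Q,\delta)^{ss}_{\wt(\alpha')}=\rep(Q,\delta)^{ss}_{\wt(\alpha_{I_0})}$, whence $\C(\delta)_{\alpha_0}=\C(\delta)_{\alpha_{I_0}}=C_{I_0}$.

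I expect the main obstacle to be the real case, specifically ruling out strictly $\wt(\alpha_0)$-semistable representations of dimension $\beta$ for $\alpha_0\in\D^0(\beta)$, i.e. showing $\rep(Q,\beta)^{ss}_{\wt(\alpha_0)}$ is just the class of $E_\beta$. The rigidity-plus-moduli argument above handles this, but it genuinely uses that $\beta$ is a real \emph{Schur} root, so that the generic representation $E_\beta$ is simultaneously $\wt(\alpha_0)$-stable and rigid; without Schurness one could not conclude that the stable locus is a single orbit. The isotropic case is then a purely topological matching, contingent only on Lemma~\ref{big1} and the covering property of the $C_I$ (as in \cite{IngPacTho}).
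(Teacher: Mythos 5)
Your proof is correct and follows essentially the same route as the paper's: for $\beta=\delta$ you combine Lemma~\ref{big1} with the covering of $\D(\delta)$ by the $C_I$'s and the fan/maximality structure to identify an arbitrary maximal cone with some $C_{I_0}$, which is exactly the paper's argument. For $\beta$ real the paper merely asserts that maximality forces $\C(\beta)_{\alpha}=\D(\beta)$; your argument via the dense rigid orbit of $E_\beta$, the single-orbit semistable locus for $\alpha_0\in\D^0(\beta)$, and the resulting one-point moduli space is a correct filling-in of that unproved assertion.
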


The proof of these two lemmas will be delayed, as we need some auxiliary results first. We start with a technical lemma:

\begin{lemma} \label{tech} Given an indecomposable regular representation $V$, and a quasi-simple root $\beta_{i,j}$, to check that $V$ is $\wt(\beta_{i,j})$-semi-stable, it is enough to check that $\<\beta_{i,j}, \dv V\> = 0$, and $\<\beta_{i,j}, \dv V'\> \leq 0$ for all regular subrepresentations $V'$ of $V$. Furthermore, for the weight $\alpha_I = \delta + \sum_{j \neq a_i} \sum _{i=1}^N \beta_{i,j}$, to check that $V$ is $\wt(\alpha_I)$-stable, it is enough to check that $\<\alpha_I, \dv V\> = 0$ and $\<\alpha_I, \dv V' \> < 0$ for all proper, non-zero regular subrepresentations $V'$ of $V$. 
\end{lemma}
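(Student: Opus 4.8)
The plan is to reduce both assertions to King's criterion, i.e.\ to the combinatorial definition of $\wt(\theta)$-(semi-)stability, and to do so by controlling exactly those subrepresentations of $V$ that fail to be regular. The only non-formal ingredient is a structural description of all subrepresentations of a regular module.

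First I would prove the structural fact: every subrepresentation $V'\le V$ of the regular indecomposable $V$ splits as $V'=P'\oplus R'$ with $P'$ preprojective and $R'$ regular. Indeed, an indecomposable summand of $V'$ is preprojective, regular, or preinjective; it cannot be preinjective, for a preinjective summand $I_0$ would give a nonzero map $I_0\hookrightarrow V'\hookrightarrow V$ from a preinjective representation into the regular (hence non-preinjective) representation $V$, contradicting Proposition \ref{prepro}(2). Grouping the remaining summands by type yields the decomposition; note that $R'$, being a direct summand of $V'$, is itself a regular subrepresentation of $V$, and it is a proper one whenever $V'$ is. The second, purely formal, ingredient is an Euler-form computation: if $T$ is any regular representation and $P$ is preprojective, then $\langle\dv T,\dv P\rangle=\dim_K\Hom_Q(T,P)-\dim_K\Ext^1_Q(T,P)=-\dim_K\Ext^1_Q(T,P)\le 0$, since $\Hom_Q(T,P)=0$ by Proposition \ref{prepro}(1). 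In particular $\langle\beta_{i,j},\dv P\rangle\le 0$ (take $T=V_{i,j}$, the exceptional regular simple of dimension $\beta_{i,j}$) and $\langle\delta,\dv P\rangle\le 0$; moreover $\langle\delta,\dv P\rangle<0$ as soon as $P\ne 0$, because $\langle\delta,\cdot\rangle$ is additive and strictly negative on every indecomposable preprojective representation.

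Now I assemble the two statements. For the semi-stable claim, assume $\langle\beta_{i,j},\dv V\rangle=0$ and $\langle\beta_{i,j},\dv V'\rangle\le 0$ for all regular $V'\le V$; for an arbitrary $V'\le V$, write $V'=P'\oplus R'$ as above and get $\langle\beta_{i,j},\dv V'\rangle=\langle\beta_{i,j},\dv P'\rangle+\langle\beta_{i,j},\dv R'\rangle\le 0$, the first term by the Euler-form bound and the second by hypothesis, so King's criterion gives that $V$ is $\wt(\beta_{i,j})$-semi-stable. For the stable claim, assume $\langle\alpha_I,\dv V\rangle=0$ and $\langle\alpha_I,\dv V'\rangle<0$ for every proper nonzero regular $V'<V$, and expand $\langle\alpha_I,\cdot\rangle=\langle\delta,\cdot\rangle+\sum_{i}\sum_{j\ne a_i}\langle\beta_{i,j},\cdot\rangle$. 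Given a proper nonzero $V'<V$, decompose $V'=P'\oplus R'$. If $P'\ne 0$ then $\langle\alpha_I,\dv P'\rangle<0$ (the $\delta$-term is strictly negative, each $\beta_{i,j}$-term is $\le 0$), while $\langle\alpha_I,\dv R'\rangle\le 0$ since $R'$ is a proper regular subrepresentation of $V$ (equality only if $R'=0$); hence $\langle\alpha_I,\dv V'\rangle<0$. If $P'=0$ then $V'=R'$ is itself a proper nonzero regular subrepresentation, so $\langle\alpha_I,\dv V'\rangle<0$ by hypothesis. Since $V\ne 0$ and $\langle\alpha_I,\dv V\rangle=0$, King's criterion shows $V$ is $\wt(\alpha_I)$-stable.

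The substantive point is the structural claim of the second paragraph — that a subrepresentation of a regular module has no preinjective summand, and therefore splits off its regular part as a subrepresentation. I expect that to be the main (and essentially the only) obstacle; everything else is Euler-form bookkeeping resting on Proposition \ref{prepro} and the sign of $\langle\delta,\cdot\rangle$.
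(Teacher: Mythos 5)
Your proposal is correct and follows essentially the same route as the paper's proof: decompose an arbitrary subrepresentation into preprojective and regular parts, kill the $\Hom$ term in the Euler form via Proposition \ref{prepro} to get $\langle\beta_{i,j},\dv P'\rangle\le 0$, and use strict negativity of $\langle\delta,\cdot\rangle$ on nonzero preprojectives for the stability claim. The only difference is that you explicitly justify the absence of preinjective summands and spell out the mixed case $V'=P'\oplus R'$ with both parts nonzero, which the paper compresses into a ``without loss of generality'' step.
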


\begin{proof} Suppose we have indeed checked the criteria indicated, and let $Y \leq V$ be a non-regular subrepresentation. Then the indecomposable direct summands of $Y$ are either preprojective or regular. Without loss of generality, we can assume $Y$ is preprojective and indecomposable. Let $B_{i,j}$ be the unique exceptional representation of dimension vector $\beta_{i,j}$. Then:
\begin{equation} \label{proj} \<\beta_{i,j}, \dv Y\> = \dim_K\Hom_Q(B_{i,j},Y) - \dim_K \Ext_Q^1(B_{i,j},Y).\end{equation}Now since $B_{i,j}$ is not preprojective (it is regular), and $Y$ is preprojective, by Proposition \ref{prepro}, we have $\Hom_Q(B_{i,j},Y) = 0$. So, $\<\beta_{i,j}, \dv Y\> = - \dim_K \Ext_Q^1(B_{i,j},Y) \leq 0$. Hence, $V$ is $\wt(\beta_{i,j})$-semi-stable. Furthermore, 

$$\<\alpha_I, \dv Y\> = \<\delta, \dv Y\> + \sum_{j \neq a_i} \sum_{i=1}^N \<\beta_{i,j}, \dv Y\> < 0,$$ since $\< \delta, \dv Y\> <0$, for $Y$ preprojective, and $\<\beta_{i,j}, \dv Y\> \leq 0$ by equation (\ref{proj}). So $V$ is $\wt(\alpha_I)$-stable. \end{proof}

\noindent Recall that the orbit cone of any representation $V$ is defined as $$\Omega(V) = \{\alpha \in \rat^{Q_0} | V \in \rep(Q)_{\wt(\alpha)}^{ss}\}.$$ We have the following straightforward but useful lemma:

\begin{lemma} \label{directsum} If $V = \bigoplus_{i=1}^m V_i$ then $\Omega(V) = \bigcap_{i=1}^m \Omega(V_i)$. 
\end{lemma}

\begin{proof} Suppose $\alpha \in \Omega(V)$. Then we have $\<\alpha, \dv V_i\> \leq 0$ since $V_i \leq V$ and $V$ is $\wt(\alpha)$-semi-stable. On the other hand,
$$\<\alpha, \dv V\> = \sum_{i=1}^m\<\alpha, \dv V_i\> = 0,$$
so $\<\alpha, \dv V_i\> = 0$ for all $i = 1, \ldots, m$. Next, for each $i$, a subrepresentation $V'_i$ of $V_i$ is a subrepresentation of $V$ and, therefore, $\<\alpha, \dv V'_i\> \leq 0$. So, $\alpha \in \Omega(V_i)$ for each $1 \leq i \leq n$. 

Conversely, if $\alpha \in \Omega(V_i)$ for all $i = 1,\ldots, m$, we immediately have $\alpha \in \Omega(V)$ since $\rep(Q)_{\wt(\alpha)}^{ss}$ is closed under extensions.  \end{proof}

Recall now the multi-index $I = (a_1,\ldots, a_N)$, and the corresponding list \{$\beta_{i,a_i}\}_{i=1}^N$ of quasi-simple roots, one from each tube of the Auslander-Reiten quiver of $Q$. By Proposition \ref{regsim}, we know that there exists a unique $\delta$-dimensional indecomposable representation $Z_i$ of length $r_i$ and whose regular socle is of dimension $\beta_{i,a_i}$. Following the notation from Ingalls-Paquette-Thomas in \cite{IngPacTho}, define $Z_I = \bigoplus_{i=1}^N Z_i$.

\begin{lemma} (compare with \cite[Lemma 7.5]{IngPacTho}) \label{lemma-Z_I} Keeping the notation above, $C_I = \Omega(Z_I)$. 

\end{lemma}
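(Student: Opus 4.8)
The plan is to prove the two inclusions $C_I \subseteq \Omega(Z_I)$ and $\Omega(Z_I) \subseteq C_I$ separately, using the description of $C_I$ as the cone spanned by $\delta$ together with all quasi-simple roots except the $\beta_{i,a_i}$, and the description of $\Omega(Z_I)$ coming from King's criterion restricted (via Lemma \ref{tech}) to regular subrepresentations. By Lemma \ref{directsum}, $\Omega(Z_I) = \bigcap_{i=1}^N \Omega(Z_i)$, so it suffices to understand each $\Omega(Z_i)$ and show $\bigcap_i \Omega(Z_i) = C_I$.

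For the inclusion $C_I \subseteq \Omega(Z_I)$, I would first check that every generator of $C_I$ lies in $\Omega(Z_i)$ for each $i$; since $\Omega(Z_i)$ is a convex cone (the intersection of the hyperplane $\langle\,\cdot\,,\delta\rangle = 0$ with finitely many half-spaces $\langle\,\cdot\,,\dv V'\rangle \le 0$), this gives $C_I \subseteq \Omega(Z_i)$ for all $i$, hence $C_I \subseteq \Omega(Z_I)$. Concretely: by Lemma \ref{tech} it is enough to test a generator $\gamma$ (which is $\delta$ or some $\beta_{k,\ell}$ with $(k,\ell)\neq(i,a_i)$) against the regular subrepresentations of $Z_i$. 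The regular subrepresentations of the uniserial $Z_i$ are exactly the terms $W_t$ of its uniserial filtration, with $\dv W_t = \beta_{i,a_i} + \tau^{-}\beta_{i,a_i} + \cdots + \tau^{-(t-1)}\beta_{i,a_i}$, a consecutive run of quasi-simple roots in the $i$-th tube starting at $\beta_{i,a_i}$. Using the pairing computation from the Lemma immediately preceding Lemma \ref{tech} (namely $\langle \beta_1,\beta_2\rangle$ is $1$, $-1$, or $0$ according as $\beta_2 = \beta_1$, $\beta_2 = \tau\beta_1$, or otherwise), and $\langle\delta,\cdot\rangle = -\langle\cdot,\delta\rangle$ vanishing on regular dimension vectors, one checks $\langle\gamma,\dv W_t\rangle \le 0$ for every such $\gamma$ and $W_t$, and $\langle\gamma,\delta\rangle = 0$. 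The point is that telescoping the sum $\sum_{s} \langle\gamma, \tau^{-s}\beta_{i,a_i}\rangle$ over a consecutive run leaves at most one surviving $+1$ or $-1$, and the $+1$ (which would occur when $\gamma = \tau^{-(t-1)}\beta_{i,a_i}$ is the top of the run) is precisely excluded because $\gamma$ can equal $\beta_{i,a_i}$ only if the run has length $\ge 1$ but then $\beta_{i,a_i}$ is the socle, not a valid proper-subrep top... — this is the delicate bookkeeping, and it is where I expect the main obstacle to lie: one must verify that the only way to get a strictly positive pairing would require $\gamma = \beta_{i,a_i}$, which is exactly the generator we threw away.

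For the reverse inclusion $\Omega(Z_I) \subseteq C_I$, I would argue that $\Omega(Z_I)$ is contained in the cone $H_\delta^{ss} = \D(\delta)$ (since $Z_I$ is regular, hence $\wt(\delta)$-semistable, so $\delta \in \Omega(Z_I)$ forces $\Omega(Z_I) \subseteq \D(\delta)$ by the fan structure, or more directly $\langle\,\cdot\,,\delta\rangle$ vanishes on $\Omega(Z_I)$ since $\dv Z_i = \delta$), and then that any $\alpha \in \Omega(Z_I)$ which pairs $\le 0$ with all regular subrepresentations $W_t$ of each $Z_i$ must, when expressed in terms of the generators $\delta, \{\beta_{k,\ell}\}$ of $H_\delta^{ss}$, have zero coefficient on each $\beta_{i,a_i}$. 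The cleanest route is to exhibit, for each $i$, a supporting functional: the dimension vector $\dv(Z_i / (\text{regular socle})) = \tau^{-}\beta_{i,a_i} + \cdots + \tau^{-(r_i-1)}\beta_{i,a_i} = \delta - \beta_{i,a_i}$ is a regular quotient, and dually $\beta_{i,a_i} = \delta - \dv W_{r_i - 1}$ shows that the wall $\langle\,\cdot\,,\beta_{i,a_i}\rangle = 0$ is cut out by $\langle\,\cdot\,,\delta\rangle - \langle\,\cdot\,,\dv W_{r_i-1}\rangle$; since $\alpha\in\Omega(Z_i)$ gives $\langle\alpha,\delta\rangle = 0$ and $\langle\alpha,\dv W_{r_i-1}\rangle \le 0$, we get $\langle\alpha,\beta_{i,a_i}\rangle \ge 0$. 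Combined with writing $\alpha$ in the generating set and using that the $\beta_{i,a_i}$-coordinate is detected (up to sign and lower-order terms) by such a pairing, this pins the coefficient to $0$, giving $\alpha \in C_I$. I would organize the final step as a linear-algebra argument: the quasi-simple roots in a tube together with $\delta$ satisfy exactly one linear relation per tube ($\sum_j \beta_{i,j} = \delta$), so $H_\delta^{ss}$ is simplicial-like modulo these relations, and removing one generator $\beta_{i,a_i}$ per tube corresponds precisely to imposing $\langle\,\cdot\,,\beta_{i,a_i}\rangle \ge 0$ (equivalently $\le 0$ on the complementary expression) — matching the half-space description of $\Omega(Z_I)$ from Lemma \ref{tech}.

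Assembling the two inclusions yields $C_I = \Omega(Z_I)$. The main obstacle, as noted, is the telescoping sign analysis in the first inclusion — making rigorous that a consecutive run of quasi-simple roots, paired via the Euler form with another quasi-simple root or $\delta$, never produces a positive value unless the "missing" generator $\beta_{i,a_i}$ is involved — and, symmetrically, identifying the correct supporting functionals for the reverse inclusion; both reduce to the single pairing lemma for quasi-simple roots plus careful indexing of the uniserial filtration of $Z_i$.
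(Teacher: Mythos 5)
Your forward inclusion $C_I\subseteq\Omega(Z_I)$ is essentially the paper's argument: reduce to generators, invoke Lemma \ref{tech} to test only against the terms $Z_{i,t}$ of the uniserial filtration, and use the pairing lemma for quasi-simples; your ``telescoping'' is the paper's induction (its Cases 1--3), and your conclusion that the only net-positive pairing would come from $\gamma=\beta_{i,a_i}$ is correct, though your parenthetical about the top of the run is off (when $\gamma$ equals the top factor and $t\geq 2$, a $+1$ and a $-1$ both occur and cancel; the genuinely positive case is $\gamma=\beta_{i,a_i}$ paired with the socle).

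The reverse inclusion is where your proposal has a real gap. First, the expression $\alpha=c\delta+\sum c_{i,j}\beta_{i,j}$ is not unique because of the relations $\sum_j\beta_{i,j}=\delta$, so ``the coefficient on $\beta_{i,a_i}$ is zero'' is not even well-posed until you fix a normalization; the paper takes a minimal expression in which, for each tube $i$, at least one $c_{i,j}$ vanishes. Second, your proposed ``cleanest route'' is insufficient: the single inequality $\langle\alpha,\beta_{i,a_i}\rangle\geq 0$ (equivalently $\langle\alpha,\ \dv Z_{i,r_i-1}\rangle\leq 0$) does \emph{not} cut $C_I$ out of $\D(\delta)$. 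For a counterexample, take a tube of rank $3$ with quasi-simples $\beta_1,\beta_2,\beta_3$, $\beta_{j+1}=\tau\beta_j$, and $a_i=1$: the vector $\alpha=\beta_1+\beta_2\in\D(\delta)$ satisfies $\langle\alpha,\beta_1\rangle=1\geq 0$ but does not lie in the cone generated by $\delta,\beta_2,\beta_3$. You need \emph{all} of the inequalities $\langle\alpha,\dv Z_{i,t}\rangle\leq 0$, $t=1,\dots,r_i-1$; these telescope to $c_{i,a_i}\leq c_{i,j}$ for every $j$ in the $i$-th tube, exhibiting $c_{i,a_i}$ as the minimal coefficient, and only then does the normalization (some coefficient in each tube is zero, all are nonnegative) force $c_{i,a_i}=0$. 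Note also that the direction of your supporting functional is unhelpful for this purpose: $\langle\alpha,\beta_{i,a_i}\rangle\geq 0$ gives a \emph{lower} bound on $c_{i,a_i}$ relative to one neighbor, whereas what is needed is that $c_{i,a_i}$ is bounded \emph{above} by all the other coefficients in its tube.
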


\begin{proof} We will show that all generators of $C_I$, that is, $\delta$, and $\beta_{i,j}$ except $\beta_{i,a_i}$, are in $\Omega(Z_i)$ for all $i$ (see (a) and (b) below). This implies that $C_I \subseteq \Omega(Z_i)$ for all $i$. Then, by Lemma \ref{directsum}, $\Omega(Z_I) = \bigcap_{i=1}^N \Omega(Z_i)$, so $C_I \subseteq \Omega(Z_I)$. Next, of course, we will show the reverse inclusion (see (c) below). \\

\noindent (a) $\delta$ is certainly in $\Omega(Z_i)$, since each $Z_i$ is regular. \\

\noindent (b) Let $\beta\neq \delta$ be a generator for $C_I$. Since $\beta$ is quasi-simple, we have:
$$\<\beta, \dv Z_i\> = \<\beta,\delta\> = -\<\delta,\beta\> = 0,$$
for every $1 \leq i \leq N$. Now, by Lemma \ref{tech}, to show that $\beta \in \Omega(Z_i)$, it is enough to check that $\<\beta, \dv Z_i'\> \leq 0$ for any regular subrepresentation $Z_i'\leq Z_i$. Let 
$$0 = Z_{i,0} < Z_{i,1} < \cdots < Z_{i,r_i}=Z_i$$ be the uniserial  filtration for $Z_i$. We have that $\dv Z_{i,1} = \beta_{i,a_i}$ by assumption, and $Z_{i,j}$, $j = 1, \ldots, r_i-1$, is a complete list of the (proper) regular subrepresentations of $Z_i$. Recall that $\dv (Z_{i,r}/Z_{i,r-1}) = \tau^{-(r-1)}(\beta_{i,a_i})$.

We will check by induction on $r$ that $\<\beta, \dv Z_{i,r}\> \leq 0$ for all $1 \leq r \leq r_i$. Now, since  $\beta \neq \beta_{i,a_i}$, we get that:
$$\<\beta, \dv Z_{i,1}\> = \left\{ \begin{array}{lr} -1 & \mbox{ if } \beta = \tau^-(\beta_{i,a_i}) \\ 0 & \mbox{ otherwise - including $\beta$ in different tube, }\end{array} \right.$$ Assume now that $\<\beta, \dv Z_{i,s}\> \leq 0$ for all $s=1, \ldots, r-1$. We have that:
$$\<\beta,\dv Z_{i,r}\> = \<\beta, \dv Z_{i,r}/Z_{i,r-1}\> + \<\beta, \dv Z_{i,r-1}\> $$

$$=\<\beta, \dv Z_{i,r}/Z_{i,r-1}\> + \<\beta,\dv Z_{i,r-1}/Z_{i,r-2}\> + \<\beta,\dv Z_{i,r-2}\>.$$
Furthermore, $$\<\beta, \dv Z_{i,r}/Z_{i,r-1}\> = \left\{ \begin{array}{lr} 1 & \mbox{ if } \beta = \dv Z_{i,r}/Z_{i,r-1} \\ -1 & \mbox{ if } \beta = \tau^-(\dv Z_{i,r}/Z_{i,r-1}) \\ 0 & \mbox{ otherwise }\end{array} \right.$$

\noindent {\bf Case 1}: If $\beta = \dv Z_{i,r}/Z_{i,r-1}$, then $\beta = \tau^-(\dv Z_{i,r-1}/Z_{i,r-2})$, and  $\<\beta,\dv Z_{i,r-2}\> \leq 0$ by inductive assumption. So $\<\beta,\dv Z_{i,r}\> = 1 + (-1) + \<\beta,\dv Z_{i,r-2}\> \leq 0$.
\\

\noindent {\bf Case 2}: If $\beta = \tau^-(\dv Z_{i,r}/Z_{i,r-1})$, then $$\<\beta,\dv Z_{i,r}\> = \<\beta, \dv Z_{i,r}/Z_{i,r-1}\> + \<\beta, \dv Z_{i,r-1}\> = -1 +  \<\beta, \dv Z_{i,r-1}\> \leq 0$$
since $ \<\beta, \dv Z_{i,r-1}\> \leq 0$ by inductive assumption.
\\

\noindent {\bf Case 3}: Otherwise, if $\<\beta, \dv Z_{i,r}/Z_{i,r-1}\> = 0$, then $$\<\beta,\dv Z_{i,r}\> = \<\beta, \dv Z_{i,r}/Z_{i,r-1}\> + \<\beta, \dv Z_{i,r-1}\> = 0 +  \<\beta, \dv Z_{i,r-1}\> \leq 0$$
again by inductive assumption. \\

We have just proved $\<\beta, \dv Z_{i,r}\> \leq 0$ for $r = 1, \ldots, r_i$, so $Z_i$ is $\wt(\beta)$-semi-stable. That is, $\beta \in \Omega(Z_i)$ for all $i$. \\

\noindent (c) We want to show $\bigcap_{i=1}^{N} \Omega(Z_i) \subseteq C_I$. We do not know, a priori, generators for $\Omega(Z_i)$, but we know $\bigcap_{i=1}^{N} \Omega(Z_i) \subset \D(\delta)$. So, any $\alpha \in \bigcap_{i=1}^{N} \Omega(Z_i)$ can be written as

$$\alpha = c\delta + \sum_{i=1}^N\sum_{j=1}^{r_i} c_{i,j}\beta_{i,j}.$$
since the generators of $D(\delta)$ are $\delta$ plus the quasi-simple roots. We want to show that the coefficient $c_{i,a_i} = 0$ for all $1 \leq i \leq N$.

Now, there are linear dependencies among the $\beta_{i,j}$'s and $\delta$, namely $\sum_{j=1}^{r_i} \beta_{i,j} = \delta$. Thus, we may take the above expression of $\alpha$ to be minimal in the following sense: For each $i$, at least one coefficient $c_{i,j}$ must be 0. 

Now, let $$0 = Z_{i,0} < Z_{i,1} < \cdots < Z_{i,r_i}=Z_i$$ be the uniserial  filtration for $Z_i$. We have: 
$$\dv Z_{i,1} = \beta_{i,a_i} \mbox{ (by construction of } Z_i) = \beta_{i,j_1} \mbox{ (for notational convenience)}$$
$$\dv Z_{i,2}/Z_{i,1} = \tau^-\beta_{i,a_i} = \beta_{i,j_2}$$
$$\vdots$$
$$\dv Z_{i,l}/Z_{i,l-1} = \tau^{-(l-1)}\beta_{i,a_i} = \beta_{i,j_{l}}$$
$$\vdots$$
$$\dv Z_i/Z_{i,r_i-1} = \tau^{-(r_i-1)}\beta_{i,a_i} = \beta_{i,j_{r_i}}.$$
In other words, the composition factors of $Z_i$ have dimensions (starting from the socle and going up): $\beta_{i,j_1},\beta_{i,j_2}, \ldots, \beta_{i,j_{r_i}}$. 

We know that $\<\alpha, \dv Z_{i,l}\> \leq 0$, $\forall l = 1, \ldots, r_i, \forall i=1, \ldots, N$, since $\alpha \in \bigcap_{i=1}^N \Omega(Z_i)$. So we get:

$$\<\alpha, \dv Z_{i,1}\> = \<\alpha, \dv \beta_{i,j_1}\> = c_{i,j_1} - c_{i,j_2} \leq 0 \Rightarrow c_{i,j_1} \leq c_{i,j_2},$$

$$\<\alpha, \dv Z_{i,2}\> = \<\alpha, \beta_{i,j_2}\> + \<\alpha, \dv Z_{i,1}\> = c_{i,j_2} - c_{i,j_3} + c_{i,j_1}-c_{i,j_2} = c_{i,j_1}-c_{i,j_3} \Rightarrow c_{i,j_1} \leq c_{i,j_3}.$$

Similarly, 

$$\<\alpha,\dv Z_{i,l}\> = c_{i,j_1} - c_{i,j_{l+1}} \Rightarrow c_{i,j_1} \leq c_{i,j_{l+1}} \mbox{ for } l = 1, \ldots, r_i-1.$$

Now, at least one of the $c_{i,j}$'s is 0, and we just showed $c_{i,j_1}$ is minimal, so $c_{i,j_1} = c_{i,a_i} = 0$ for each $i$. Thus, $\alpha$ is a sum of generators of $C_I$, so $\alpha \in C_I$. \end{proof}

\begin{corollary} \label{stable} The $Z_i$ as defined above is $\wt(\alpha_I)$-stable for all $1 \leq i \leq N$. \end{corollary}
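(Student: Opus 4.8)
The plan is to deduce Corollary \ref{stable} directly from the machinery already in place, namely Lemma \ref{lemma-Z_I} together with Lemma \ref{tech}. Recall that in Lemma \ref{lemma-Z_I} we proved $C_I = \Omega(Z_I) = \bigcap_{i=1}^N \Omega(Z_i)$, and in particular each generator of $C_I$ lies in $\Omega(Z_i)$. Since $\alpha_I = \delta + \sum_{i=1}^N\sum_{j\neq a_i}\beta_{i,j}$ is a non-negative rational combination of generators of $C_I$ (indeed it is the sum of all of them), we have $\alpha_I \in C_I \subseteq \Omega(Z_i)$ for every $i$; hence each $Z_i$ is $\wt(\alpha_I)$-semi-stable. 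So the only thing left to upgrade is semi-stability to \emph{stability} of each $Z_i$.

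For the stability statement I would invoke the second half of Lemma \ref{tech}: to check that $Z_i$ is $\wt(\alpha_I)$-stable it suffices to check that $\<\alpha_I, \dv Z_i\> = 0$ and that $\<\alpha_I, \dv Z_i'\> < 0$ for every proper, non-zero \emph{regular} subrepresentation $Z_i' \le Z_i$. The first equality is immediate since $\alpha_I \in \D(\delta)$ and $\dv Z_i = \delta$, so $\<\alpha_I,\delta\> = \wt(\alpha_I)(\delta) = 0$. For the strict inequality, recall from the uniserial filtration $0 = Z_{i,0} < Z_{i,1} < \cdots < Z_{i,r_i} = Z_i$ that the proper non-zero regular subrepresentations of $Z_i$ are exactly $Z_{i,1}, \ldots, Z_{i,r_i-1}$, with $\dv Z_{i,1} = \beta_{i,a_i}$ and composition factors (from the socle) $\beta_{i,j_1} = \beta_{i,a_i}, \beta_{i,j_2} = \tau^-\beta_{i,a_i}, \ldots$. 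Then I would reuse the coefficient computation carried out in part (c) of the proof of Lemma \ref{lemma-Z_I}: writing $\alpha_I = \delta + \sum_{i}\sum_{j\neq a_i}\beta_{i,j}$, so that in the notation there the coefficient $c_{i,a_i}$ equals $0$ while every other $c_{i,j}$ equals $1$ (and $c = 1$), one gets $\<\alpha_I, \dv Z_{i,l}\> = c_{i,j_1} - c_{i,j_{l+1}} = 0 - c_{i,j_{l+1}}$ for $l = 1, \ldots, r_i - 1$. Since $j_{l+1} \neq a_i$ for $l = 1,\ldots,r_i-1$ (the socle is the only factor with index $a_i$), we have $c_{i,j_{l+1}} = 1$, hence $\<\alpha_I, \dv Z_{i,l}\> = -1 < 0$ for all proper non-zero regular $Z_{i,l}$. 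By Lemma \ref{tech}, $Z_i$ is therefore $\wt(\alpha_I)$-stable.

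I expect the main (minor) obstacle to be purely bookkeeping: making sure that the index $a_i$ appears only once among $j_1, \ldots, j_{r_i}$ — which is guaranteed because these are the $r_i$ distinct quasi-simple roots in the $i$-th tube, cyclically permuted by $\tau^-$ — and correctly matching the coefficients of $\alpha_I$ to the symbols $c_{i,j}$ used inside the proof of Lemma \ref{lemma-Z_I}. No new representation-theoretic input is needed beyond Lemma \ref{tech}, Lemma \ref{lemma-Z_I}, and Proposition \ref{regsim2}. A short alternative phrasing of the same argument would be to observe that $Z_i$ is $\wt(\beta_{i,j})$-semi-stable for all $j\neq a_i$ and $\wt(\delta)$-semi-stable, that exactly one of these weights (namely the one detecting the regular simple top $\tau^-\beta_{i,a_i}$, up to sign) is strictly negative on each proper regular subrepresentation, and sum up; but the direct computation via part (c) is cleanest and keeps the corollary's proof to a couple of lines.

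\begin{proof}[Proof of Corollary \ref{stable}]
Fix $1 \le i \le N$. We first note that $Z_i$ is $\wt(\alpha_I)$-semi-stable: by Lemma \ref{lemma-Z_I} we have $\alpha_I \in C_I \subseteq \Omega(Z_i)$, since $\alpha_I$ is the sum of the generators $\delta$ and $\beta_{i',j}$ (for $j \ne a_{i'}$) of $C_I$. It remains to upgrade this to stability. By Lemma \ref{tech}, it suffices to verify that $\<\alpha_I, \dv Z_i\> = 0$ and $\<\alpha_I, \dv Z_i'\> < 0$ for every proper, non-zero regular subrepresentation $Z_i' \le Z_i$. The first equality holds because $\dv Z_i = \delta$ and $\wt(\alpha_I)(\delta) = \<\alpha_I, \delta\> = 0$, as $\alpha_I \in \D(\delta)$.

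For the second, let $0 = Z_{i,0} < Z_{i,1} < \cdots < Z_{i,r_i} = Z_i$ be the uniserial filtration of $Z_i$; by Proposition \ref{regsim2} its proper non-zero regular subrepresentations are exactly $Z_{i,1}, \ldots, Z_{i,r_i-1}$, with composition factors from the socle of dimensions $\beta_{i,j_1} = \beta_{i,a_i}$, $\beta_{i,j_2} = \tau^-\beta_{i,a_i}$, $\ldots$, $\beta_{i,j_{r_i}} = \tau^{-(r_i-1)}\beta_{i,a_i}$, where $j_1, \ldots, j_{r_i}$ is a permutation of $1, \ldots, r_i$ with $j_1 = a_i$. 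Writing $\alpha_I = \delta + \sum_{i'=1}^N\sum_{j \ne a_{i'}}\beta_{i',j}$ and running the computation of part (c) in the proof of Lemma \ref{lemma-Z_I} with coefficients $c_{i,a_i} = 0$ and $c_{i,j} = 1$ for $j \ne a_i$, we obtain
$$
\<\alpha_I, \dv Z_{i,l}\> = c_{i,j_1} - c_{i,j_{l+1}} = -\,c_{i,j_{l+1}}, \qquad l = 1, \ldots, r_i - 1.
$$
Since $j_{l+1} \ne a_i$ for $l = 1, \ldots, r_i - 1$, we have $c_{i,j_{l+1}} = 1$, so $\<\alpha_I, \dv Z_{i,l}\> = -1 < 0$ for every proper non-zero regular subrepresentation $Z_{i,l}$ of $Z_i$. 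By Lemma \ref{tech}, $Z_i$ is $\wt(\alpha_I)$-stable.
\end{proof}
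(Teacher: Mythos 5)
Your proof is correct and follows essentially the same route as the paper: both reduce to checking $\<\alpha_I,\dv Z_{i,l}\><0$ on the terms of the uniserial filtration via Lemma \ref{tech}, and both compute this pairing to be $-1$; the paper does so by a short induction on $l$ using the expression of $\alpha_I$ as $\delta$ plus $\tau^{-}\beta_{i,a_i},\dots,\tau^{-(r_i-1)}\beta_{i,a_i}$ and quasi-simples from other tubes, while you obtain the same value by reusing the telescoping coefficient identity $\<\alpha,\dv Z_{i,l}\>=c_{i,j_1}-c_{i,j_{l+1}}$ from part (c) of Lemma \ref{lemma-Z_I}. That identity is indeed purely a consequence of bilinearity and the quasi-simple pairing lemma (it does not depend on $\alpha$ lying in $\bigcap_i\Omega(Z_i)$), so your reuse of it is legitimate and the argument is complete.
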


\begin{proof} We already know they are $\wt(\alpha_I)$-semi-stable. Let 
$$0 = Z_{i,0} < Z_{i,1} < \cdots < Z_{i,r_i}=Z_i$$ be the uniserial  filtration for $Z_i$. By Lemma \ref{tech}, it is enough to check that $\<\alpha_I, \dv Z_{i,r}\><0$ for all $1 \leq r <r_i$.

Recall that $\dv (Z_{i,r}/Z_{i,r-1}) = \tau^{-(r-1)}(\beta_{i,a_i})$. Also note that the weight $\alpha_I$ is 
\begin{equation} \label{tau} \alpha_I = \delta + \tau^{-}\beta_{i,a_i} + \cdot + \tau^{-(r_i-1)}(\beta_{i,a_i}) + \mbox{quasi-simples from other tubes}
\end{equation} 
We will proceed by induction on $r$ to show that $\<\alpha_I, Z_{i,r}\> = -1<0$ for all $1 \leq r <r_i$.  

When $r=1$: $$\<\alpha_I, \dv Z_{i,1}\> = \<\alpha_I, \beta_{i,a_i}\> = -1,$$ from (\ref{tau}), since $\tau^-\beta_{i,a_i}$ appears in the sum for $\alpha_I$, but $\beta_{i,a_i}$ does not.

\noindent Assume now that $\<\alpha_I, \dv Z_{i,s}\> = -1$ for $s = 1, \ldots, r-1$. Then 

$$\<\alpha_I, \dv Z_{i,r}/Z_{i,r-1}\> = \<\alpha_I, \dv Z_{i,r}\>-\<\alpha_I, \dv Z_{i,r-1}\>$$ and so $$\<\alpha_I, \tau^{-(r-1)}(\beta_{i,a_i})\> = \<\alpha_I, \dv Z_{i,r}\>+1.$$ Then using (\ref{tau}) again, since $r<r_i$, we have

$$0 = \<\alpha_I, \dv Z_{i,r}\>+1.$$ That is, $ \<\alpha_I, \dv Z_{i,r}\> = -1$. So, $Z_i$ is $\wt(\alpha_I)$-stable. \end{proof}

\begin{lemma} \label{reg} If $V$ is  an indecomposable representation of $Q$ with $\dv V < \delta$, $\<\alpha_I, \dv V\> = 0$, then $V$ is regular. \end{lemma}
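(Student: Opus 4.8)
The plan is to rule out the preprojective and preinjective cases for $V$ and then conclude regularity. Since $\dv V < \delta$ is a proper sub-dimension vector of the isotropic Schur root, $V$ cannot be $\delta$-dimensional, so by Proposition 4.2 (the Crawley-Boevey sign criterion) $V$ is preprojective, regular, or preinjective according to the sign of $\langle \delta, \dv V\rangle$. Thus it suffices to show $\langle \delta, \dv V\rangle = 0$, i.e.\ to exclude the two strict-inequality cases.

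\medskip

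\noindent\textbf{Excluding the preinjective case.} Suppose $V$ is preinjective (so $\langle \delta, \dv V\rangle > 0$). I would pair $\dv V$ against $\alpha_I = \delta + \sum_{i}\sum_{j \neq a_i}\beta_{i,j}$. Each quasi-simple $\beta_{i,j}$ is the dimension vector of an exceptional regular representation $B_{i,j}$, and $B_{i,j}$ is \emph{not} preinjective; since $V$ is preinjective, Proposition \ref{prepro}(2) gives $\Hom_Q(B_{i,j}, V) = 0$, hence
$$\langle \beta_{i,j}, \dv V\rangle = -\dim_K \Ext^1_Q(B_{i,j}, V) \leq 0.$$
Therefore $\langle \alpha_I, \dv V\rangle = \langle \delta, \dv V\rangle + \sum_{i}\sum_{j\neq a_i}\langle \beta_{i,j}, \dv V\rangle \leq \langle \delta, \dv V\rangle$, and the hypothesis $\langle\alpha_I, \dv V\rangle = 0$ would force $\langle \delta, \dv V\rangle \geq 0$; but preinjective means $\langle \delta, \dv V\rangle > 0$, which is not yet a contradiction by itself, so I need the inequality the other way. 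Instead I pair with $\delta$ directly via the dual description: writing $\langle \delta, \dv V\rangle = \dim_K \Hom_Q(W_\delta, V) - \dim_K\Ext^1_Q(W_\delta, V)$ for a regular $\delta$-dimensional $W_\delta$, and using that $V$ preinjective with $W_\delta$ not preinjective gives $\Hom_Q(W_\delta, V) = 0$, hence $\langle\delta,\dv V\rangle \leq 0$, contradicting $\langle\delta,\dv V\rangle>0$. So $V$ is not preinjective.

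\medskip

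\noindent\textbf{Excluding the preprojective case.} This is the step I expect to be the main obstacle, since the easy $\Hom/\Ext$ vanishing runs the wrong way: for $V$ preprojective and $B_{i,j}$ regular, Proposition \ref{prepro}(1) gives $\Hom_Q(V, B_{i,j}) = 0$ and $\Ext^1_Q(B_{i,j}, V) = 0$, so $\langle \beta_{i,j}, \dv V\rangle = \dim_K\Hom_Q(B_{i,j}, V) \geq 0$, while $\langle\delta,\dv V\rangle < 0$. Thus $\langle\alpha_I,\dv V\rangle = \langle\delta,\dv V\rangle + \sum \langle\beta_{i,j},\dv V\rangle$ is a sum of a negative term and nonnegative terms, and vanishing is a priori possible. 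To get the contradiction I would use $\dv V < \delta$ more seriously: since $V$ is preprojective indecomposable, $\Hom_Q(B_{i,j}, V) \neq 0$ would produce a nonzero map from a regular module into a preprojective module, which is impossible by Proposition \ref{prepro}(1) applied with the roles of $W = V$ preprojective and $V = B_{i,j}$ not preprojective — that is, $\Hom_Q(B_{i,j}, V) = 0$ after all. Hence every term $\langle\beta_{i,j},\dv V\rangle = 0$ and $\langle\alpha_I,\dv V\rangle = \langle\delta,\dv V\rangle < 0 \neq 0$, contradicting the hypothesis. So $V$ is not preprojective either, and therefore $V$ is regular. I would double-check the direction of each application of Proposition \ref{prepro} carefully, as the statement is asymmetric in $\Hom$ versus $\Ext^1$, and that asymmetry is exactly what makes the two cases behave differently; this sign-bookkeeping is the only real content, the rest being the trichotomy from Proposition 4.2.
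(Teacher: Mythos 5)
There is a genuine error: you have the $\Hom$/$\Ext^1$ vanishings of Proposition \ref{prepro} reversed, and in the preinjective case this is fatal. Proposition \ref{prepro}(2), applied with the preinjective module being $V$ and the non-preinjective one being $B_{i,j}$ (or $W_\delta$), gives $\Hom_Q(V,B_{i,j})=0$ and $\Ext_Q^1(B_{i,j},V)=0$; it says nothing about $\Hom_Q(B_{i,j},V)$, and morphisms \emph{from} a regular module \emph{to} a preinjective module are generally nonzero. So your first claim $\Hom_Q(B_{i,j},V)=0$ is false, and your fallback claim $\Hom_Q(W_\delta,V)=0$ is equally false --- if it were true it would give $\<\delta,\dv V\>\leq 0$ for every preinjective $V$, i.e.\ that preinjective modules do not exist. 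The correct route (and the paper's) uses the opposite vanishing: $\Ext_Q^1(B_{i,j},V)=0$ forces $\<\beta_{i,j},\dv V\>=\dim_K\Hom_Q(B_{i,j},V)\geq 0$ for all $i,j$, whereas $\<\alpha_I,\dv V\>=0$ together with $\<\delta,\dv V\>>0$ forces $\sum\<\beta_{i,j},\dv V\>=-\<\delta,\dv V\><0$, a contradiction.

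In the preprojective case your sign slips happen to be harmless but are still present: Proposition \ref{prepro}(1) gives $\Hom_Q(B_{i,j},V)=0$ and $\Ext_Q^1(V,B_{i,j})=0$, \emph{not} $\Ext_Q^1(B_{i,j},V)=0$; that latter group is typically nonzero, so your assertion that every term $\<\beta_{i,j},\dv V\>$ equals $0$ is unjustified. What is true is $\<\beta_{i,j},\dv V\>=-\dim_K\Ext_Q^1(B_{i,j},V)\leq 0$, which together with $\<\delta,\dv V\><0$ still yields $\<\alpha_I,\dv V\><0$, the desired contradiction. So the preprojective half survives once the bookkeeping is corrected, but the preinjective half must be redone along the lines above.
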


\begin{proof} The weight in question is 

$$\alpha_I = \delta + \sum \beta_{i,j}.$$

For this proof it is not particularly important which $\beta_{i,j}$'s are included in the sum. For our specific weight $\alpha_I$, the sum is over all $i$, and all $j \neq a_i$. 

Suppose $V$ was preinjective. Then we have $\<\delta, \dv V\>  >0$, so $ \left\< \sum \beta_{i,j}, \dv V \right\> <0$, since $\<\alpha_I, \dv V\> = 0$. Now $\beta_{i,j}$ is a quasi-simple root (in particular it is a real Schur root), so there is a unique indecomposable $B_{i,j}$ with $\dv B_{i,j} = \beta_{i,j}$. We have 

$$\sum \left( \dim_K \Hom_Q(B_{i,j}, V) - \dim_K \Ext_Q^1(B_{i,j},V)\right) < 0.$$
But $B_{i,j}$ is regular, while $V$ is preinjective, so $\dim_K \Ext_Q^1(B_{i,j}, V) = 0$ for all $i,j$ by Proposition \ref{prepro}. Thus we conclude $\sum \dim_K \Hom_Q(B_{i,j}, V) < 0$, which is a contradiction. The case that $V$ is preprojective is similar. So, $V$ must be regular. \end{proof}

\begin{lemma} \label{sametube} If $V = \bigoplus_{l=1}^mV_l$ is the decomposition of $V$ into indecomposables, $V$ is $\delta$-dimensional, and $\<\alpha_I, \dv V_l\> = 0$ for each $l$, then the $V_l$'s all appear in the same non-homogeneous tube of the Auslander-Reiten quiver of $Q$. \end{lemma}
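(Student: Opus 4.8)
I want to show that if $V=\bigoplus_{l=1}^m V_l$ is $\delta$-dimensional with each $\<\alpha_I,\dv V_l\>=0$, then all the $V_l$ live in a single non-homogeneous tube. First I would observe that each $V_l$ is regular: this follows from Lemma \ref{reg}, since $\dv V_l \leq \delta$ (indeed $\dv V_l < \delta$ unless $m=1$ and $V_l=V$ is itself $\delta$-dimensional indecomposable, in which case it is regular by the classification of regular simples, being either in a homogeneous tube or, if exceptional-generated, still regular). So $V$ itself is regular, hence $V\in\rep(Q)^{ss}_{\wt(\delta)}$ and $\sum_l \dv V_l=\delta$. The heart of the matter is then a purely combinatorial/homological argument about regular representations and the weight $\alpha_I$.

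**Main steps.** The key computation is to evaluate $\<\alpha_I,\dv V_l\>$ using the uniserial structure of each indecomposable regular $V_l$ (Proposition \ref{regsim2}). Write the regular composition factors of $V_l$, from the socle up, as $\tau^{-t}\beta_{i,s}$ for $t=0,\dots$, all in one tube $i=i(l)$ with quasi-simple "starting point" $\beta_{i,s}$. Using the Euler-form values between quasi-simples from the Lemma after Proposition \ref{regsim2} (namely $\<\beta_1,\beta_2\>$ is $1$ if equal, $-1$ if $\beta_2=\tau\beta_1$, and $0$ otherwise) together with $\<\delta,\dv V_l\>=0$, the quantity $\<\alpha_I,\dv V_l\>=\sum_{j\neq a_i}\sum_i \<\beta_{i,j},\dv V_l\>$ telescopes. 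Concretely, $\sum_{j}\<\beta_{i(l),j},\dv V_l\>$ over \emph{all} $j$ in the tube equals $\<\delta,\dv V_l\>=0$ (since the $\beta_{i,j}$ sum to $\delta$ within that tube), and $\<\beta_{i,j},\dv V_l\>=0$ for $i\neq i(l)$ since the factors are in a different tube. So $\<\alpha_I,\dv V_l\> = -\<\beta_{i(l),a_{i(l)}},\dv V_l\>$. Thus the hypothesis $\<\alpha_I,\dv V_l\>=0$ forces $\<\beta_{i(l),a_{i(l)}},\dv V_l\>=0$, which — expanding over the uniserial factors of $V_l$ and using the Euler-form table — holds precisely when neither $\beta_{i(l),a_{i(l)}}$ nor its image under $\tau^{-1}$ appears "one step off" among the composition factors, i.e. when the uniserial segment of $V_l$ does not wrap past $\beta_{i(l),a_{i(l)}}$.

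**Assembling the conclusion.** Now use $\sum_l \dv V_l=\delta$. If two summands $V_{l},V_{l'}$ lay in distinct non-homogeneous tubes $i\neq i'$, then projecting the identity $\sum \dv V_l=\delta$ onto, say, the $\<\beta_{i',a_{i'}},-\>$ functional and using that $\<\beta_{i',a_{i'}},\dv V_l\>=0$ whenever $V_l$ is in tube $i\neq i'$ (different tubes are orthogonal) while $\<\beta_{i',a_{i'}},\dv V_{l'}\>=0$ by the previous paragraph, we would get $\<\beta_{i',a_{i'}},\delta\>=0$, which is true and so not yet a contradiction — so instead I would argue via dimension count: within a fixed tube $i'$, the sum of the dimension vectors of all $V_l$ in that tube must be an integer multiple of $\delta$ restricted to that tube's support (because $\sum_{j}\beta_{i',j}=\delta$ and the segments don't wrap past $\beta_{i',a_{i'}}$, so each $\beta_{i',j}$ with $j\neq a_{i'}$ can appear with bounded multiplicity), and since the grand total is exactly $\delta$ (multiplicity one for each quasi-simple), at most one tube can receive a nonzero contribution; hence all $V_l$ lie in that one tube. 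If a $V_l$ were in a homogeneous tube, its dimension vector would be a multiple of $\delta$, again incompatible with $\sum\dv V_l=\delta$ and the presence of non-homogeneous contributions unless $V$ is a single $\delta$-dimensional indecomposable, a degenerate case to be noted separately.

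**The main obstacle.** The delicate point is the bookkeeping in the telescoping computation of $\<\alpha_I,\dv V_l\> = -\<\beta_{i(l),a_{i(l)}},\dv V_l\>$ and its consequence for which quasi-simples appear in $V_l$: one must be careful that the uniserial filtration's factors are exactly $\beta_{i,s},\tau^{-1}\beta_{i,s},\dots,\tau^{-(r-1)}\beta_{i,s}$ (Proposition \ref{regsim2}) and track indices modulo the tube rank $r_{i}$ so that "$\beta$ does not wrap past $\beta_{i,a_i}$" is stated correctly; and then the final additivity argument that $\sum_l \dv V_l=\delta$ can only be satisfied with all summands in one non-homogeneous tube needs the fact that $\delta$ has multiplicity-one expansion in quasi-simples within each tube, so that no quasi-simple $\beta_{i,j}$ is "over-counted." I expect this index-chasing, rather than any conceptual difficulty, to be where the real work lies.
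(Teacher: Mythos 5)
Your opening reduction is fine and matches the paper's first step: each $V_l$ is regular by Lemma \ref{reg}, and the telescoping identity $\<\alpha_I,\dv V_l\>=-\<\beta_{i(l),a_{i(l)}},\dv V_l\>$ (using $\sum_j\beta_{i,j}=\delta$ within a tube, $\<\delta,\dv V_l\>=0$, and orthogonality of quasi-simples from distinct tubes) is correct and is a genuinely nice observation not in the paper. The problem is the assembly, which is where the whole content of the lemma lives. Your ``dimension count'' rests on the phrase ``the grand total is exactly $\delta$ (multiplicity one for each quasi-simple),'' but the quasi-simple roots taken across all tubes are linearly \emph{dependent}: the $N$ relations $\sum_{j}\beta_{i,j}=\delta$, one per tube, mean that a vector has many expressions as a non-negative combination of quasi-simples, so ``the multiplicity of $\beta_{i,j}$ in $\delta$'' is not well defined (already in type $\widetilde{D}_4$ one has $\beta_{1,1}+\beta_{1,2}=\beta_{2,1}+\beta_{2,2}=\delta$). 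Likewise the parenthetical claim that the total contribution of a fixed tube is ``an integer multiple of $\delta$ restricted to that tube's support'' is unjustified: a proper uniserial segment $\beta_{i,s}+\tau^{-}\beta_{i,s}+\cdots+\tau^{-(r-1)}\beta_{i,s}$ with $r<r_i$ is not a multiple of $\delta$, and nothing you wrote forces the segments from one tube to assemble into one. You yourself note that the naive pairing against $\beta_{i',a_{i'}}$ gives $0=0$ and no contradiction; the substitute argument does not repair this, so the cross-tube case --- which is the entire point --- remains open.

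What is missing is a device to break the linear dependence and a pairing that actually produces a numerical contradiction. The paper does this in two moves. First, it uses an extending vertex $x_0$ with $\delta(x_0)=1$: exactly one quasi-simple per tube is nonzero at $x_0$, which pins one coefficient in the expansion $\sum_{i,j}c_{i,j}\beta_{i,j}=\delta$ to equal $1$, and then repeated pairing with $\<\beta_{1,j_0},-\>$, $\<\beta_{1,j_1},-\>,\dots$ either yields a contradiction or normalizes all coefficients in that tube to $1$ (here minimality --- at least one $c_{i,j}=0$ per tube --- disposes of the intra-tube relation). Second, assuming two tubes contribute nontrivially, it rewrites $\delta-\sum_{j\in J_1}\beta_{1,j}=\sum_{j\in J_1^{c}}\beta_{1,j}$ with both $J_1$ and $J_1^c$ nonempty, chooses $j_0\in J_1^{c}$ whose $\tau$-translate indexes an element of $J_1$, and pairs with $\<\beta_{1,j_0},-\>$: the left side is nonzero while the right side, being a combination of quasi-simples from other tubes, pairs to $0$. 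Your telescoping identity could plausibly be folded into such an argument, but as written the proposal has a genuine gap at exactly this step.
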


\begin{proof} We know each $V_l$ is regular by Lemma \ref{reg}. The dimension of each $V_l$ can be written as a sum of ($\tau$-consecutive) quasi-simples $\beta_{i,j}$, using the composition factors of the uniserial filtration for $V_l$. That is, we have $\sum_{j \in J_i} \beta_{i,j} = \dv V_l$ if $V_l$ is in the $i$th tube. Since $\sum_{l=1}^m \dv V_l = \delta$, we have: 

\begin{equation} \label{sum} \sum_{j \in J_1} c_{1,j}\beta_{1,j} + \cdots + \sum_{j \in J_N} c_{N,j}\beta_{N,j} = \delta,
\end{equation}
where $J_i$ is some subset of $\{1, \ldots, r_i\}$.

Note that we are not assuming there is one $V_l$ in each tube, we are merely combining quasi-simples from the same tube into a single sum. There may be constants, thus the $c_{i,j}$'s, if $V$ has repeated direct summands, or if different $V_l$'s share composition factors. None of the constants are 0, by construction, since we are not including any trivial summands. Note that at least one of the constants must be 1, since $\delta(0) = 1$ where $0$ is a vertex of $Q$ such that $Q\setminus \{0\}$ is a Dynkin diagram.  Exactly one quasi-simple from each tube has $\beta_{i,j}(0) = 1$, and exactly one of these must appear in the sum $(\ref{sum}$). Without loss, assume this occurs for the first tube. That is, there is a $j_0$ such that $\beta_{1,j_0}(0) = 1$, and $c_{1,j_0} =1$ in the sum $(\ref{sum})$. 

Set $J_1' = J_1\setminus \{j_0\}$, so we have 

$$\delta = \beta_{1,j_0} + \sum_{j \in J_1'} c_{1j}\beta_{1,j} = \sum_{j \in J_2} c_{2j}\beta_{2,j} + \cdots + \sum_{j \in J_N} c_{N,j}\beta_{N,j},$$

$$\delta - \beta_{1,j_0} = \sum_{j \in J_1'} c_{1,j}\beta_{1,j} = \sum_{j \in J_2} c_{2,j}\beta_{2,j} + \cdots + \sum_{j \in J_N} c_{N,j}\beta_{N,j}.$$

Using the relations among quasi-simples, 

\begin{equation} \label{sum2} \sum_{j \neq j_0} \beta_{1,j} = \sum_{j \in J_1'} c_{1,j}\beta_{1,j} + \sum_{j \in J_2} c_{2,j}\beta_{2,j} + \cdots + \sum_{j \in J_N} c_{N,j}\beta_{N,j}.
\end{equation}

Now, taking the Euler inner product of each side with $\beta_{1,j_0}$, we see
$$\<\beta_{1,j_0}, \sum_{j \neq j_0} \beta_{1,j} \> = -1.$$

On the other hand,

$$\<\beta_{1,j_0}, \sum_{j \in J_1'} c_{1,j}\beta_{1,j} + \sum_{j \in J_2} c_{2,j}\beta_{2,j} + \cdots + \sum_{j \in J_N} c_{N,j}\beta_{N,j}\> = \<\beta_{1,j_0}, \sum_{j \in J_1'} c_{1,j}\beta_{1,j} \>.$$
If $\tau\beta_{1,j_0} = \beta_{1,j_1}$ does not appear in the right hand sum, that is, if $j_1 \notin J_1'$, this inner product is 0, so we have a contradiction, and the proof is complete. If it does appear, the inner product is $-c_{1,j_1}$, so we have $c_{1,j_1} = 1$.  In that case, we return to equation $(\ref{sum2})$, add the new information, and we have

\begin{equation} \label{sum3} \sum_{j \neq j_0, j_1} \beta_{1,j} = \sum_{j \in J_1''} c_{1,j}\beta_{1,j} + \sum_{j \in J_2} c_{2,j}\beta_{2,j} + \cdots + \sum_{j \in J_N} c_{N,j}\beta_{N,j}
\end{equation}

\noindent where $J_1'' = J_1' \setminus\{j_1\}$. If $\tau \beta_{1,j_1} = \beta_{1,j_2}$ appears in the sum (that is, if $j_2 \in J_1''$), then $c_{1,j_2} = 1$ as well. Continuing in this fashion, either we arrive at a contradiction that completes the proof, or $c_{1,j} = 1$ for all $j \in J_1$. So, we have reduced to the following: 

\begin{equation} \label{sum4} \sum_{j \in J_1} \beta_{1,j} + \sum_{j \in J_2} c_{2,j}\beta_{2,j} + \cdots + \sum_{j \in J_N} c_{N,j}\beta_{N,j} = \delta.
\end{equation}

We want to prove all but one of these sums is trivial. Since we have already labeled the tubes in a manner to make the first sum non-trivial, we want to show the others are indeed trivial. Assume the $V_l$'s are not all in the same tube. That is, assume at least two of the sums in equation (\ref{sum4}) are non-trivial. We have:

$$\delta - \sum_{j \in J_1}\beta_{1,j} = \sum_{j \in J_2} c_{2j}\beta_{2,j} + \cdots + \sum_{j \in J_N} c_{N,j}\beta_{N,j} \Longleftrightarrow $$

$$\sum_{j \in J_1^c}\beta_{1,j} = \sum_{j \in J_2} c_{2,j}\beta_{2,j} + \cdots + \sum_{j \in J_N} c_{N,j}\beta_{N,j}.$$

Note that both $J_1$ and $J_1^c$ are non-empty. We claim that we can find a $\beta_{1,j_0}$ with $j_0 \in J_1^c$ such that $\tau\beta_{1,j_0} = \beta_{1,j_0'}$ where $j_0' \in J_1$. Indeed, if it were not, that is if $\beta_{1,j_0}$ with $j_0 \in J_1^c$ implied $j_0' \in J_1^c$, where $\beta_{1,j_0'} = \tau\beta_{1,j_0}$, then by applying $\tau$ successively, we would get $J_1^c = \{1,\ldots, r_1\}$, and $J_1$ empty. 

Now, we have $\< \beta_{1,j_0}, \sum_{j \in J_1^c}\beta_{1,j}\> = -1$ since $\<\beta_{1,j_0}, \tau\beta_{1,j_0}\> = -1$, while $$\< \beta_{1,j_0},\sum_{j \in J_2} c_{2,j}\beta_{2,j} + \cdots + \sum_{j \in J_N} c_{N,j}\beta_{N,j}\> = 0$$
which is a contradiction. So, it must be that all $V_l$'s are in the same tube. \end{proof}

Given a rational weight $\theta \in \QQ^{Q_0}$, a representation $W \in \rep(Q)$ is said to be {\bf $\theta$-polystable} if and only if the indecomposable direct summands of $W$ are $\theta$-stable. A key result we will use says that if $\alpha \in \D(\beta)$ then

\begin{equation} \label{CC5} \C(\beta)_{\alpha} = \bigcap_{W} \Omega(W),
\end{equation}
where the intersection is over all $\wt(\alpha)$-polystable representations $W \in \rep(Q, \beta)$ (see \cite{CC5} for a proof).

\begin{lemma} \label{actually} If $V$ is $\delta$-dimensional and $\wt(\alpha_I)$-polystable, then $V$ is actually indecomposable and thus $\wt(\alpha_I)$-stable. 
\end{lemma}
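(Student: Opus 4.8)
The plan is to argue by contradiction. Write $V = \bigoplus_{l=1}^{m} V_{l}$ as a direct sum of indecomposables; by hypothesis each $V_{l}$ is $\wt(\alpha_{I})$-stable, and since an indecomposable $\wt(\alpha_{I})$-polystable representation is automatically $\wt(\alpha_{I})$-stable, it suffices to show $m=1$. So suppose $m\geq 2$. Then each $\dv V_{l}$ is a non-zero dimension vector with $\dv V_{l} < \delta$ (as the summands sum to $\delta$), and each $V_{l}$, being $\wt(\alpha_{I})$-semi-stable, satisfies $\langle \alpha_{I}, \dv V_{l}\rangle = 0$. By Lemma \ref{reg} every $V_{l}$ is regular, and by Lemma \ref{sametube} all of them lie in a single non-homogeneous tube, say the $i$-th, of rank $r_{i}\geq 2$, with regular simples $\beta_{i,1},\dots,\beta_{i,r_{i}}$ satisfying $\beta_{i,j+1}=\tau\beta_{i,j}$ and $\sum_{j}\beta_{i,j}=\delta$.

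The computational heart of the argument is the identity $\langle \alpha_{I}, \gamma\rangle = -\langle \beta_{i,a_{i}}, \gamma\rangle$ valid for every regular dimension vector $\gamma$ supported in tube $i$: quasi-simples from other tubes are Euler-orthogonal to $\gamma$ (by the lemma computing Euler products of quasi-simple roots), $\langle\delta,\gamma\rangle=0$ since $\gamma$ is regular, and $\sum_{j=1}^{r_{i}}\langle\beta_{i,j},\gamma\rangle=\langle\delta,\gamma\rangle=0$, so only the omitted summand $-\langle\beta_{i,a_{i}},\gamma\rangle$ survives. Now use the uniserial filtration of $V_{l}$ (Proposition \ref{regsim2}): let $\beta_{i,c_{l}}$ be the dimension vector of its regular socle and $s_{l}<r_{i}$ its regular length, so that $\dv V_{l}$ is a sum of $\tau^{-}$-consecutive quasi-simples starting at $\beta_{i,c_{l}}$. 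Applying the identity to $\gamma = \dv V_{l}$ together with $\langle\alpha_{I},\dv V_{l}\rangle = 0$ gives $\langle\beta_{i,a_{i}},\dv V_{l}\rangle = 0$, which by the quasi-simple Euler products forces $c_{l}\neq a_{i}$. On the other hand, if $s_{l}\geq 2$ then the regular socle $B_{i,c_{l}}$ is a proper non-zero regular subrepresentation of $V_{l}$, so $\wt(\alpha_{I})$-stability (via Lemma \ref{tech}) would require $\langle\alpha_{I},\beta_{i,c_{l}}\rangle<0$, i.e. $\langle\beta_{i,a_{i}},\beta_{i,c_{l}}\rangle>0$, hence $c_{l}=a_{i}$ --- a contradiction. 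Therefore each $V_{l}$ is regular simple, with $\dv V_{l}=\beta_{i,c_{l}}$ and $c_{l}\neq a_{i}$.

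Finally I would derive a contradiction from the dimension count $\sum_{l=1}^{m}\beta_{i,c_{l}}=\delta=\sum_{j=1}^{r_{i}}\beta_{i,j}$ with every $c_{l}\neq a_{i}$. Pairing on the left successively with $\beta_{i,a_{i}-1},\beta_{i,a_{i}-2},\dots,\beta_{i,a_{i}-(r_{i}-1)}$ --- each Euler-orthogonal to $\delta$ --- and reading off the quasi-simple Euler products, a short induction shows that no $c_{l}$ equals any of the residues $a_{i}-1,\dots,a_{i}-(r_{i}-1)$ modulo $r_{i}$; combined with $c_{l}\neq a_{i}$ this exhausts every residue, so $m=0$, contradicting $m\geq 2$. (Equivalently: $\beta_{i,1},\dots,\beta_{i,r_{i}}$ are linearly independent, so the multiset $\{c_{l}\}$ would have to be exactly $\{1,\dots,r_{i}\}$, which contains $a_{i}$.) Hence $m=1$, $V$ is indecomposable, and therefore $\wt(\alpha_{I})$-stable.

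I expect the main obstacle to be the middle step --- determining which indecomposables in tube $i$ can be $\wt(\alpha_{I})$-stable, and in particular excluding summands of regular length $\geq 2$. The decisive point is the clash between the semi-stability equality $\langle\alpha_{I},\dv V_{l}\rangle=0$, which says the regular socle is \emph{not} $\beta_{i,a_{i}}$, and the stability inequality applied to that same socle, which says it \emph{is} $\beta_{i,a_{i}}$; carrying this out cleanly requires keeping careful track of the cyclic indexing within the tube and of the fact that the uniserial filtration ascends via $\tau^{-}$, i.e. with decreasing index.
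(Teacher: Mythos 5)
Your proof is correct and follows essentially the same route as the paper's: decompose $V$ into indecomposables, invoke Lemmas \ref{reg} and \ref{sametube} to place all summands in a single non-homogeneous tube, and then derive a contradiction by examining the regular socles of the summands. Your endgame is a more careful elaboration of the paper's one-line contradiction (the paper just picks a summand whose regular socle is $\beta_{i,j}$ with $j\neq a_i$ and notes $\langle\alpha_I,\beta_{i,j}\rangle\in\{0,1\}$, which is silent on the case where that summand is itself regular simple and the pairing is $0$); your split into the length $\geq 2$ case versus the all-quasi-simple case, finished by linear independence of the quasi-simples in a tube, closes that gap cleanly.
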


\begin{proof} Suppose by way of contradiction that a decomposable such $V$ exists. Set $V= \bigoplus_{l=1}^mV_l$ to be the decomposition into indecomposables. Since $V$ is $\wt(\alpha_I)$-polystable, each $V_l$ is $\wt(\alpha_I)$-stable. Applying Lemma \ref{reg}, each $V_l$ is regular, and applying Lemma \ref{sametube}, they all lie in the same tube. 

Each $V_l$ has a uniserial filtration, so that $\dv V_l$ is a sum of quasi-simple roots. Since \\ $ \sum_{l=1}^m \dv V_l = \delta$, at most one of the $V_l$'s may have regular socle of dimension $\beta_{i,a_i}$. Since we are assuming $V$ is decomposable, there are at least 2 direct summands, and we can choose $V_{l_o}$ so that the regular socle of $V_{l_o}$ is of dimension $\beta_{i,j}$ with $j \neq a_i$. But then $\< \alpha_I, \beta_{i,j}\> = 0$ or 1, a contradiction to $V_l$ being $\wt(\alpha_I)$-stable. \end{proof}

\begin{lemma} \label{dan} If $V$ is $\delta$-dimensional and homogeneous (i.e., regular simple), then $\Omega(V) = \D(\delta)$. \end{lemma}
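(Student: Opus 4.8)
I would prove the two inclusions $\Omega(V)\subseteq\D(\delta)$ and $\D(\delta)\subseteq\Omega(V)$ in turn. The first is immediate from the definitions: if $\alpha\in\Omega(V)$, then $V$ is a $\delta$-dimensional $\wt(\alpha)$-semi-stable representation, so $\delta$ is $\wt(\alpha)$-semi-stable and hence $\alpha\in\D(\delta)$.

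For the reverse inclusion I would exploit the cone description of $\D(\delta)$: recall that $\D(\delta)=H_\delta^{ss}$ is generated, over $\QQ_{\ge 0}$, by $\delta$ together with the quasi-simple roots $\beta_{i,j}$, so a given $\alpha\in\D(\delta)$ may be written $\alpha=c\,\delta+\sum_{i,j}c_{i,j}\beta_{i,j}$ with $c,c_{i,j}\ge 0$. To show $\alpha\in\Omega(V)$ I must check that $\<\alpha,\delta\>=0$ and that $\<\alpha,\dv V'\>\le 0$ for every subrepresentation $V'\le V$. The first is automatic, being part of King's criterion for $\alpha\in\D(\delta)$; the second I would reduce to an understanding of the subrepresentations of $V$. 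The crucial structural observation is that every \emph{proper, nonzero} subrepresentation $V'$ of $V$ is preprojective: since $V$ is simple in the abelian category of regular representations, its only regular subrepresentations are $0$ and $V$, so — using $\dv V=\delta$ and comparing dimension vectors — no indecomposable summand of a proper $V'$ can be regular; and by Proposition \ref{prepro}(2) a preinjective indecomposable admits no nonzero morphism into $V$ (which is regular, not preinjective), so no summand of $V'$ is preinjective either. Hence $V'$ is preprojective.

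With this in hand the computation is short. For a proper nonzero $V'\le V$, which is preprojective, we have $\<\delta,\dv V'\>\le 0$, and for each quasi-simple root $\beta_{i,j}$ Proposition \ref{prepro}(1) gives $\Hom_Q(B_{i,j},V')=0$ (as $B_{i,j}$ is not preprojective while $V'$ is), whence $\<\beta_{i,j},\dv V'\>=-\dim_K\Ext_Q^1(B_{i,j},V')\le 0$; since $c,c_{i,j}\ge 0$, it follows that $\<\alpha,\dv V'\>=c\,\<\delta,\dv V'\>+\sum_{i,j}c_{i,j}\,\<\beta_{i,j},\dv V'\>\le 0$. Combined with the trivial cases $V'=0$ and $V'=V$ (where $\<\alpha,\dv V\>=\<\alpha,\delta\>=0$), this shows $V$ is $\wt(\alpha)$-semi-stable, i.e. $\alpha\in\Omega(V)$. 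I expect the only real obstacle to be the structural claim that proper subrepresentations of $V$ are preprojective — in particular, ruling out a subrepresentation whose decomposition mixes a nonzero regular summand with a nonzero preprojective one, which requires the dimension-vector comparison against $\dv V=\delta$; everything after that is mechanical.
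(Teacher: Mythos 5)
Your argument is correct, but it takes a genuinely different route from the paper's. Both proofs begin the same way, reducing to the claim that every proper nonzero subrepresentation $V'$ of $V$ is preprojective (your justification of this — no preinjective summand by $\Hom_Q(\text{preinjective},V)=0$, and no regular summand because it would be a nonzero regular submodule of the regular simple $V$, forcing $V'=V$ — is terse but sound). After that the proofs diverge. The paper does \emph{not} use the generator description of $\D(\delta)$: for an indecomposable preprojective $V'\le V$ it shows that $\dv V'$ is in fact a generic subdimension vector of $\delta$ (i.e.\ $\dv V'\hookrightarrow\delta$), by observing that otherwise \cite[Theorem 2.7]{DW2} would force $\Ext_Q^1(V',V/V')\neq 0$, which contradicts Proposition \ref{prepro} since $V/V'$ has no preprojective summand; King's defining inequalities for $\D(\delta)$ then apply directly to $\dv V'$. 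You instead write $\alpha=c\delta+\sum c_{i,j}\beta_{i,j}$ with nonnegative coefficients (legitimate, via $\D(\delta)=H_\delta^{ss}$ from Lemma \ref{simples}) and verify $\<\alpha,\dv V'\>\le 0$ generator by generator using the $\Hom$/$\Ext$ vanishing of Proposition \ref{prepro} — essentially the technique of Lemma \ref{tech}. Your route is more elementary in that it avoids the generic-representation machinery of Derksen–Weyman/Schofield, at the cost of relying on the explicit generator description of $\D(\delta)$; the paper's route shows the slightly stronger fact that every subdimension vector of $V$ is a generic subdimension vector of $\delta$, so that the defining half-spaces of $\D(\delta)$ already account for all subrepresentations of $V$. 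Either argument is acceptable here.
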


\begin{proof} Certainly $\Omega(V) \subseteq \D(\delta)$. Suppose $\alpha \in \D(\delta)$, and let $0 \neq V' <V$. Since $V$ is regular simple, it has no proper regular subrepresentations. So $V'$ must be preprojective. Without loss, assume $V'$ is indecomposable. 

If $\dv V' \into \delta$, then $\< \alpha, \dv V'\> \leq 0$ since $\alpha \in \D(\beta)$, so $\alpha \in \Omega(V)$. If not, then by \cite[Theorem 2.7]{DW2} we know $\Ext_Q^{1}(V', V/V') \neq 0$. Let $V/V' = \bigoplus_{i=1}^m V_i$ be the decomposition of $V/V'$ into indecomposable representations. Since $V$ is regular, by Proposition \ref{prepro} $V/V'$ has no preprojective direct summand. So, by our assumption we have $\bigoplus_{i=1}^m \Ext_Q^1(V', V_i) \neq 0$. But, $V'$ is preprojective and $V_i$ is not, so $\Ext_Q^1(V', V_i) =0$ by Proposition \ref{prepro}, which is a contradiction. \end{proof}

Recall that $Z_i$ is the unique $\delta$-dimensional indecomposable representation of $Q$ with regular socle of dimension $\beta_{i,a_i}$. 

\begin{lemma} \label{list} The list of $\delta$-dimensional, $\wt(\alpha_I)$-polystable representations is exactly: $Z_i$ for each $i = 1, \ldots, N$, together with the homogeneous Schur representations of $Q$. 
\end{lemma}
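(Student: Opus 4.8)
The plan is to prove Lemma~\ref{list} by combining the structural results already established. Concretely, we must show two inclusions: first, that each $Z_i$ and each homogeneous Schur representation of dimension $\delta$ is $\wt(\alpha_I)$-polystable; second, that any $\delta$-dimensional $\wt(\alpha_I)$-polystable representation is one of these.

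For the first inclusion, the representations $Z_i$ are $\wt(\alpha_I)$-stable by Corollary~\ref{stable} (hence in particular $\wt(\alpha_I)$-polystable, being indecomposable). For a homogeneous Schur representation $V$ of dimension $\delta$, Lemma~\ref{dan} gives $\Omega(V) = \D(\delta)$; since $\alpha_I$ is a non-negative combination of $\delta$ and quasi-simple roots, $\alpha_I \in H_\delta^{ss} = \D(\delta) = \Omega(V)$, so $V$ is $\wt(\alpha_I)$-semi-stable, and being Schur with $\End_Q(V) \cong K$ it is $\wt(\alpha_I)$-stable, hence polystable. (One should double-check that $\wt(\alpha_I)$-semi-stable plus Schur forces $\wt(\alpha_I)$-stable here; this follows because a proper subrepresentation with $\wt(\alpha_I)$-weight $0$ together with its complement would split off a proper summand after passing to the semi-stable category, contradicting $\End_Q(V) \cong K$ — alternatively one invokes that $V$ is $\wt(\delta)$-stable and $\alpha_I$ lies in the relative interior direction along which $V$ stays stable.)

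For the second, more substantial inclusion, let $V$ be $\delta$-dimensional and $\wt(\alpha_I)$-polystable, and write $V = \bigoplus_{l=1}^m V_l$ into indecomposables, each of which is $\wt(\alpha_I)$-stable. By Lemma~\ref{actually}, $V$ is in fact indecomposable, so $m = 1$ and $V = V_1$ is $\wt(\alpha_I)$-stable; in particular $\langle \alpha_I, \dv V\rangle = 0$ with $\dv V = \delta$. Now either $V$ is homogeneous (regular simple of dimension $\delta$), in which case we are in the second family and done, or $V$ is regular but not regular simple. In the latter case $V$ has a uniserial filtration whose composition factors are $\tau$-consecutive quasi-simples $\beta_{i,j_1}, \dots, \beta_{i,j_{r_i}}$ in a single non-homogeneous tube $i$ (using Proposition~\ref{regsim2} together with Lemma~\ref{sametube} applied to the one-summand decomposition, or more directly Lemma~\ref{reg}). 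The socle $\dv V_{i,1} = \beta_{i,j_1}$ must satisfy $\langle \alpha_I, \beta_{i,j_1}\rangle < 0$ by $\wt(\alpha_I)$-stability and Lemma~\ref{tech}; but for a quasi-simple $\beta$, the value $\langle \alpha_I, \beta\rangle$ equals $0$ or $1$ unless $\beta = \beta_{i,a_i}$ (in which case it is $-1$, since $\tau^-\beta_{i,a_i}$ appears in $\alpha_I$ but $\beta_{i,a_i}$ does not). Hence $\beta_{i,j_1} = \beta_{i,a_i}$, i.e. the regular socle of $V$ has dimension $\beta_{i,a_i}$. Since $V$ is $\delta$-dimensional with regular socle $\beta_{i,a_i}$, it has regular length exactly $r_i$, and by the uniqueness in Proposition~\ref{regsim2} we conclude $V \cong Z_i$.

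The main obstacle I anticipate is the clean bookkeeping in the non-homogeneous case: one must be careful that "regular length $r_i$ with socle $\beta_{i,a_i}$" is forced (not merely allowed) by $\dv V = \delta$, which uses Proposition~\ref{regsim}(5) that the quasi-simples in a period-$r_i$ orbit sum to $\delta$ and that these $\beta_{i,j}$ together with $\delta$ have only the single linear relation $\sum_j \beta_{i,j} = \delta$ within that tube; and that $\langle \alpha_I, \beta_{i,j}\rangle$ is computed correctly for each quasi-simple using the Lemma relating $\langle \cdot,\cdot\rangle$ on quasi-simples to the $\tau$-structure. The rest is assembling Corollaries~\ref{stable}, Lemmas~\ref{reg}, \ref{sametube}, \ref{actually}, and \ref{dan}, all of which are available.
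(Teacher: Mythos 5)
Your overall skeleton agrees with the paper's proof: Lemma \ref{actually} reduces the problem to indecomposable $\wt(\alpha_I)$-stable representations, Corollary \ref{stable} supplies the $Z_i$, and the socle computation ($\<\alpha_I,\beta_{i,j}\>\in\{0,1\}$ for $j\neq a_i$ while $\<\alpha_I,\beta_{i,a_i}\>=-1$) rules out every other indecomposable in a non-homogeneous tube and forces $V\cong Z_i$ by the uniqueness in Proposition \ref{regsim2}. All of that is correct and is essentially the paper's argument.

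The gap is in the homogeneous case. From Lemma \ref{dan} you only get that a homogeneous $\delta$-dimensional $V$ is $\wt(\alpha_I)$-\emph{semi}-stable, and you upgrade to stability by asserting that ``Schur $+$ semi-stable $\Rightarrow$ stable.'' That implication is false in general: a non-split extension $0\to V_1\to E\to V_2\to 0$ of two non-isomorphic $\theta$-stable representations of weight zero has $\End_Q(E)\cong K$ but is only $\theta$-semi-stable. This happens already for tame quivers --- the regular indecomposable of length two in a non-homogeneous tube is Schur and $\wt(\delta)$-semi-stable but not $\wt(\delta)$-stable. Your first justification fails for exactly this reason: a proper subrepresentation of weight zero gives a non-trivial Jordan--H\"older filtration in $\rep(Q)^{ss}_{\wt(\alpha_I)}$, not a direct summand, so no contradiction with $\End_Q(V)\cong K$ arises. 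Your second justification (that $\alpha_I$ lies in a ``relative interior direction'' along which $V$ stays stable) is not substantiated and is not obviously true, since $\alpha_I$ sits on walls of $\D(\delta)$. The repair is immediate and is what the paper does: a homogeneous $V$ of dimension $\delta$ is regular simple, hence has no proper non-zero regular subrepresentations, so by Lemma \ref{tech} the stability condition is vacuous once $\<\alpha_I,\delta\>=0$ is noted; Lemma \ref{dan} is not needed here at all.
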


\begin{proof} First of all, we know from Lemma \ref{actually} that any $\delta$-dimensional, $\wt(\alpha_I)$-polystable representation is in fact regular indecomposable, thus $\wt(\alpha_I)$-stable. 

Next, the $Z_i$'s are $\wt(\alpha_I)$-stable by Corollary \ref{stable}. On the other hand, if $V$ is any other $\delta$-dimensional indecomposable from a non-homogeneous tube, thus not isomorphic to one of the $Z_i$'s, then the regular socle of $V$ is of dimension $\beta_{i,j}$ for some $j \neq a_i$ and $1 \leq i \leq N$. So $V$ has a subrepresentation $V'$ with $\<\alpha_I, \dv V'\> = \<\alpha_I, \beta_{i,j}\> = 0$ or $1$, so $V$ is not $\wt(\alpha_I)$-stable. 

Finally, let $Z$ be an indecomposable representation of dimension $\delta$ lying in a homogeneous tube. Then $Z$ has no regular subrepresentations since it is regular simple. So, there is nothing to check to verify that $\<\alpha_I, Z'\> <0$ for all regular subrepresentations. Hence, $Z$ is $\wt(\alpha_I)$-stable by Lemma \ref{tech}. 
\end{proof}

We are now ready to prove our results from the beginning of this section. 

\begin{proof}[Proof of Lemma \ref{big1}] 
We have shown that $C_I = \Omega(Z_I)$ in Lemma \ref{lemma-Z_I}. Next, using Lemma \ref{list} and equation (\ref{CC5}), we get that:
$$\Omega(Z_I) = \bigcap_{i=1}^N \Omega(Z_i) = \left(\bigcap_{i=1}^N\Omega(Z_i)\right) \cap D(\delta) = \bigcap_{\begin{array}{c}{\tiny X \mbox{ is }\delta\mbox{-dimensional }} \\{\tiny \wt(\alpha_I)\mbox{-polystable}}\end{array}} \Omega(X)=\C(\delta)_{\alpha_I}.$$ 

Lastly, $C_I$ is of maximal dimension, so $\C(\delta)_{\alpha_I}$ is of maximal dimension as well. 
\end{proof}

\begin{proof}[Proof of Lemma \ref{big2}] If $\beta$ is real, then $\C(\beta)_{\alpha}$ maximal implies $\C(\beta)_{\alpha} = \D(\beta)$. If $\beta$ is not real, then it must be equal to $\delta$, since it is Schur and $Q$ is Euclidean. Note that $\alpha \in \C(\delta)_{\alpha} \subset \D(\delta)$. The $C_I$'s cover $\D(\delta)$ as $I$ varies, thus $\alpha \in C_I$ for some $I$. We already know $C_I = \C(\delta)_{\alpha_I}$ for the weight $\alpha_I$, which is a maximal GIT-cone. So, we have $\alpha \in (\relint \C(\delta)_{\alpha}) \cap \C(\delta)_{\alpha_I}$ where both cones are maximal. Thus, $\C(\delta)_{\alpha}$ and $\C(\delta)_{\alpha_I}$ must be equal.  
\end{proof} 

We are now ready to prove Theorem \ref{thm2}, which together with Lemmas \ref{big1} and \ref{big2} says that $\mathcal{J} = \mathcal{I}$. 

\begin{proof}[Proof of Theorem 2] From Lemmas \ref{big1} and \ref{big2} we get the desired description of the set $\mathcal{I}$. 

In what follows, we show that $\mathcal{I}=\mathcal{J}$. Suppose $J \in \mathcal{J}$. Then $J = C_I$ for some multi-index $I$, or $J = \D(\beta)$ for some real Schur root $\beta$. If $J = C_I$, then $J = \Omega(Z_I) = \C(\delta)_{\alpha_I} \in \C(\delta)$, and thus $\C(\delta)_{\alpha_I} \in \mathcal{I}$. If $J = \D(\beta)$, for a real Schur root $\beta$, then $J \in \mathcal{I}$. So $\mathcal{J} \subseteq \mathcal{L}$. 

Conversely, suppose $\C \in \mathcal{I}$. Then $\C = \C(\beta)_{\alpha}$ for some Schur root $\beta$ and some weight $\wt(\alpha)$. Since $Q$ is Euclidean, $\beta$ is either real, or $\beta = \delta$. If $\beta$ is real, $\C=\C(\beta)_{\alpha}$ maximal implies that $\C = \D(\beta) \in \mathcal{J}$. If $\beta = \delta$, $\C(\delta)_{\alpha}$ maximal implies $\C = C_I$ for some $I$ by Lemma \ref{big2}. So, $\mathcal{I} \subseteq \mathcal{J}$. The proof now follows.
\end{proof}


\end{document}